\theoremstyle{plain}
\newtheorem{thm}{Theorem}[section]
\newtheorem{lem}[thm]{Lemma}
\newtheorem{prop}[thm]{Proposition}
\newtheorem*{thm*}{Theorem}
\newtheorem{sublem}[equation]{Lemma}
\newtheorem{subcor}[equation]{Corollary}
\newtheorem{subprop}[equation]{Proposition}
\theoremstyle{definition}
\newtheorem{defn}[thm]{Definition}
\newtheorem{cosa}[thm]{}
\theoremstyle{remark}
\newtheorem{subrem}[equation]{Remark}
\newtheorem{subrems}[equation]{Remarks}
\newtheorem{subrems*}{Remarks.}
\numberwithin{equation}{thm}
\newcommand{\D}{\boldsymbol{\mathsf{D}}}
\newcommand{\LL}{\mathsf L}
\newcommand{\R}{\mathsf R}
\newcommand{\sE}{\mathscr{E}}
\newcommand{\1}{\mathbf{1}}
 \newcommand{\Rf}{\R f^{}_{\<\<*}}
\newcommand{\fst}{{f^{}_{\<\<*}}}
\newcommand{\ush}[1]{{#1^{\textup{\texttt\#}}}}
\newcommand{\qc}{\mathsf{qc}}
\newcommand{\Dqc}{\D_{\mathsf{qc}}}
\newcommand\Dqcpl{\D_\qc^{\lift.95,\text{\cmt\char'053},}}
\font\cmt=cmtex10
\newcommand{\CH}{\mathcal H}
\newcommand{\CI}{\mathcal I}
\newcommand{\CO}{\mathcal O}
\newcommand{\CT}{\mathcal T}
\newcommand{\bpic}{\begin{tikzpicture}}
\newcommand{\epic}{\end{tikzpicture}}
\newcommand{\Otimes}[1]{\otimes^\LL_{#1}}
\newcommand{\sHom}{\CH om}
\newcommand{\Hqc}[1]{\sHom_{#1}^{\<\qc}}
\newcommand{\set}{\!:=}
\newcommand{\sX}{{\<\<X}}
\newcommand{\sst}{\scriptstyle}
\newcommand{\sss}{\scriptscriptstyle}
\newcommand{\smallcirc}{{\>\>\lift1,\sst{\circ},\,}}
\newcommand{\<}{\mkern-1mu}
\renewcommand{\>}{\mkern1mu}
\newcommand{\va}[1]{\vspace{#1pt}}
\newcommand{\kf}{\kern.5pt}
\def\lift#1,#2,{\vbox to 0pt{\vskip-#1 ex\hbox{$\scriptstyle #2$}\vss}}
\newcommand{\OX}{\mathcal O_{\<\<X}}
\newcommand{\OY}{\mathcal O_Y}
\newcommand{\OZ}{\mathcal O_{\<Z}}
\newcommand{\OW}{\mathcal O_W}
\newcommand{\OU}{\mathcal O_U}
\newcommand{\fc}[1]{{\mathsf{C}}^{}_{#1}}
\newcommand{\fundamentalclassa}[1]{{\boldsymbol{\mathsf{a}}}_{#1}}
\newcommand{\fundamentalclassb}[1]{{\boldsymbol{\mathsf{b}}}_{#1}}
\newcommand{\circled}[1]{\textcircled{\scriptsize{#1}}}
\newcommand{\lto}{\longrightarrow}
\newcommand{\xto}{\xrightarrow}
\newcommand\iso{{\mkern8mu\longrightarrow \mkern-25.5mu{}^\sim\mkern17mu}}
\newcommand\osi{{\mkern8mu\longleftarrow \mkern-25.5mu{}^{\>\sim}\mkern17mu}}
\DeclareMathOperator{\spec}{Spec}
\DeclareMathOperator{\stor}{\CT\!\mathit{or}}
\DeclareMathOperator{\id}{id}
\DeclareMathOperator{\via}{{\textup{via}}}
\DeclareMathOperator{\ev}{ev}
\def\Iso{\vbox to 0pt{\vss\hbox{$\widetilde{\phantom{nn}}$}\vskip-7pt}}
\def\cA #1; #2;{\cite[p.\,#1, #2]{A}}
\def\cT #1; #2;{\cite[p.\,#1, #2]{T}}
\def\lift#1,#2,{\vbox to 0pt{\vskip-#1 ex\hbox{$\scriptstyle #2$}\vss}}
\def\drlm#1{\underset{\vtop{\vskip-4.2pt\hbox to 14pt{\rightarrowfill} \vskip-10pt\hbox{$\scriptstyle \ #1$}}}\to\lim\,}
\def\dirlm#1{\lim\hskip-1.65em\lower1.37ex
       \hbox{\smash[b]{$
                   \underset{\lift 1.37,
                                         {\hbox to 0pt{\hss$\scriptscriptstyle#1$\hss}},
                                  }
                     {\:\hbox to 1.37em {\rightarrowfill}}
               $} }                      
     \!\<}
\begin{document}

\title[Fundamental class of an essentially smooth scheme\kf-map]{On the fundamental class of an\\
essentially smooth scheme\kf-map}

\author[J.\,Lipman]{Joseph Lipman} 
\address{142 Ranch Ln., Santa Barbara CA 93111, U.S.A.}
\email{jlipman@purdue.edu}

\author[A.\,Neeman]{Amnon Neeman} 
\address{Centre for Mathematics and its Applications, Mathematical Sciences\linebreak  Institute,
Australian National University, Canberra, ACT 0200, Australia.}
\email{Amnon.Neeman@anu.edu.au}

\thanks{This article is based on work supported by the National Science Foundation under Grant No. 0932078000, while the authors were in residence,  during the Spring semester of 2013, at the Mathematical Sciences Research Institute in Berkeley, California.}
  
\keywords{Grothendieck duality, 
relative dualizing complex,
fundamental class.}

\subjclass[2010]{Primary: 14F05, 14F10. Secondary:  13D03}


\begin{abstract}   Let $f\colon X\to Z$ be a separated essentially-finite\kf-type flat map of noetherian schemes,  and $\delta\colon X\to X\times_{\<Z}X$ the diagonal map. 
The \emph{fundamental class} $\fc{\<f}$ (globalizing  residues)  is a 
map~from the relative Hochschild functor 
$\LL\delta^*\<\delta_{\<*}f^*$ to the relative dualizing functor~$f^!\<$.  A~compatibility between this 
$\fc{\<f}$ and derived tensor product is shown.
The main result is that, in a suitable sense,  $\fc{\<f}$ generalizes Verdier's classical isomorphism  for smooth $f\<$ with fibers of dimension~$d$,  an isomorphism 
that binds $f^!$ to relative $d$-forms. 

\end{abstract}

\maketitle

\section*{Introduction}

\stepcounter{section}
 
\begin{cosa}\label{motive} (Underlying duality theory.)
For a scheme $X\<$, $\D(X)$ is the derived category of the abelian category of $\OX$-modules;
and $\Dqc(X)\subset\D(X)$  (resp.~$\Dqcpl(X)\subset\D(X)$) is the full subcategory spanned by the complexes $C$ such that the cohomology modules $H^i(C)$ are all quasi-coherent 
(resp.~are all quasi-coherent, \emph{and} vanish for all but finitely many $i<0$).

Grothendieck duality is concerned with a pseudofunctor~$(-)^!$ over the category $\sE$ of essentially-finite\kf-type separated maps of noetherian schemes, taking values in 
$\Dqcpl(-)$. This pseudofunctor is uniquely determined up to isomorphism by the following three properties:\vspace{1pt}

(i) For formally \'etale $\sE$-maps $f$,  $f^!$ is the usual restriction functor~$f^*\<$.

(ii) (Duality) If $f$ is a proper map of noetherian schemes then $f^!$ is right-adjoint to $\Rf$.

(iii) Suppose there is given a fiber square in $\sE$
\begin{equation}\label{fiber}
\CD
X'@>v>> X\\
@VgV\mkern 20mu V @VVfV \\
Z' @>\lift4.8,\displaystyle\clubsuit,>u> Z
\endCD
\end{equation}
with $f$ (hence $g$) proper and $u$ (hence $v$) formally \'etale.
Then  the functorial \emph{base-change map}
\begin{equation}\label{beta}
\beta_{\clubsuit}(F\>)\colon:v^*\<\<f^!\<\<F\to g^!\<u^*\<\<F\qquad\big(F\in\Dqcpl(Z)\big),
\end{equation}
defined to be  adjoint to the natural composition
$$
\R g_* v^*\mkern-2mu f^!\<F{\iso}
u^*\Rf f^!\<F
\longrightarrow u^*\<\<F\\[1.5pt]
$$
(see \eqref{bch} below), is identical with
the natural composite isomorphism
\begin{equation}\label{bciso}
v^*\<\<f^!\<F= v^!\<f^!\<F
\iso (fv)^!\<F=(ug)^!\<F\iso
g^!\<u^!\<F  =g^!\<u^*F.
\end{equation}

(N.B. The composite isomorphism \eqref{bciso} exists for \emph{any} $\sE$-maps
$f\<,g, u, v$ with $u$ and $v$ \'etale and $fv=ug$.)\va1

The case of finite\kf-type maps  is treated in \cite[Theorem 4.8.1]{li}, from the category-theoretic  viewpoint of Verdier and Deligne.%
\footnote{The frequent references in this paper to \cite{li} are due much more to 
the approach and convenience of that source than to its originality.}
 An extension to essentially-finite\kf-type maps is given
in \cite[\S5.2]{Nk}. The pseudo\-functor~$(-)^!$ expands so as to take values in $\Dqc(-)$ if one restricts
to proper maps or to \mbox{$\sE$\kf-maps} of finite flat dimension \cite[\S\S5.7--5.9]{AJL1}---and even without such restrictions if one changes `pseudo\-functor'\va1 to `oplax functor,'\va{1} i.e., one allows that for an $\sE$\kf-diagram  
$W\overset{\lift .7,g\>,}\lto X\overset{\lift 1.15,f\>,}\lto Z $\va{.5} the associated map $(fg)^!\to g^!f^!$ need not be an isomorphism, see \cite{Nm14}. For flat $\sE$-maps---the maps with which we shall be mainly concerned---the agreement of the oplax $(-)^!$  with the preceding pseudofunctor results from 
\cite[Proposition 13.11]{Nm14}.
\end{cosa}

\begin{cosa}
(Verdier's isomorphism, fundamental class and the ``Ideal Theorem.") In~the original working out  (\cite{RD}, amended in  
\cite{C1},  \cite{C2}) 
of the duality theory conceived by Grothendieck and Verdier, the main result, Corollary~3.4 on~p.\,383, is roughly that in the presence of residual complexes,%
\footnote{The theory of residual complexes presented in \cite[Chapters 6 and 7\kf]{RD} is considerably generalized in \cite{LNS}.} 
when confined to \mbox{finite\kf-type} scheme-maps and complexes with coherent homology the pseudo\-functor $(-)^!$ is obtained by pasting together concrete realizations of its restriction\- to smooth maps and to finite maps. This is a special case of the ``Ideal Theorem" in \cite[p.\,6]{RD}.\looseness=-1

For \emph{finite} $f\colon X\to Z$, the canonical concrete realization is induced by the usual sheafified duality isomorphism (see \eqref{sheaf dual}):\va2
\begin{equation}\label{d_f}
\R\fst f^!\<F = \R\fst \R\sHom(\OX, f^!\<F\>)\iso\R\sHom(\fst\OX, F\>)\quad(F\in\Dqcpl(Z)).
\end{equation}

For \emph{formally smooth} $f\colon X\to Z$ in $\sE$,  of relative dimension $d$ (section \ref{various}),
a canonical concrete realization is given by an isomorphism
\begin{equation}\label{e_f}
f^!\<F\iso \Omega^{\>d}_f[d\>]\otimes_\sX \<f^*\<\<F\qquad(F\in\Dqcpl(Z)),
\end{equation}
where $\Omega^{\>d}_f$ is the sheaf of relative $d$\kf-forms and ``$[d\>]$" denotes $d$\kf-fold translation (shift) in $\D(X)$.

\pagebreak[3]
The initial avatar of such an isomorphism uses a trace map for residual comp\-lexes (that are assumed to exist). 
In particular, when $f$ is proper there results an 
explicit (but somewhat complicated) description of Serre\kf-Grothendieck duality.  
(See e.g., \cite[\S3.4]{C1}.)

A definition of an isomorphism  with the same source and target as~\eqref{e_f}, but not requiring residual complexes, was given by Verdier in \cite[proof of Theorem 3]{V}. We review and expand upon this classical isomorphism in~\S\ref{V}. In particular, Proposition~\ref{v and otimes} explicates compatibility of the isomorphism with derived tensor product.\va1

For any \emph{flat} $\sE$\kf-map $f\colon X\to Z$, let $\delta=\delta_f\colon X\to X\times_ZX$ be the diagonal of $f$. There is a $\Dqcpl$-map \va{-6}
\[
\fc{\<f}\colon\delta^*\<\<\delta_*f^*\to f^!\<,
\]
the \emph{fundamental class of} $f\<$, from the relative Hochschild functor to the relative dualizing functor, see Definition~\ref{def-fc}.

The fundamental class is compatible with derived tensor product, see Proposition~\ref{c and tensor}.

When the flat map~$f$ is \emph{equidimensional of relative dimension}~$d$, applying the homology functor $H^{-d}$ to $\fc{\<f}$
leads to a map $c^{}_{\<f}$ from the target of the map~\eqref{e_f}  to the source
(Section~\ref{equidim} below.) 
Propositon 2.4.2 in \cite{AJL2} asserts that if, moreover, $f$~is \emph{formally smooth,} then $c^{}_{\<f}$ is an isomorphism. The proof uses Verdier's isomorphism, and of course begs the question of
whether that isomorphism is inverse to $c^{}_{\<f}$.\va2

\emph{That this does hold is our main result,}\va2 Theorem~\ref{v and c}.
\looseness=-1

\vskip2pt
Thus,  the map $c^{}_{\<f}$ extends the inverse of Verdier's isomorphism, from the class of formally smooth $\sE$\kf-maps to the class of arbitrary flat equidimensional $\sE$\kf-maps.

\begin{subrems}
(a) The above discussion has been limited, for simplicity, to~$\Dqcpl$; but the results will be established for $\Dqc\>$.\va1

(b) The isomorphisms in play are not quite canonical: there are choices involved that affect 
the resulting homology maps up to sign.  For example, for a scheme~$Y$ and an $\OY$-ideal~$\CI$, there are two natural identifications of $\CI/\CI^2$ with $\stor^{\OY}_1\<\<(\OY/\CI,\> \OY\</\CI\>)$, one the negative of the other; and we will have to choose one of them (see \eqref{I/I^2}\:\emph{ff}.) We will also have to assign different roles to the two projections of $X\times_Z X$ to $X$, necessitating another arbitrary choice. See also \cite[\S7.1]{Sa}. 

Our choices minimize sign considerations, under the constraint of respect for the usual triangulated structure
on the derived\- sheaf\kf-hom functor (see the remarks following equations \eqref{Hom-shift} and~\eqref{H and Hom}).
Other choices might have done as well.\va1

(c) In the present vein, we propose it as a nontrivial exercise (that, as far as we know, no one has carried out) to specify the relation between the above ``initial avatar" of the fundamental class and
Verdier's isomorphism. (Both isomorphisms have been known for fifty years or so.)\looseness=-1
\end{subrems}

We believe that the fundamental class is important enough, historically and technically, to merit the kind of scrutiny it gets in this paper. 
For~instance, familiarity with various of its aspects 
could well prove useful in establishing  that the pseudo\-functor~$(-)^!$  together
with the  isomorphisms in \eqref{d_f} and \eqref{e_f} satisfy VAR 1--6  (mutatis mutandis) in \cite[p.\,186]{RD}---a version over all of~$\sE$ of the ``Ideal Theorem." 
It is hoped that a full treatment of this application will materialize
in the not-too\kf-distant future. 

\end{cosa}

\begin{small}
\begin{cosa}\label{history}
(Additional background: fundamental class and residues.)
More history and motivation behind the fundamental class can be found in \cite[\S0.6]{AJL2}. \looseness=-1

A preliminary version of the fundamental class, with roots in the dualizing properties of 
differentials on normal varieties, appears in \cite[p.\,114]{G60}, followed by some brief hints about  connections with residues. 
In~\cite{blue}, there is a concrete treatment of the case
when  $S = \spec(k)$ with $k$ a perfect field and $f\colon X\to \spec k$  an integral algebraic 
$k$-scheme. The principal result (``Residue Theorem") reifies $c^{}_{\<f}$ as a globalization of the local
residue maps at the points of~$X\<$,  leading\- to explicit versions of local and global duality and the relation between them. This is generalized to certain maps of noetherian schemes
in \cite{HS}. 

The close relation between the fundamental class and residues
becomes\- clearer, and more general, over formal schemes, where local and global 
duality merge into a single theory with fundamental classes and residues
conjoined.  (See \mbox{\cite[\S5.5]{Mexico};} a complete exposition has yet to appear.)

\end{cosa} 
\end{small}

\section{Preliminaries}\label{prelims} In this section we describe those parts of the duality machinery,
along with a few of their basic properties, that we will subsequently use. The reader is advised 
initially to skip to the next section, referring back to this one as needed.

\begin{cosa}\label{abbrev}

Unless otherwise specified, we will  be working exclusively with functors between full subcategories of categories 
of the form $\D(X)$ (see beginning of~\S\ref{motive}); so to reduce clutter we will write: \va2

$\bullet\ \>\fst\,$ instead of $\R\fst$\quad ($f$ a scheme\kf-map), \va1

$\bullet\ f^*$ instead of $\LL f^*$\quad ($f$ a scheme\kf-map), \va1

$\bullet\ \otimes$ instead of $\Otimes{}$. \va1

$\bullet\ \sHom$ instead of $\R\sHom$. (This is the derived \emph{sheaf-hom} functor.)\va3

\end{cosa}

\begin{cosa}\label{adjunctions}

For any scheme\kf-map $f\colon X\to Z$,\va{.6} the functor $\fst\colon\D(X)\to\D(Z)$ is \emph{symmetric monoidal\kf;} this entails, in particular,  a functorial map 
\begin{equation}\label{symmon}
\fst E_1\otimes_Z\fst E_2\to\fst(E_1\otimes_\sX E_2)\qquad (E_i\in\D(X)), 
\end{equation}
see e.g., \cite[3.4.4(b)]{li}.

The functor $f^*\colon\D(Z)\to\D(X)$ is \emph{pseudofunctorially}\va{-2} left-adjoint to $\fst$ \cite[3.6.7(d)]{li}.  This means that for scheme-maps $X\xto{\:f\:}Z\xto{\:g\:}W$,\va{1.5} the canonical isomorphisms
$(gf)_*\iso g_*f_*$ and $f^*g^*\iso(gf)^*$ are \emph{conjugate}, see \cite[3.3.7(a)]{li}.

For fixed $B\in\D(X)$, the functor $-\otimes_\sX\< \<B$ is left-adjoint to $\sHom_X(B,-)$ 
(see e.g., \cite[(2.6.1)$'\>$]{li}). The resulting counit map
\begin{equation}\label{eval}
\ev(X,B,C)\colon\sHom_X(B,C)\otimes_\sX B\to C\qquad(C\in\D(X))
\end{equation}
is referred to as ``evaluation."
\end{cosa}

\begin{cosa}\label{various}
A scheme\kf-map $f\colon X\to Z$ is \emph{essentially of finite type} if every $z\in Z$ has an affine open neighborhood $\spec A$ whose inverse image is a union of finitely many affine open subschemes 
$\spec B_i$ such that each $B_i$ is a localization of a finitely generated $A$-algebra.  

Throughout, $\sE$ will be the category of essentially-finite\kf-type separated maps of noetherian schemes. 

The category $\sE$ is closed under scheme\kf-theoretic fiber product.
 
An $\sE$\kf-map $f\colon X\to Z$ is  \emph{essentially smooth} if it is flat and it has geometrically regular fibers; or equivalently,  if $f$ is \emph{formally smooth,} that is, for each $x\in X$ and $z\set f(x)$, 
the local ring $\CO_{\<\<X\mkern-2.5mu,\>x}$ is formally smooth over $\CO_{Z\<\<,\>z}$ for the discrete topologies, see \cite[17.1.2, 17.1.6]{EGA4} and \cite[Chapter 0, 19.3.3, 19.3.5(iv) and 22.6.4 \mbox{a)$\Leftrightarrow$ c)}]{EGA4}. (For this equivalence, which involves only local properties, it can be
assumed that $X=\spec A$ and $Z=\spec B_S$ where $B$ is a finite-type $A$-algebra and $S\subset B$ 
is a multiplicatively closed subset; then the relevant local rings are the same for $f$ or for
the finite\kf-type map  $g\colon\spec B\to\spec A$, so that one only needs the equivalence for $g$, as given by \cite[6.8.6]{EGA4}.)

For essentially smooth $f\colon X\to Z$, the diagonal map $X\to X\times_ZX$ is a regular immersion, see \cite[16.9.2]{EGA4},  \cite[16.10.2]{EGA4} and \cite[16.10.5]{EGA4}---whose proof is valid for $\sE$\kf-maps.\va1

Arguing as in
\cite[17.10.2]{EGA4}, one gets that for essentially smooth  $f$  the relative
differential sheaf $\>\Omega_f$  is locally free over $\OX$; moreover, when $f$ is of finite type the rank of $\>\Omega_f$ at $x\in X$ is the dimension at~$x$ of the fiber~$f^{-\<1}\<f(x)$. For any essentially smooth  $f$, the (locally constant) rank of~$\>\Omega_f$ will be  referred to as the \emph{relative dimension of}\kern1.5pt $f$. 

An $\sE$\kf-map is \emph{essentially \'etale} if it is essentially smooth and has relative dimension~0
(cf.~\cite[17.6.1]{EGA4}).
\end{cosa}
  
\begin{cosa}\label{perfect} 
An $\OX$-complex $E$ ($X$ a scheme) is \emph{perfect}
if each $x\in X$ has an open neighborhood $U=U_x$ such that the restriction $E|_U$ is 
$\D(U)$-isomorphic to  a bounded
complex of finite\kf-rank free $\OU$-modules.

When applied to $\sE$\kf-maps, the term ``perfect"  means
``having finite tor-dimension" (or ``finite flat dimension").

Perfection of maps is preserved under composition and under flat base change, see e.g., 
\cite[p.\,243, 3.4 and p.\,245, 3.5.1 ]{Il}.
\end{cosa}

\begin{cosa}\label{^times}
Let  $\Dqcpl(X)\subset\Dqc(X)\subset\D(X)$ be as 
at the beginning of~\S\ref{motive}.

For any $\sE$-map $f\colon X\to Z\<$, there exists a functor 
$f^\times\colon \D(Z)\to\Dqc(X)$ that is bounded below and right-adjoint to 
$\Rf\>$. (See e.g., \cite[\S4.1]{li}.) In particular, with $\id_\sX$ the identity map of $X$ one has a functor
$\id_{\sX}^\times$, the \emph{derived quasi-coherator,} right-adjoint to the inclusion
$\Dqc(X)\hookrightarrow\D(X)$. 

For any complexes $A$ and $B$ in $\Dqc(X)$, and with notation as in \S\ref{abbrev}, set
\begin{equation}\label{RHqc}
\Hqc{\sX}(A,B)\set \id_\sX^{\<\times}\<\sHom_\sX(A,B)\in\Dqc(X).
\end{equation}

$\Dqc(X)$ is a  (symmetric monoidal) closed category, with multiplication  (derived) $\otimes_{\sX}$ 
and internal hom  $\Hqc{\sX}$
(cf.~e.g., \cite[3.5.2(d)]{li}).

For $C\in\Dqc(X)$, the counit map is a $\D(X)$-isomorphism $\id_\sX^\times\<C\iso  C$.
So for $A$, $B\in\D(X)$,\va{.5} the counit map $\Hqc{\sX}(A,B)\to\sHom_\sX(A,B)$
is an \emph{isomorphism} when\va{-.3} $\sHom_\sX(A,B)\in\Dqc(X)$---for example, when $B\in\Dqcpl(X)$ and the cohomology sheaves $H^iA$ are coherent for all $i$,\va{.5} vanishing for $i\gg 0$,
see  \cite[p.\,92, 3.3]{RD}.

\end{cosa}

\begin{cosa}\label{maps}
We will use some standard functorial maps, gathered together here, that are associated to an $\sE$\kf-map $f\colon X\to Z$  and objects $E_\bullet\in\Dqc(X)$, \mbox{$F_\bullet\in\Dqc(Z)$,} 
$G_\bullet\in\Dqc(Z)$.
For the most part, the definitions of these maps emerge category-theoretically from Section~\ref{adjunctions},
cf.~\cite[\S3.5.4]{li}.\va2

{\bf(a)} The map
\begin{equation}\label{* and otimes}
f^*(F\otimes_Z G)\lto f^*\<\<F\otimes_\sX\< f^*G
\end{equation}
that is adjoint to the natural composite
\[
F\otimes_Z G\lto \fst f^*\<\<F\otimes_Z \fst f^*\<\<G\xto{\eqref{symmon}} \fst(f^*\<\<F\otimes_\sX \<f^*\<\<G).
\]
The map \eqref{* and otimes} is an \emph{isomorphism} \cite[3.2.4]{li}.

\vskip3pt
{\bf(b)} The \emph{sheafified adjunction isomorphism}:
\begin{equation}\label{sheafadj}
\fst\sHom_{X}(f^*G, E)\iso\sHom_Z(G, f_*E),
\end{equation}
right-conjugate \cite[\S1.6]{ILN}, for each  fixed $G$, 
to the isomorphism~\eqref{* and otimes}.

\vskip3pt
{\bf(c)} The \emph{projection isomorphisms,} see, e.g., \cite[3.9.4]{li}:
\begin{equation}\label{projn}
\fst E\otimes_{Z}\< G\iso\fst(E\otimes_{X}\<  f^*\<\<G)\quad\textup{and}\quad
G\Otimes{Z}\< \fst E\iso\fst(f^*\<\<G\Otimes{X} \< E).
\end{equation}
These are, respectively, the natural composites
\begin{align*}
&\fst E\otimes_Z\< G\lto\fst E\otimes_Z\< \fst f^*\<\<G\xto{\eqref{symmon}} \fst(E\>\otimes_X\<f^*\<\<G), \\
&G\otimes_Z\< \fst E\lto \fst f^*\<\<G\otimes_Z\< \fst E\xto{\eqref{symmon}}\> \fst(f^*\<\<G\>\otimes_X\<  E).
\end{align*}
(The definitions make sense for arbitrary scheme-maps, though in that generality the composites are not always isomorphisms.)
\vskip3pt
{\bf(d)} The map
\begin{equation}\label{f^*Hom}
f^*\sHom_{Z}(G, F\>) \lto \sHom_{\sX}(f^*\<\<G, f^*\<\<F\>)
\end{equation}
that is $f^*$-$\>\fst$ adjoint to the natural composite map
\[
\sHom_{Z}(G, F\>) \lto \sHom_{Z}(G, \fst f^*\<\<F\>)\underset{\eqref{sheafadj}\>}\iso
\fst\sHom_{\sX}(f^*\<\<G, f^*\<\<F\>).
\]
This  is an isomorphism if the map $f$ is perfect, $G$ is homologically bounded-above, with coherent homology sheaves, and $F\in\Dqcpl(Z)$, see  \cite[Proposition~4.6.6]{li}.

The map \eqref{f^*Hom} is  $\sHom\>$-$\>\otimes$ adjoint to the natural composite map
\[
f^*\sHom_{Z}(G,F\>)\otimes_\sX f^*G\underset{\eqref{* and otimes}}\iso f^*(\sHom_{Z}(G,F\>)\otimes_Z G)
\lto f^*F,
\]
see \cite[Exercise 3.5.6(a)]{li}.

\vskip3pt

\pagebreak[3]

{\bf(e)} The \emph{duality isomorphism}: 
\begin{equation}\label{sheaf dual}
\zeta(E,F\>)\colon\fst\Hqc{\sX}(E, f^\times\<\< F\>)\iso \Hqc{Z}(\fst E,F\>),
\end{equation}
right-conjugate, for each fixed~$E$  to the projection isomorphism 
\[
\fst(f^*G\Otimes{\sX} E)\osi G\Otimes{Z}\fst E\>.
\]

\vskip3pt
{\bf (f)} The bifunctorial map
\begin{equation}\label{chi}
\chi^{}_{\<\<f}(F,G)\colon f^!\<F\otimes_\sX f^* \<\<G \to  f^!\< (F\otimes_Z G), 
\end{equation}
defined in \cite[13.1, 13.2 and 13.3\kf]{Nm14} (with `$\sigma\>$' instead of `$\chi$'), and shown in \cite[13.11]{Nm14} to be an \emph{isomorphism} 
whenever the map $f$ is perfect. 

When $f$ is proper,  $f^!$ is right-adjoint to $\fst$ and the map $\chi^{}_{\<\<f}$ is adjoint to the natural composite map
\[
\fst(f^!\<F\otimes_\sX f^*G)\underset{\eqref{projn}}\iso \fst f^!\<F\otimes_Z G\lto F\otimes_Z G.
\]
\begin{small}
In particular, $\chi^{}_{\<\<f}(\OZ,G)$ identifies with a map of triangulated functors
\[
\ush{f} G\set f^!\OZ\otimes_\sX f^*\<\<G\to f^!G.
\]
Note however that whereas the isomorphism $f^!(G[1])\iso (f^!G)[1]$ associated with the triangulated
structure on $f^!$ is the identity map, 
the same is not true of $\ush{f}$. (See \cite[just before 1.5.5]{li}.)
\par
\end{small}
\vskip3pt
{\bf(g)} The  \emph{base change map}
\begin{equation}\label{bch}
\beta_\clubsuit(F\>)\colon v^*\<\<f^!\<F\lto g^!u^*\<\<F
\end{equation}
associated to a fiber square  in~$\sE$:
\begin{equation*}
\CD
X'@>v>> X\\
@VgV\mkern 20mu V @VVfV \\
Z' @>\lift4.8,\displaystyle\clubsuit,>u> Z
\endCD
\end{equation*} 
with $f$ any $\sE$\kf-map and $u$ (hence $v$) flat. This map is defined
in \cite[11.6 and 11.7]{Nm14}, and denoted $\theta:\mathfrak{R}\longrightarrow\mathfrak{S}$, where $\mathfrak{R}$, $\mathfrak{S}$ are (repectively) the maps
taking the cartesian square to $v^*f^!$ and $g^!u^*$.

In general, this is not an isomorphism. But
if $F\in \Dqcpl(Z)$ then this is the isomorphism of \cite[Thm.\,4.8.3]{li} and \cite[Theorem 5.3]{Nk};
and for any $F\in \Dqc(Z)$, if $f$ is perfect then this is the isomorphism of \cite[\S5.8.4]{AJL1}.
(It actually suffices for $g$ to be perfect, see \cite[11.13(i)]{Nm14}.)

\medbreak
 {\bf(h)} The bifunctorial map
\begin{equation}\label{tensor and Hom}
\kappa_\sX(E_1,E_2,E_3)\colon\sHom_\sX(E_1,E_2)\otimes_\sX E_3\to\sHom_\sX(E_1,E_2\otimes_\sX E_3)
\end{equation}
adjoint to the natural composite---with ev the evaluation map \eqref{eval}---
\[
\sHom_\sX(E_1,E_2)\otimes_\sX E_3\otimes_\sX E_1\iso
\sHom_\sX(E_1,E_2)\otimes_\sX E_1\otimes_\sX E_3\xto{\!\via\>\ev}
E_2\otimes_\sX E_3\>.
\]

The map \eqref{tensor and Hom} is an \emph{isomorphism} if the complex
$E_1$ is \emph{perfect}.  Indeed,
the question is local on $X\<$, so one can assume that $E_1$ is a bounded
complex of finite\kf-rank free $\OX$-modules, and conclude via a simple induction
(like  that in the second-last
paragraph in the proof of \cite[4.6.7]{li}) on the number of degrees
in which $E_1$ doesn't vanish.

Similarly, \eqref{tensor and Hom} is an isomorphism
if $E_3$ is perfect.

\end{cosa}

The  projection map~\eqref{projn} is compatible with derived tensor product, in the following sense:

\begin{lem}\label{projassoc}
Let\/  $f\colon X\to Z$ be a scheme map, $E\in\D(X),$ $F,\,G\in\D(Z)$. The following natural diagram, where $\otimes$ stands for\/ $\otimes_\sX$ or\/ $\otimes_Z\>,$ commutes.
\[
\def\1{$(\fst E\otimes F\>)\otimes G$}
\def\2{$\fst (E\otimes f^*\<\<F\>)\otimes G$}
\def\3{$\fst \big((E\otimes f^*\<\<F\>)\otimes f^*G\big)$}
\def\4{$\fst E\otimes (F\otimes G)$}
\def\5{$\fst\< \big(E\otimes f^*(F\otimes G)\big)$}
\def\6{$\fst\< \big(E\otimes (f^*F\<\<\otimes f^*G)\big)$}
 \bpic[xscale=4.5,yscale=1.3]
  \node(11) at (1,-1){\1};
  \node(12) at (1.95,-1){\2};
  \node(13) at (3,-1){\3};
  
  \node(21) at (1,-2){\4};
  \node(22) at (1.95,-2){\5};
  \node(23) at (3,-2){\6};

   \draw[->] (11) -- (12) node[above=1pt, midway, scale=.75]{\eqref{projn}} ;
   \draw[->] (12) -- (13) node[above=1pt, midway, scale=.75]{\eqref{projn}} ;
   
   \draw[->] (21) -- (22) node[below=1pt, midway, scale=.75]{\eqref{projn}} ; 
   \draw[->] (22) -- (23) node[below=1pt, midway, scale=.75]{\eqref{* and otimes}} ; 
 
   \draw[->] (11) -- (21) node[left=1pt, midway, scale=.75]{$\simeq$} ;   
   \draw[->] (13) -- (23) node[right=1pt, midway, scale=.75]{$\simeq$} ; 

 \epic
\]  
\end{lem}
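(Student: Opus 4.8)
The plan is to reduce the claim to the lax symmetric monoidal structure map $\mmm$ of \eqref{symmon} together with the unit $\eta\colon\id\to\fst f^*$ of the adjunction $f^*\dashv\fst$, and then to verify the square by purely formal manipulations. To begin, I would record two definitional facts. First, the projection map \eqref{projn} is, for $A\in\D(X)$ and $B\in\D(Z)$, the composite $\fst A\otimes B\xrightarrow{1\otimes\eta_B}\fst A\otimes\fst f^*B\xrightarrow{\ \mmm\ }\fst(A\otimes f^*B)$. Second, the map \eqref{* and otimes}, say $q\colon f^*(F\otimes_Z G)\to f^*F\otimes_\sX f^*G$, is the unique arrow with $\fst q\circ\eta_{F\otimes G}=\mmm\circ(\eta_F\otimes\eta_G)$. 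Substituting these into both ways around the square produces a diagram whose edges involve only $\mmm$, the maps $\eta$, associativity isomorphisms (in $\D(X)$ and in $\D(Z)$), and functoriality of $\otimes$ and $\fst$.

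Next I would trace the ``top then right'' route. By bifunctoriality of $\otimes$ (the interchange law) the two unit insertions collect into a single map $(1\otimes\eta_F)\otimes\eta_G$ done first, so this route is $(1\otimes\eta_F)\otimes\eta_G$, then $\mmm\otimes1$, then $\mmm$, then $\fst$ of the $\D(X)$-associativity isomorphism. The last three of these arrows form one leg of the associativity-coherence hexagon of the monoidal functor $\fst$ (part of its symmetric monoidal structure, recalled in \S\ref{adjunctions}; cf.\ \cite[3.4.4]{li}); replacing it by the other leg rewrites the route as $(1\otimes\eta_F)\otimes\eta_G$, then the $\D(Z)$-associativity isomorphism $\alpha$, then $1\otimes\mmm$, then $\mmm$. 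Naturality of $\alpha$ slides it to the front, past $(1\otimes\eta_F)\otimes\eta_G$, leaving $\alpha$, then $1_{\fst E}\otimes(\eta_F\otimes\eta_G)$, then $1\otimes\mmm$, then $\mmm$ --- that is, $\alpha$, then $1_{\fst E}\otimes(\mmm\circ(\eta_F\otimes\eta_G))$, then $\mmm$. On the other side, the ``left then bottom'' route is $\alpha$, then $1\otimes\eta_{F\otimes G}$, then $\mmm$, then $\fst(1_E\otimes q)$; here naturality of $\mmm$ in its second variable turns $\fst(1_E\otimes q)\circ\mmm$ into $\mmm\circ(1_{\fst E}\otimes\fst q)$, so the route becomes $\alpha$, then $1_{\fst E}\otimes(\fst q\circ\eta_{F\otimes G})$, then $\mmm$. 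The two routes now coincide, since $\mmm\circ(\eta_F\otimes\eta_G)=\fst q\circ\eta_{F\otimes G}$ is exactly the defining property of \eqref{* and otimes} recorded above.

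Nothing in this uses any special property of $f$ --- in particular not that the maps \eqref{projn} and \eqref{* and otimes} are isomorphisms --- and each step is either naturality or monoidal coherence. I expect the only real work to be bookkeeping: keeping the three associativity isomorphisms (two in $\D(X)$, one in $\D(Z)$) straight and checking that the monoidal-functor hexagon is invoked for the correct triple $(E,f^*F,f^*G)$. Alternatively one can view the lemma as a formal instance of the coherence of the ``projection'' transformation $R(-)\otimes(-)\to R(-\otimes L(-))$ attached to a lax symmetric monoidal functor $R$ with symmetric monoidal left adjoint $L$; but the explicit argument above, with $R=\fst$ and $L=f^*$, is self-contained and seems the most transparent.
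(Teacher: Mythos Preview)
Your argument is correct and is essentially the paper's own proof, presented linearly rather than diagrammatically: after unfolding each instance of \eqref{projn} via the unit $\eta$ and the lax monoidal structure map, the paper too reduces the claim to the definition of \eqref{* and otimes} (its subdiagram~\circled6, your identity $\fst q\circ\eta_{F\otimes G}=\mmm\circ(\eta_F\otimes\eta_G)$) and to the associativity hexagon for the monoidal functor $\fst$ (its subdiagram~\circled7), with the remaining steps being naturality and interchange.
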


\begin{proof}
After substituting for each instance of \eqref{projn} its definition, and recalling \eqref{symmon},
one comes down to
proving commutativity of the border of the following diagram---whose maps are the obvious ones:
\[\mkern-5mu
\def\1{$(\fst E\<\otimes\< F\>)\<\otimes\< G$}
\def\2{$\fst (E\<\otimes\< f^*\<\<F\>)\<\otimes\< G$}
\def\3{$\fst \big((E\<\otimes\< f^*\<\<F\>)\<\otimes\< f^*G\big)$}
\def\4{$\fst E\<\otimes\< \<(\<F\<\<\otimes\< G)$}
\def\5{$\fst\< \big(E\<\otimes\< f^*(F\<\otimes\< G)\big)$}
\def\6{$\fst\< \big(E\<\otimes\< (f^*\<\<F\<\otimes\< f^*G)\big)$}
\def\7{$(\fst E\<\otimes\< \fst f^*\<\< F\>)\<\otimes\< G$}
\def\8{$(\fst E\<\otimes\< \fst f^*\<\< F\>)\<\otimes\< \fst f^*G$}
\def\9{$\fst E\<\otimes\< (\fst f^*\<\< F\<\otimes\< G)$}
\def\ten{$\fst (E\<\otimes\< f^*\<\<F\>)\<\otimes\< \fst f^*G$}
\def\lvn{$\fst E\<\otimes\< (\fst f^*\<\< F\<\otimes\< \fst f^*G)$}
\def\twv{$\fst E\<\otimes\< \fst (f^*\<\<F\<\otimes\< f^*G)$}
\def\thn{$\fst E\<\otimes\< \fst f^*\<(F\<\otimes\< G)$}
 \bpic[xscale=4.65, yscale=1.2]
 
  \node(11) at (1,-1){\1};
  \node(12) at (2.1,-1){\7};
  \node(13) at (3,-1){\2};
  
  \node(22) at (2.45,-2){\8};
   
  \node(31) at (1,-3){\4};
  \node(32) at (1.75,-3){\9};
  \node(33) at (3,-3){\ten};

  \node(42) at (2.1,-4){\lvn}; 
  \node(43) at (3,-4){\3};
 
  \node(51) at (1,-5){\thn}; 
  \node(52) at (2.1,-5){\twv};
  
  \node(61) at (1,-6){\5};
  \node(63) at (3,-6){\6};
  
    \draw[->] (11) -- (12) node[left=1pt, midway, scale=.75]{$$} ;   
    \draw[->] (12) -- (13) node[left=1pt, midway, scale=.75]{$$} ;  
    
    \draw[->] (31) -- (32) node[left=1pt, midway, scale=.75]{$$} ;  
   
    \draw[->] (51) -- (52) node[left=1pt, midway, scale=.75]{$$} ;   
   
    \draw[->] (61) -- (63) node[left=1pt, midway, scale=.75]{$$} ;  
   
    \draw[->] (11) -- (31) node[left=1pt, midway, scale=.75]{$$} ;   
    \draw[->] (31) -- (51) node[left=1pt, midway, scale=.75]{$$} ;  
    \draw[->] (51) -- (61) node[left=1pt, midway, scale=.75]{$$} ;  
   
    \draw[->] (12) -- (22) node[left=1pt, midway, scale=.75]{$$} ;   
    \draw[->] (12) -- (32) node[left=1pt, midway, scale=.75]{$$} ;
    \draw[->] (31) -- (51) node[left=1pt, midway, scale=.75]{$$} ;  
    
    \draw[->] (32) -- (42) node[right=1pt, midway, scale=.75]{$$} ;
    \draw[->] (42) -- (52) node[right=1pt, midway, scale=.75]{$$} ;
   
    \draw[->] (13) -- (33) node[left=1pt, midway, scale=.75]{$$} ;   
    \draw[->] (33) -- (43) node[left=1pt, midway, scale=.75]{$$} ;  
    \draw[->] (43) -- (63) node[left=1pt, midway, scale=.75]{$$} ;

   \draw[->] (12) -- (22) node[left=1pt, midway, scale=.75]{$$} ; 
   \draw[->] (22) -- (33) node[left=1pt, midway, scale=.75]{$$} ;
   \draw[->] (22) -- (42) node[left=1pt, midway, scale=.75]{$$} ;
   \draw[->] (52) -- (63) node[left=1pt, midway, scale=.75]{$$} ;

   \node at (1.5,-4) {\circled6};
   \node at (2.7,-5) {\circled7};
 \epic
\]

 The commutativity of subdiagram \circled6 follows directly from the definition 
of~\eqref{* and otimes}. Commutativity of \circled7 is
given by symmetric monoidality of $\fst\>$, see~\S\ref{adjunctions}. 
Commutativity of the other subdiagrams
is pretty well obvious, whence the conclusion.
\end{proof}

For the map \eqref{tensor and Hom} we'll need a ``transitivity" property---an instance\- of the 
Kelly-Mac\,Lane coherence theorem \cite[p.\,107, Theorem 2.4]{KlM}:

\begin{lem}\label{trans}
With $\kappa\set\kappa_\sX$ as in \eqref{tensor and Hom}$,$
$\kappa(E_1,E_2,E_3\otimes E_4)$ factors as
\begin{align*}
\sHom_\sX(E_1,E_2)\otimes E_3\otimes E_4
&\xto{\kappa(E_1,E_2, E_3)\otimes\id}
\sHom_\sX(E_1,E_2\otimes E_3)\otimes E_4\\
&\xto{\kappa(E_1,E_2\otimes E_3, E_4)}
\sHom_\sX(E_1,E_2\otimes E_3\otimes E_4).
\end{align*}
\end{lem}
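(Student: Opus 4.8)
The statement is a transitivity (pentagon-type) identity for the map $\kappa$ of \eqref{tensor and Hom}, and the cleanest route is to reduce both composites to a single assertion about the evaluation map \eqref{eval} via the $\sHom$-$\otimes$ adjunction. Recall that $\kappa(E_1,E_2,E_3)$ is \emph{defined} as the map adjoint (under $-\otimes_\sX E_1 \dashv \sHom_\sX(E_1,-)$) to the composite $\sHom_\sX(E_1,E_2)\otimes E_3\otimes E_1 \iso \sHom_\sX(E_1,E_2)\otimes E_1 \otimes E_3 \xto{\ev\otimes\id} E_2\otimes E_3$. So I would first unwind what it means for the two-step composite in the statement to be adjoint to $\ev$ followed by the appropriate symmetry: the claim becomes that, after $-\otimes E_1$, the composite
\[
\sHom_\sX(E_1,E_2)\otimes E_3\otimes E_4\otimes E_1 \to \sHom_\sX(E_1,E_2\otimes E_3)\otimes E_4\otimes E_1 \to E_2\otimes E_3\otimes E_4
\]
(using $\kappa(E_1,E_2,E_3)\otimes\id$ then the counit of $\kappa(E_1,E_2\otimes E_3,E_4)$) equals the single counit composite for $\kappa(E_1,E_2,E_3\otimes E_4)$, namely $\sHom_\sX(E_1,E_2)\otimes E_3\otimes E_4\otimes E_1 \iso \sHom_\sX(E_1,E_2)\otimes E_1\otimes E_3\otimes E_4 \xto{\ev\otimes\id} E_2\otimes E_3\otimes E_4$.

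\textbf{Key steps.} (1) Apply $-\otimes_\sX E_1$ to the asserted factorization and use the triangular identity to replace each occurrence of an adjoint map by its counit description; this turns the identity of maps $A\to \sHom_\sX(E_1,B)$ into an identity of maps $A\otimes E_1\to B$, which is what one verifies. (2) In the first step of the factored composite, $\kappa(E_1,E_2,E_3)\otimes\id_{E_4}$, push the counit of $\kappa(E_1,E_2\otimes E_3,E_4)$ through: since that counit is $\ev$ preceded by a symmetry isomorphism swapping $E_1$ past $E_4$, and $\kappa(E_1,E_2,E_3)$ itself was built from $\ev$ preceded by a symmetry swapping $E_1$ past $E_3$, the two evaluations on $E_1$ collapse to a single evaluation, at the cost of bookkeeping the two symmetry isomorphisms. (3) Check that the composite of those two symmetry isomorphisms (swap $E_1$ past $E_4$, then swap $E_1$ past $E_3$) agrees with the single symmetry in the definition of $\kappa(E_1,E_2,E_3\otimes E_4)$ (swap $E_1$ past $E_3\otimes E_4$); this is exactly a hexagon/coherence instance for the symmetric monoidal structure on $\D(X)$, hence holds by Mac\,Lane coherence. (4) Combine (2) and (3): both sides are now $\ev\otimes\id$ precomposed with the \emph{same} rearrangement isomorphism, so they coincide, and undoing the adjunction from step (1) gives the stated equality of maps into $\sHom_\sX(E_1,E_2\otimes E_3\otimes E_4)$.

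\textbf{Main obstacle.} The only real work is the coherence bookkeeping in steps (2)--(3): one must be careful that the associativity and symmetry isomorphisms of $\otimes_\sX$ are inserted in exactly the right places, since $\kappa$ is defined only up to such canonical isomorphisms and the statement silently identifies $(E_2\otimes E_3)\otimes E_4$ with $E_2\otimes(E_3\otimes E_4)$, etc. Rather than track every associator by hand, I would invoke the Kelly--Mac\,Lane coherence theorem \cite[p.\,107, Theorem 2.4]{KlM} (as the lemma's preamble already signals): both composites are built solely from instances of $\ev$, associativity, and symmetry, with the same source and target in the free symmetric monoidal closed category on the objects $E_1,\dots,E_4$, so they are forced to be equal. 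Thus the proof reduces to observing that $\kappa$ and its iterate are each expressible purely in that canonical language, after which coherence finishes it; no genuine computation is needed.
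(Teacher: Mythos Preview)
Your proposal is correct and follows essentially the same route as the paper: both pass to the adjoint side by tensoring with $E_1$, reduce to an equality of composites landing in $E_2\otimes E_3\otimes E_4$ built from $\ev$ and symmetry/associativity isomorphisms, and then verify that equality using the definitions of the two instances of~$\kappa$. The paper carries this out by drawing a single diagram with three subdiagrams (one a naturality square for the symmetry, the other two exactly the defining adjunctions of $\kappa(E_1,E_2,E_3)$ and $\kappa(E_1,E_2\otimes E_3,E_4)$), whereas you describe the same steps in prose and appeal to Kelly--Mac\,Lane coherence for the bookkeeping; the content is the same.
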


\begin{proof} The assertion results from commutativity of the following natural diagram---where 
$[-,-]\set\sHom_\sX(-,-),$  $\otimes\set\otimes_\sX,$ and various associativity isomorphisms are omitted:
\[\mkern-6.5mu
\def\1{$[E_1,E_2]\<\otimes\<\< E_3\<\otimes\<\< E_4\<\otimes\<\< E_1$}
\def\2{$[E_1,E_2\<\otimes\<\< E_3]\<\otimes\<\< E_4\<\otimes\<\< E_1$}
\def\3{$[E_1,E_2]\<\otimes\<\< E_3\<\otimes\<\< E_1\<\<\otimes\<\< E_4$}
\def\4{$[E_1,E_2\<\otimes\<\< E_3]\<\otimes\<\< E_1\<\<\otimes\<\< E_4$}
\def\5{$[E_1,E_2\<\otimes\<\< E_3\<\otimes\<\< E_4]\<\otimes\<\< E_1$}
\def\6{$[E_1,E_2]\<\otimes\<\< E_1\<\otimes\<\< E_3\<\<\otimes\<\< E_4$}
\def\7{$E_2\<\otimes\<\< E_3\<\otimes\<\< E_4$}
 \bpic[xscale=4.5,yscale=1.2]
 
  \node(11) at (1,-1)[scale=.95]{\1} ;
  \node(13) at (1,-3)[scale=.95]{\2};

  \node(21) at (2,-1)[scale=.95]{\3};  
  \node(22) at (2,-2)[scale=.95]{\4};
  \node(23) at (2.07,-3)[scale=.95]{\5};

  \node(31) at (3,-1)[scale=.95]{\6};
  \node(33) at (3,-3)[scale=.95]{\7};

   \draw[->] (11) -- (21) node[above, midway, scale=0.75]{$\Iso$} ; 
   \draw[->] (21) -- (31) node[above, midway, scale=0.75]{$\Iso$} ;  
   
    \draw[->] (13) -- (23) node[below=1pt, midway, scale=0.75]{$\via\kappa$}; 
    \draw[->] (23) -- (33) node[below=1pt, midway, scale=0.75]{$\ev$};

   
   \draw[->] (11) -- (13)  node[left=1pt, midway, scale=0.75]{$\via\kappa$};
   
   \draw[<-] (22) -- (21) node[right=1pt, midway, scale=0.75]{$\via\kappa$};

   \draw[->] (31) -- (33)   node[right=1pt, midway, scale=0.75]{$\via\>\ev$};

  \draw[->] (13) -- (22) node[above=-2.5pt, midway, scale=0.75]{$\simeq\mkern40mu$} ;            
  \draw[->] (22) -- (33) node[above=-2.5pt, midway, scale=0.75]{$\mkern80mu\via\>\ev$}; 
 
   \node at (1.35,-1.8) {\circled1};
   \node at (2.65,-1.8) {\circled2};
   \node at (2,-2.55) {\circled3};
   
  \epic
\]
Commutativity of subdiagram \circled1 is clear; and that of \circled2 and \circled3 follow from the 
definitions of $\kappa(E_1,E_2,E_3)$ and $\kappa(E_1,E_2\otimes E_3, E_4)$, respectively.
\end{proof}

\begin{cosa} Let $\delta\colon X\to Y$ be an $\sE$\kf-map, and $p\colon Y\to X$ 
a scheme\kf-map such that 
$p\>\delta=\id_\sX$. We will be using the bifunctorial isomorphism 
\[
\psi=\psi(\delta,p, E, F\>)\colon \delta^*\<\<\delta_*E\otimes_\sX F\iso \delta^*\<\<\delta_*(E\otimes_\sX F\>)
\qquad(E,F\in\Dqc(X))
\]
that is defined to be the natural composite 
\begin{equation}\label{psi}
\def\1{$\delta^*\<\<\delta _*\>E\otimes_\sX F$}
\def\3{$\delta^*\<\<\delta_*(E\otimes_\sX F\>)$}
\def\4{$\delta^*\<\<\delta _*\>E\otimes_\sX \delta^*\<p^*\<F$}
\def\5{$\delta^*(\delta _*\>E\otimes_Y p^*\<F\>)$}
\def\6{$\delta^*\<\<\delta_*(E\otimes_\sX \delta^*\<p^*\<F\>)$}
\CD
 \bpic[xscale=3.85,yscale=1.3]
 
  \node(11) at (1,-1){\1} ;
  \node(13) at (3,-1){\3};
  
  \node(21) at (1,-2){\4};  
  \node(22) at (1.98,-2){\5};
  \node(23) at (3,-2){\6};

    \draw[->] (21) -- (22) node[above, midway, scale=0.75]{$\Iso$} 
                                    node[below=1pt, midway, scale=0.75]{\eqref{* and otimes}}; 
    \draw[->] (22) -- (23) node[above, midway, scale=0.75]{$\Iso$} 
                                    node[below=1pt, midway, scale=0.75]{\eqref{projn}};  
  
   
   \draw[->] (11) -- (21)  node[left=1pt, midway, scale=0.75]{$\simeq$};
   
   \draw[<-] (13) -- (23)  node[right=1pt, midway, scale=0.75]{$\simeq$};

  \epic
  \endCD
\end{equation}
(Cf.~\cite[(2.2.6)]{AJL2}.)

\smallskip
The rest of this subsection brings out properties of $\psi$ needed later on.

\begin{sublem} \label{psi and iso} With preceding notation,  the isomorphism
$\psi(\delta,p, E, \delta^*\<p^*\<\<F\>)$ factors as
\[
\delta^*\<\delta_*E\otimes_\sX \delta^*\<p^*\<\<F
\underset{\eqref{* and otimes}}\iso
\delta^*(\delta_*E\otimes_Y p^*\<\<F\>)
\underset{\eqref{projn}}\iso
\delta^*\delta_*(E\otimes_\sX \delta^*\<p^*\<\<F\>).
\]
\end{sublem}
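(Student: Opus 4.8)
The plan is to unwind the definition of $\psi(\delta,p,E,F\>)$ in the special case $F=\delta^*p^*F'$ and observe that one of the two triangles making up the defining pentagon \eqref{psi} collapses, because the ``correction" map $\delta^*p^*(-)$ that appears in \eqref{psi} has already been applied. Concretely, $\psi(\delta,p,E,G\>)$ for general $G$ is, by \eqref{psi}, the composite
\[
\delta^*\delta_*E\otimes_\sX G\xrightarrow{\simeq}\delta^*\delta_*E\otimes_\sX\delta^*p^*G
\xrightarrow[\eqref{* and otimes}]{\sim}\delta^*(\delta_*E\otimes_Y p^*G\>)
\xrightarrow[\eqref{projn}]{\sim}\delta^*\delta_*(E\otimes_\sX\delta^*p^*G\>)
\xrightarrow{\simeq}\delta^*\delta_*(E\otimes_\sX G\>),
\]
where the first and last maps are induced by the unit/counit isomorphism associated with $p\delta=\id_\sX$, namely $\id\iso\delta^*p^*$ (using $(p\delta)^*\iso\delta^*p^*$ and $\id_\sX^*=\id$). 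The claim of Lemma~\ref{psi and iso} is exactly that, when $G=\delta^*p^*F'$, the composite of the first and last of these five arrows is the identity, so that $\psi$ reduces to the stated two-step composite $\eqref{* and otimes}\circ\eqref{projn}$.

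**First** I would make precise the identification $\iota\colon\id\xrightarrow{\sim}\delta^*p^*$. It is the unit of the adjunction $(p\delta)^*\dashv(p\delta)_*$ transported through the pseudofunctorial isomorphism $\delta^*p^*\iso(p\delta)^*=\id^*$; by the pseudofunctoriality recalled in \S\ref{adjunctions} (conjugacy of $(gf)_*\iso g_*f_*$ with $f^*g^*\iso(gf)^*$), $\iota$ is an isomorphism and is compatible with all the structural maps in sight. **Then** the key point is a triangle identity: the first arrow in the pentagon is $\iota$ applied in the second tensor factor (on $F$), and the last arrow is the inverse of $\iota$ applied inside $\delta^*\delta_*(-)$ (on the second tensor factor of $E\otimes_\sX(-)$). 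When $F=\delta^*p^*F'$, applying $\iota$ to $\delta^*p^*F'$ gives $\delta^*p^*(\iota_{F'})\colon\delta^*p^*F'\to\delta^*p^*\delta^*p^*F'$, and one must check this agrees with the map that the last arrow $\iota^{-1}$ undoes — i.e. that $\delta^*p^*$ applied to a unit equals the unit applied to $\delta^*p^*$. This is precisely a naturality/triangle identity for the (iso)unit $\iota$, valid because $\iota$ is a natural isomorphism of functors and $p\delta=\id$; equivalently, it is the statement that in the idempotent-like situation $\delta^*p^*\delta^*p^*\cong\delta^*p^*$ the two evident isomorphisms coincide, which follows from pseudofunctoriality applied to $X\xrightarrow{\delta}Y\xrightarrow{p}X\xrightarrow{\delta}Y\xrightarrow{p}X$ and the fact that the middle $p\delta$ is the identity.

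**The main obstacle** will be the bookkeeping of which tensor slot each instance of $\iota$ acts in, and verifying that the ``first minus last arrow" cancellation is literally a consequence of the coherence (pseudofunctoriality) axioms rather than requiring an ad hoc diagram chase. I would organize this as a single commutative diagram: the outer boundary is \eqref{psi} specialized to $F=\delta^*p^*F'$, and I would insert the object $\delta^*\delta_*E\otimes_\sX\delta^*p^*F'$ at the top with the identity map from the source, using that $\iota_{\delta^*p^*F'}=\delta^*p^*(\iota_{F'})$ together with the matching identity inside $\delta^*\delta_*(E\otimes_\sX-)$ to see that the two triangles built from $\iota$ and $\iota^{-1}$ collapse. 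Once that diagram is drawn, commutativity of each small cell is either naturality of $\otimes_\sX$, naturality of $\delta^*\delta_*$, or the cited coherence; the desired factorization then reads off the remaining edges. I expect the proof to be short modulo this careful slot-tracking, and I would phrase it so that it also transparently handles the $\Dqc$ (not just $\Dqcpl$) setting, since no finiteness hypothesis on $E$ or $F'$ is used anywhere.
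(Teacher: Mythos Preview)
Your proposal is correct and is essentially the same argument as the paper's, just unwound one level further. The paper observes that both
\[
\delta^*\delta_*E\otimes F\iso\delta^*\delta_*E\otimes\delta^*p^*F
\xrightarrow{\ \psi(\delta,p,E,\delta^*p^*F)\ }\delta^*\delta_*(E\otimes\delta^*p^*F)
\iso\delta^*\delta_*(E\otimes F)
\]
and the same composite with $\eqref{projn}\circ\eqref{* and otimes}$ in the middle equal $\psi(\delta,p,E,F)$---the first by functoriality of $\psi$ in its last variable, the second by definition---so the middle arrows agree. Your identity $\iota_{\delta^*p^*F'}=\delta^*p^*(\iota_{F'})$ together with naturality of $\eqref{* and otimes}$ and $\eqref{projn}$ is precisely what ``functoriality of $\psi$'' unpacks to, so the two arguments have identical content; the paper's phrasing just avoids tracking the tensor slots by invoking naturality of the assembled $\psi$ rather than of its pieces.
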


\begin{proof} This results from the fact that the natural composites\va2
\[
\delta^*\<\<\delta_*E\<\otimes F
\!\iso\!
\delta^*\<\<\delta_*E\<\otimes\delta^*\<p^*\<\<F
\xto{\!\psi(\delta,\>\>p,\> E\<,\> \delta^*\<p^*\!F\>) }
\delta^*\<\<\delta_*(E\<\otimes\delta^*\<p^*\<\<F\>)
\!\iso\!
\delta^*\<\<\delta_*(E\<\otimes F\>)
\]
and 
\[
\delta^*\<\<\delta_*E\<\otimes F
\!\iso\!
\delta^*\<\<\delta_*E\<\otimes\delta^*\<p^*\<\<F
\xto{\!\eqref{projn}
{\lift-.3,\smallcirc,}
\eqref{* and otimes}}
\delta^*\<\<\delta_*(E\<\otimes\delta^*\<p^*\<\<F\>)
\!\iso\!
\delta^*\<\<\delta_*(E\<\otimes F\>)
\]
are both equal to $\psi(\delta,p, E, F\>)$ (the first by functoriality of $\psi$, and the second by definition).
\end{proof}

\goodbreak

\begin{subcor} 
   \label{psi and epsilon}
The following natural diagram commutes.
\[
\def\1{$\delta^*\<\<\delta _*E\otimes F$}
\def\2{$E\otimes F$}
\def\3{$\delta^*\<\<\delta_*(E\otimes F\>)$}
 \bpic[xscale=2.5,yscale=1.3]
 
  \node(11) at (1,-1){\1} ;
  \node(13) at (3,-1){\3};
  
  \node(22) at (2,-2){\2};

   \draw[->] (11) -- (13) node[above, midway, scale=0.75]{$\psi$}; 
    
 
   \draw[->] (11) -- (22)  ;
   \draw[->] (13) -- (22)  ;

  \epic
\]

\end{subcor}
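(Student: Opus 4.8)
The plan is to reduce the statement, using the bifunctoriality of $\psi$, to the special case already isolated in Lemma~\ref{psi and iso}, and then to recognize the resulting composite as an instance of the standard compatibility of the projection isomorphism~\eqref{projn} with the counit of the adjunction $\delta^*\dashv\delta_*$.

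First I would name the two unlabeled arrows of the triangle. Let $\varepsilon\colon\delta^*\delta_*\to\id$ be the counit; then the arrow out of $\delta^*\delta_*E\otimes F$ is $\varepsilon_E\otimes\id_F$ and the arrow out of $\delta^*\delta_*(E\otimes F)$ is $\varepsilon_{E\otimes F}$, so the assertion is that $\varepsilon_{E\otimes F}\circ\psi(\delta,p,E,F)=\varepsilon_E\otimes\id_F$. Let $\gamma\colon\delta^*p^*F\iso F$ be the canonical isomorphism coming from $p\delta=\id_\sX$. Functoriality of $\psi$ in its last variable gives $\psi(\delta,p,E,F)\circ(\id\otimes\gamma)=\delta^*\delta_*(\id\otimes\gamma)\circ\psi(\delta,p,E,\delta^*p^*F)$, and naturality of $\varepsilon$ along the isomorphism $\id_E\otimes\gamma$ then reduces the assertion to its version with $F$ replaced by $\delta^*p^*F$; since $\id\otimes\gamma$ is invertible, it therefore suffices to prove $\varepsilon_{E\otimes\delta^*p^*F}\circ\psi(\delta,p,E,\delta^*p^*F)=\varepsilon_E\otimes\id_{\delta^*p^*F}$.

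By Lemma~\ref{psi and iso}, the left-hand $\psi$ here equals the composite
\[
\delta^*\delta_*E\otimes_\sX\delta^*p^*F\xrightarrow{\eqref{* and otimes}}\delta^*(\delta_*E\otimes_Y p^*F)\xrightarrow{\eqref{projn}}\delta^*\delta_*(E\otimes_\sX\delta^*p^*F),
\]
where the first arrow is the isomorphism inverse to \eqref{* and otimes} and the second is $\delta^*$ applied to the projection isomorphism~\eqref{projn} for $\delta$. Postcomposing the second arrow with $\varepsilon$ gives precisely the $\delta^*\dashv\delta_*$ adjoint of the projection morphism $\rho\colon\delta_*E\otimes_Y p^*F\to\delta_*(E\otimes_\sX\delta^*p^*F)$; unwinding the definition of $\rho$ via \eqref{symmon} and the unit $\eta\colon\id\to\delta_*\delta^*$, using that $\delta^*$ is strong symmetric monoidal with $\delta^*\dashv\delta_*$ a monoidal adjunction (so that $\varepsilon$ is a monoidal transformation; see e.g.\ \cite[3.4.4, 3.4.6, 3.4.7]{li}), and cancelling $\varepsilon_{\delta^*p^*F}\circ\delta^*\eta=\id$ by a triangle identity, one finds that this adjoint equals $(\varepsilon_E\otimes\id_{\delta^*p^*F})\circ\eqref{* and otimes}$. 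Since the first arrow above is the inverse of \eqref{* and otimes}, the full composite followed by $\varepsilon$ is $\varepsilon_E\otimes\id$, as required.

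The only genuine computation is this last identification of the adjoint of $\rho$; it is routine but involves tracking several canonical isomorphisms, so I would organize it as the commutativity of one diagram whose cells are instances of the naturality of \eqref{* and otimes}, the monoidality of the counit (equivalently, the construction of \eqref{symmon} in \cite[3.4.4(b)]{li}), and the triangle identities for $\delta^*\dashv\delta_*$. I anticipate no real obstacle here; the purpose of the Corollary is exactly to record, once and for all, this compatibility of $\psi$ with the counit $\varepsilon$ for use later in the paper.
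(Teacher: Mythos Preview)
Your proof is correct and follows essentially the same route as the paper: reduce to $F=\delta^*p^*F$ via functoriality of $\psi$, invoke Lemma~\ref{psi and iso} to factor $\psi$ as~\eqref{* and otimes} followed by~\eqref{projn}, and then verify the compatibility of this composite with the counit. The only difference is that where you sketch the last verification by hand (monoidality of the adjunction, triangle identities), the paper simply cites \cite[3.4.6.2]{li}, which packages exactly that compatibility.
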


\begin{proof} One can replace $F$ by the isomorphic complex~
$\delta^*\<p^*\<\<F\<$, whereupon the assertion follows from Lemma~\ref{psi and iso} via 
\cite[3.4.6.2]{li}.
\end{proof}

\begin{sublem}\label{transpsi}
For $E,F_1,F_2\in\Dqc(X),$ with\/ $\psi(-,-)\set \psi(\delta,p, -, -)$\va1
 and $\otimes\set\otimes_{\sX},$
the isomorphism $\psi(E,\>F_1\otimes\< F_2\>)$ factors $($modulo associativity isomorphisms$\>)$ as
\begin{align*}
\delta^*\<\<\delta_*E\otimes F_1\otimes F_2
&\xto[\psi(E, \>\>F^{}_1)\>\>\otimes\,\id\>\>]{\Iso} \delta^*\<\<\delta_*(E\otimes F_1)\otimes F_2\\
&\xto[\psi(E\>\otimes F^{}_1,\>\>F^{}_2\>)]{\Iso}\delta^*\<\<\delta_*(E\otimes F_1\otimes F_2\>)
\end{align*}
\end{sublem}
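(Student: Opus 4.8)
The plan is to reduce Lemma~\ref{transpsi} to the already-established transitivity of the projection map (Lemma~\ref{projassoc}) and the standard associativity/transitivity of the map~\eqref{* and otimes}, by unwinding the definition~\eqref{psi} of $\psi$. Recall that $\psi(E,F)$ is the composite of an isomorphism $\simeq$ (coming from $p\delta=\id$, i.e.\ via the unit $\id\iso\delta^*p^*$), then \eqref{* and otimes}, then \eqref{projn}, then $\simeq$ again. So $\psi(E,F_1\otimes F_2)$ expands into a four-term string involving $\otimes_Y$ and $p^*(F_1\otimes F_2)$, while the claimed composite expands into a longer string; the assertion is that the two agree.

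First I would replace, throughout, each $F_i$ by the isomorphic complex $\delta^*p^*F_i$; by functoriality of every map involved (and of $\psi$ itself, already used in Lemma~\ref{psi and iso}) it suffices to prove the factorization for objects of that form. With this reduction, Sublemma~\ref{psi and iso} applies: it says that on objects pulled back along $p$, the isomorphism $\psi$ is simply $\eqref{projn}\smallcirc\eqref{* and otimes}$, with no unit isomorphisms to track. Thus the whole statement becomes an identity between two composites built purely from \eqref{* and otimes} and \eqref{projn} (and associativity isomorphisms), namely
\[
\eqref{projn}_{E,\,p^*(F_1\otimes F_2)}\smallcirc\eqref{* and otimes}
\ \ \text{versus}\ \
\big(\eqref{projn}\smallcirc\eqref{* and otimes}\big)_{E\otimes\delta^*p^*F_1,\,p^*F_2}\smallcirc\big(\eqref{projn}\smallcirc\eqref{* and otimes}\big)_{E,\,p^*F_1}\otimes\id.
\]

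Next I would split this into two independent transitivity facts. For the \eqref{projn}-part: the outer square of the diagram in Lemma~\ref{projassoc}, read with $f=\delta$, $E$ there $=\delta_*E$, $F=p^*F_1$, $G=p^*F_2$, together with the compatibility of \eqref{* and otimes} with pullback of $F_1\otimes_Y F_2\iso f^*F_1\otimes f^*F_2$ (the bottom-right edge of that same diagram), gives exactly the needed factorization of $\eqref{projn}_{\delta_*E,\,p^*(F_1\otimes F_2)}$ through $\eqref{projn}_{\delta_*E,\,p^*F_1}$ followed by $\eqref{projn}_{\delta_*E\otimes p^*F_1,\,p^*F_2}$, after applying $\delta^*$ and inserting the $\delta^*\delta_*\iso\id$ counits. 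For the \eqref{* and otimes}-part: one uses that \eqref{* and otimes} is a monoidal-functor structure on $f^*$, hence satisfies the pentagon/transitivity identity $\eqref{* and otimes}_{E',\,F_1\otimes F_2}=(\id\otimes\eqref{* and otimes}_{F_1,F_2})\smallcirc\eqref{* and otimes}_{E',F_1}$ (modulo associativity), which is \cite[3.4.7]{li} or follows from \cite[3.4.2]{li}; applied with $E'=\delta_*E$ this reorganizes the $\delta^*(\delta_*E\otimes_Y p^*F_1\otimes_Y p^*F_2)$ terms correctly. Splicing the two reorganizations along the intermediate object $\delta^*\delta_*(E\otimes\delta^*p^*F_1)\otimes\delta^*p^*F_2$ yields the claim.

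The routine-but-real obstacle is bookkeeping: the intermediate object $\delta^*\delta_*(E\otimes\delta^*p^*F_1)\otimes\delta^*p^*F_2$ must be matched against $\delta^*\delta_*(E\otimes\delta^*p^*F_1)\otimes F_2$ (via $\psi(E\otimes F_1,F_2)$ on pulled-back objects), so one has to check that the ``$\simeq$'' reintroduced when re-expressing the second half as $\psi$ of a pullback is the right one---this is again Sublemma~\ref{psi and iso}, but now with $E$ replaced by $E\otimes\delta^*p^*F_1$ and $F$ by $F_2$. I expect no conceptual difficulty: everything is coherence among the symmetric-monoidal data of $\delta^*$ and $\delta_*$ and the projection formula, so the cleanest writeup is a single large commutative diagram whose top route is $\psi(E,F_1\otimes F_2)$ (expanded via Sublemma~\ref{psi and iso}), whose bottom route is the two-step composite (each step expanded via Sublemma~\ref{psi and iso}), and whose interior cells are instances of Lemma~\ref{projassoc}, of the transitivity of \eqref{* and otimes}, and of naturality squares---then invoke, if desired, the Kelly--Mac\,Lane coherence theorem \cite[p.\,107, Theorem 2.4]{KlM} to dispatch the purely associativity-theoretic cells wholesale.
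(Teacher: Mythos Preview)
Your proposal is correct and uses the same essential ingredients as the paper's proof: Lemma~\ref{projassoc} for the transitivity of the projection isomorphism~\eqref{projn}, the monoidal-functor coherence for~\eqref{* and otimes} (what the paper cites as \cite[3.6.10]{li} and \cite[3.4.4(b)]{li}), and the compatibility of the unit $\id\iso\delta^*p^*$ with tensor product. The only difference is organizational: the paper writes one large diagram whose border is the assertion and whose interior cells are exactly these three ingredients (labeled \circled1, \circled2, \circled3), whereas you first reduce via functoriality and Sublemma~\ref{psi and iso} to objects of the form $\delta^*p^*(-)$ and then check the remaining coherence---but that reduction step still requires the unit-versus-tensor compatibility (your ``bookkeeping'' paragraph), which is precisely the content of the paper's cells \circled1 and \circled3, so no work is actually saved.
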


\begin{proof} The Lemma asserts commutativity the border of the next diagram, in which $\otimes$
stands for $\otimes_\sX$ or $\otimes_Y$, and the maps are the obvious ones:\va3
\[\mkern-5mu
\def\1{$\delta^*\<\<\delta_*E\<\otimes\< F_1\<\otimes\< F_2$}
\def\2{$\delta^*(\delta_*E\<\otimes\< p^*\<\<F_1)\<\otimes\< F_2$}
\def\3{$\delta^*\<\big(\delta_*E\<\otimes\< p^*(F_1\<\otimes\< F_2\>)\big)$}
\def\4{$\delta^*\<\<\delta_*(E\<\otimes\< \delta^*\<p^*\<\<F_1)\<\otimes\< F_2$}
\def\5{$\delta^*\<\<\delta_*\<\big(E\<\otimes\< \delta^*\<p^*(F_1\<\otimes\< F_2\>)\big)$}
\def\6{$\delta^*\<\<\delta_*(E\<\otimes\< F_1)\<\otimes\< F_2$}
\def\7{$\delta^*\<\<\delta_*(E\<\otimes\< F_1\<\otimes\< F_2\>)$}
\def\8{$\delta^*\<\<\delta_*(E\<\otimes\< F_1\<\otimes\< \delta^*\<p^*\<\<F_2\>)$}
\def\9{$\,\delta^*(\delta_*E\<\otimes\< p^*\<\<F_1)\<\otimes\< \delta^*\<p^*\<\<F_2$}
\def\ten{$\delta^*(\delta_*E\<\otimes\< p^*\<\<F_1\<\otimes\< p^*\<\<F_2\>)$}
\def\lvn{$\delta^*\<\<\delta_*(E\<\otimes\< \delta^*\<p^*\<\<F_1)\<\otimes\< \delta^*\<p^*\<\<F_2\:$}
\def\twv{$\delta^*(\delta_*(E\<\otimes\< \delta^*\<p^*\<\<F_1)\<\otimes\< p^*\<\<F_2\>)$}
\def\thn{$\delta^*\<\<\delta_*\<\big(E\<\otimes\< \delta^*(p^*\<\<F_1\<\otimes\< p^*\<\<F_2\>)\big)$}
\def\frn{$\,\delta^*\<\<\delta_*(E\<\otimes\< \delta^*\<p^*\<\<F_1\<\otimes\< \delta^*\<p^*\<\<F_2\>)$}
\def\ffn{$\delta^*\<\<\delta_*(E\<\otimes\< F_1)\<\otimes\< \delta^*\<p^*\<\<F_2$}
\def\sxn{$\delta^*(\delta_*(E\<\otimes\< F_1)\<\otimes\< p^*\<\<F_2\>)$}
 \bpic[scale=.8, xscale=3.8, yscale=1.8]
 
  \node(11) at (1,-.85)[scale=.9]{\1};
  \node(14) at (4,-.85)[scale=.9]{\2};
  
  \node(22) at (3.15,-1.57)[scale=.9]{\9};

  \node(31) at (1,-3.25)[scale=.9]{\3};
  \node(32) at (1.73,-2.5)[scale=.9]{\ten};
  \node(33) at (3.15,-3.25)[scale=.9]{\lvn};
  \node(34) at (4,-2.5)[scale=.9]{\4};

  \node(42) at (1.73,-4.75)[scale=.9]{\thn};
  \node(43) at (4,-4.95)[scale=.9]{\ffn};
  \node(44) at (4,-3.77)[scale=.9]{\6};
 
  \node(51) at (1,-5.5)[scale=.9]{\5};       
  \node(52) at (2.127,-6.25)[scale=.9]{\frn};
  \node(53) at (3.15,-5.5)[scale=.9]{\twv};
   
  \node(61) at (1,-7.3)[scale=.9]{\7};
  \node(63) at (2.4,-7.3)[scale=.9]{\8};
  \node(64) at (4,-7.3)[scale=.9]{\sxn};

   \draw[->] (11) -- (14) ;
   
   \draw[->] (31) -- (32) ; 

   \draw[->] (31) -- (32) ;
  
   \draw[->] (51) -- (42) ;
   \draw[<-] (43) -- (44) ;

   \draw[<-] (52) -- (53) ;
    
   \draw[<-] (61) -- (63) ;   
   \draw[<-] (63) -- (64) ;  
   
   \draw[->] (11) -- (31) ;   
   \draw[->] (31) -- (51) ; 
   \draw[->] (51) -- (61);

   \draw[->] (32) -- (42) ;
   \draw[->] (42) -- (52) ;

   \draw[->] (33) -- (43) ; 
        
   \draw[->] (14) -- (34) ;   
   \draw[->] (34) -- (44) ;

   \draw[<-] (22) -- (14) ; 
   \draw[->] (22) -- (32) ;  
   \draw[->] (22) -- (33) ; 
   \draw[->] (34) -- (33) ; 
   \draw[->] (32) -- (53) ; 
   \draw[->] (33) -- (53) ; 
   \draw[->] (52) -- (63) ;
   \draw[->] (53) -- (64) ;  
   \draw[->] (43) -- (64) ; 
     
   \node at (1.73,-1.6) {\circled1};
   \node at (1.73,-6.8) {\circled3};
   \node at (1.98,-4.15) {\circled2};   
   
  \epic
\]

Commutativity of the unlabeled subdiagrams is easy to verify.

\pagebreak[3]
Subdiagram \circled1 expands naturally as\va3
\[\mkern-3mu
\def\1{$\delta^*\<\<\delta_*E\<\otimes\< F_1\<\otimes\< F_2$}
\def\2{$\delta^*(\delta_*E\<\otimes\< p^*\<\<F_1)\<\otimes\< F_2$}
\def\3{$\delta^*\<\big(\delta_*E\<\otimes\< p^*(F_1\<\otimes\< F_2\>)\big)$}
\def\4{$\delta^*\<\<\delta_*E\<\otimes\< \delta^*\<p^*\<\<F_1\<\otimes\< F_2$}
\def\5{$\delta^*\<\<\delta_*E\<\otimes\< \delta^*\<p^*(F_1\<\otimes\< F_2\>)$}
\def\6{$\delta^*\<\<\delta_*E\<\otimes\< \delta^*(p^*\<\<F_1\<\otimes\< p^*\<\<F_2\>)$}
\def\7{$\delta^*\<\<\delta_*E\<\otimes\< \delta^*\<p^*\<\<F_1\<\otimes\< \delta^*\<p^*\<\<F_2$}
\def\9{$\,\delta^*(\delta_*E\<\otimes\< p^*\<\<F_1)\<\otimes\< \delta^*\<p^*\<\<F_2$}
\def\ten{$\delta^*(\delta_*E\<\otimes\< p^*\<\<F_1\<\otimes\< p^*\<\<F_2\>)$}
\bpic[xscale=4.45, yscale=1.8]
 
  \node(11) at (1,-1)[scale=.92]{\1};
  \node(12) at (2,-1)[scale=.92]{\4};
  \node(13) at (3,-1)[scale=.92]{\2};
  
  \node(21) at (1,-1.75)[scale=.92]{\5};
  \node(22) at (1.5,-2.5)[scale=.92]{\6};

  \node(225) at (2,-1.75)[scale=.92]{\7};
  
  \node(31) at (1,-3.25)[scale=.92]{\3};
  \node(32) at (2,-3.25)[scale=.92]{\ten};
  \node(33) at (3,-3.25)[scale=.92]{\9};

   \draw[->] (11) -- (12) ;
   \draw[->] (12) -- (13) ;
   
   \draw[->] (21) -- (22) ; 
   \draw[->] (31) -- (32) ; 
   \draw[->] (33) -- (32) ; 
   
   \draw[->] (11) -- (21) ;   
   \draw[->] (21) -- (31) ; 
  
   \draw[->] (22) -- (32) ;
   
   \draw[->] (13) -- (33) ; 

   \draw[<-] (22) -- (225) ; 
   \draw[->] (225) -- (33) ; 
   \draw[<-] (225) -- (12) ;

   \node at (1.5,-1.375) {\circled4};
   \node at (2.15,-2.55) {\circled5};
   
  \epic
\]

For commutativity of \circled4, see the proof of \cite[3.6.10]{li}. For that of \circled5, see \cite[Example 3.4.4(b)]{li}. Commutativity of the other two subdiagrams is clear, and so \circled1 commutes.

Commutativity of \circled2 is given by Lemma~\ref{projassoc}.

Finally, for commutativity of \circled3, see again the proof of \cite[3.6.10]{li}.
\end{proof}

\begin{subcor} \label{psiO=id}
Upon identifying---as one may---$G\otimes\OX$ with\/ $G$ for all\/ $G\in\D(X),$ one has that\/ $\psi\set\psi(E,\OX)$ is the identity map of\/ $\delta^*\delta_*E$.\looseness=-1
\end{subcor}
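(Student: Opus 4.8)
The plan is to unwind the definition \eqref{psi} of $\psi(\delta,p,E,\OX)$ and observe that, once the tautological identifications of $\OX$ with $\delta^*\OY=\delta^*p^*\OX$ and of $\OY$ with $p^*\OX$ are inserted, each of the four maps composing \eqref{psi} becomes an identity. Thus everything reduces to two elementary ``unitality'' properties of the structure maps recalled in \S\ref{maps}.

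First I would note that the isomorphism $F\iso\delta^*p^*F$ used in the outer (vertical) legs of the square \eqref{psi} is, when $F=\OX$, simply the canonical identity $\OX=(p\delta)^*\OX\iso\delta^*p^*\OX$; this is because the pseudofunctor $(-)^*$ is unital in the coherent sense. Hence the two vertical legs of \eqref{psi} drop out, and it remains to see that the bottom row,
\[
\delta^*\delta_*E\otimes_\sX\OX
\xto{\,\eqref{* and otimes}\,}
\delta^*(\delta_*E\otimes_Y\OY)
\xto{\,\eqref{projn}\,}
\delta^*\delta_*(E\otimes_\sX\OX),
\]
is the identity map of $\delta^*\delta_*E$, modulo the unit isomorphisms $-\otimes\OX\iso-$ and $-\otimes\OY\iso-$. (Equivalently, one may quote Lemma~\ref{psi and iso} with its $F$ taken to be $\OX$, which presents $\psi(E,\OX)$ directly in this form.)

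Next I would appeal to the unitality of the two maps in that composite. The first is the instance of \eqref{* and otimes} whose second tensor factor is the unit $\OY\in\D(Y)$; since $\delta^*$ is a symmetric monoidal functor (\S\ref{adjunctions}), this map is compatible with the unit constraints, so under the identifications $\delta_*E\otimes_Y\OY=\delta_*E$, $\delta^*\OY=\OX$, $\delta^*\delta_*E\otimes_\sX\OX=\delta^*\delta_*E$ it becomes the identity (cf.\ \cite[\S3.4]{li}). The second is $\delta^*$ applied to the instance of the projection isomorphism \eqref{projn} with $G=\OY$; unwinding its definition (\S\ref{maps}(c)) via the unit axiom for the symmetric monoidal functor $\delta_*$ shows that it too becomes the identity after those identifications. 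Composing, the displayed row is the identity, which is the assertion.

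The one point to watch---and what I expect to be the only real obstacle---is the bookkeeping of canonical identifications: one must check that the identification of $\OX$ with $\delta^*p^*\OX$ implicit in \eqref{psi} is literally the one that reappears when rewriting $\delta^*(\delta_*E\otimes_Y\OY)$ and $\delta^*\delta_*(E\otimes_\sX\OX)$ via the unit constraints, so that the contributions cancel exactly rather than leaving a residual automorphism of $\delta^*\delta_*E$. This is a pure coherence matter---it follows from compatibility of the unit constraints for the monoidal functors $\delta^*$ and $\delta_*$ with the pseudofunctoriality of $(-)^*$---so no actual computation is required, only careful citation of the relevant coherence facts.
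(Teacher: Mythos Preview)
Your argument is correct: unwinding \eqref{psi} with $F=\OX$ and invoking the unit coherence of the monoidal functors $\delta^*$ and $\delta_*$ does show that each constituent map becomes the identity under the standard identifications. But the paper proceeds quite differently, and more economically: it applies Lemma~\ref{transpsi} with $F_1=F_2=\OX$, which (after the identifications $E\otimes\OX=E$ and $\OX\otimes\OX=\OX$) reads $\psi=\psi\circ\psi$; since $\psi$ is an isomorphism, $\psi=\psi^2\psi^{-1}=\id$. The idempotent trick sidesteps entirely the coherence bookkeeping you flag as ``the one point to watch''---no need to chase the unit constraints through \eqref{* and otimes} and \eqref{projn} separately. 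Your direct approach has the merit of being self-contained (it does not rely on the transitivity Lemma~\ref{transpsi}, whose proof is itself a nontrivial diagram chase), but it trades that dependence for several appeals to monoidal-functor unitality that, while standard, would need to be pinned down precisely to be airtight.
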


\begin{proof}
The case $F_1=F_2=\OX$ of Lemma~\ref{transpsi} implies that $\psi^2=\psi$, whence 
$\psi=\psi^2\psi^{-\<1}=\psi\psi^{-\<1}=\id$.
\end{proof}

\end{cosa}

\section{Verdier's isomorphism}\label{V}
Recall from Section~\ref{abbrev} that, unless otherwise specified, all the functors that appear are functors between derived categories.

\begin{cosa}\label{V's thm}
Let $U\xto{i\mkern1.5mu}X\xto{f} Z$ be $\sE$\kf-maps, with $i$ essentially \'etale, and 
let \va3
\[
 \CD
\hbox to 0pt{\hss$Y\set$} @.\:U\times_{\<Z} X@>p^{}_2>> X\\
@.@Vp^{}_1VV @VVfV \\
@.U @>\lift4.8,\displaystyle\spadesuit\mkern20mu,>fi> Z
 \endCD
\]
be the resulting fiber square, where  $p^{}_1$ and $p^{}_2$ are the canonical projections. 
With $\delta_U$ the diagonal map and $\id_U$ the identity map of~$U\<$, the composite\begin{equation}\label{graph}
g\colon U \xto{\,\delta_U\,}U\times_{\<Z} U \xto{\<\id_U\!\times\> i\>\>}U\times_{\<Z}X
\end{equation}
is the graph of $i$, i.e., $p^{}_1\>g=\id_U$  and $p^{}_2\>g=i$.\va1
 
In Theorem 3 of \cite{V},   $fi$ is essentially smooth, of relative dimension, say,~$d$ (see Section~\ref{various} above);
and that theorem says  \emph{there is a\/ $\D(U)$-isomorphism} 
\[
(fi)^!\OZ\iso \Omega^{\>d}_{\<fi}[d\>]\set\big(\lift1.7,{\bigwedge},\lift2,{\<\<d},\, \Omega_{\<fi}\big)[d\>].
\] 

A slight elaboration of Verdier's approach produces, as follows, a func\-torial isomorphism
\begin{equation}\label{vf}
v^{}_{\<\<f\<,\>i}(F\>)\colon (fi)^!\<F \iso \Omega^{\>d}_{\<fi}[d\>]\otimes_U\< (fi)^{\<*}\<F
\qquad \big(F\in\Dqcpl(Z)\big).
\end{equation}

In particular, making the allowable identifications 
\[
\Omega^{\>d}_{\<fi}[d\>]\otimes_{\sX}\<(fi)^*\OZ=\Omega^{\>d}_{\<fi}[d\>]\otimes_{\sX}\<\OX
=\Omega^{\>d}_{\<fi}[d\>], 
\]
we consider $v^{}_{\<\<f\<,\>i}(\OZ)$ to be a map from $(fi)^!\OZ$ to 
$\Omega^{\>d}_{\<fi}[d\>]$.\va2

(In \S\ref{unbounded v}, the definition of $v^{}_{\<\<f\<,\>i}(F\>)$ will be extended to all $F\in\Dqc(Z)$.)\va2

The construction of $v^{}_{\<\<f\<,\>i}(F\>)$ uses two maps.\va{.6} The first is the following natural composite map $\vartheta$\va{.6}---whose definition needs only that $fi\>$ be \emph{ flat}---with 
$\chi^{}_{g}$  as in \eqref{chi}, and $\beta_\spadesuit$ as in \eqref{bch} (an isomorphism since $F\in\Dqcpl(Y)$):\looseness=-1
\stepcounter{equation}
\[
\begin{aligned}\label{first}
\vartheta\colon g^!\OY\otimes_U (fi)^!\<F
&\iso g^!\OY\otimes_U i^*\mkern-2.5muf^!\<F\\
&\iso g^!\OY\otimes_U g^*\<p_2^*\>f^!\<F\\
&\,\overset{\chi^{}_{g}}\lto\, g^!\<p_2^*\>f^!\<F \\
&\,\overset{\beta_\spadesuit\>\>}\lto\, g^!p_1^!\>(fi)^*\<\<F\\[3pt]
& \iso (fi)^*\<\<F\<.
 \end{aligned}\tag{\theequation}
\]
  If $fi$ is essentially smooth, then in \eqref{graph}, 
both $\id_U\!\times\, i$ and the regular immersion
$\delta_U$ are perfect, whence so is $g$ (see \S\ref{perfect}),  so that $\chi^{}_{g}$ is an isomorphism.%
\footnote{\kern1pt As $\delta_U$ is finite and $\id_U\!\times\, i$ is essentially \'etale, one can see this more concretely by showing that $\delta_{U\<\<*}(\chi)$ is isomorphic to the natural map\va{-2} 
\[
\sHom(\delta_{U\<\<*}\OU\<,\CO_{U\<\times_{\<\<Z}U})\otimes G\lto\sHom(\delta_{U\<\<*}\OU\<,G)\qquad(G\set (\id_U\!\times\, i)^{\mkern-1.5mu*}\<p_2^*f^!\<\<F\>).
\]
}
Thus, in this case, $\vartheta$ is an isomorphism.\va 1

The second is an isomorphism that holds when $fi$ is essentially smooth of relative dimension $d$,
\begin{equation}\label{fund2}
g^!\OY\iso \sHom^{}_U(\Omega^{\>d}_{\<fi\>},\>\OU)[-d\>],
\end{equation}  
described in \cite[p.\,180, Corollary 7.3]{RD} or \cite[\S2.5]{C1} via  
the ``fundamental\- local isomorphism"  and Cartan-Eilenberg resolutions.
An avatar~\eqref{def-fund2} of~\eqref{fund2} is reviewed in \S\ref{fundloc} below. 

The $\OU$-module $\Omega^{\>d}_{\<fi\>}$ is invertible, so the complexes
$\sHom^{}_U(\Omega^{\>d}_{\<fi\>},\>\OU)[-d\>]$ 
and~$\sHom^{}_U(\Omega^{\>d}_{\<fi\>}[d\>],\>\OU)$ are naturally isomorphic in $\D(U)$ to the
complex~$G$ which is $(\Omega^{\>d}_{\<fi\>})^{-\<1}$ in degree $d$ and 0 elsewhere. Modulo these isomorphisms, the  isomorphism
\begin{equation}\label{Hom-shift}
\sHom^{}_U(\Omega^{\>d}_{\<fi\>},\>\OU)[-d\>]\iso\sHom^{}_U(\Omega^{\>d}_{\<fi\>}[d\>],\>\OU),
\end{equation}
resulting from the usual triangulated structure on the functor $\sHom(-,\OU)$ is given by scalar multiplication in $G$ by $(-1)^{d^2+d(d-1)/2}=(-1)^{d(d+1))/2}$ (cf.~\cite[p.\,11, (1.3.8)]{C1}.) 

Let
\begin{equation}\label{gamma}
\gamma\colon g^!\OY\iso\sHom^{}_U(\Omega^{\>d}_{\<fi\>}[d\>],\>\OU)
\end{equation}
be the isomorphism obtained by composition from \eqref{fund2} and \eqref{Hom-shift}.

Thus when $fi$ is essentially smooth, so that $\Omega^{\>d}_{\<fi\>}$ is an \emph{invertible} $\OU$-module, one has a chain of natural functorial isomorphisms, the first being inverse to 
the evaluation  map ~\ref{eval}:
\begin{equation}\label{defv}
 \begin{aligned}
  (fi)^!\<F
  &\iso
  \sHom^{}_U(\Omega^{\>d}_{\<fi\>}[d\>],(fi)^!\<F\>) \otimes_U\Omega^{\>d}_{\<fi\>}[d\>]\\[2pt]
  &\underset{\eqref{tensor and Hom}}\iso
  \sHom^{}_U(\Omega^{\>d}_{\<fi\>}[d\>],\>\OU)\otimes_U (fi)^!\<F\otimes_U
   \Omega^{\>d}_{\<fi\>}[d\>]\\
  &\underset{\eqref{gamma}}\iso
  g^!\OY\otimes_U (fi)^!\<F \otimes_U\Omega^{\>d}_{\<fi\>}[d\>]\\[-3pt]
  &\underset{\eqref{first}}\iso
  (fi)^*\<\<F\otimes_U\Omega^{\>d}_{\<fi\>}[d\>]\iso\Omega^{\>d}_{\<fi\>}[d\>]\otimes_U (fi)^*\<\<F.
 \end{aligned}
 \end{equation}
The map~$v^{}_{\<\<f\<,\>i}(F\>)$ is defined to be the composition of this chain.
\pagebreak[3]
\end{cosa}

\begin{cosa}\label{fundloc} Expanding a bit on \cite[\S2, II]{LS}, we review the relation~ \eqref{fund3}
between the normal bundle of a regular immersion \mbox{$\delta\colon U\to W$} and the relative dualizing complex $\delta^!\OW$; and from that deduce the isomorphism~\eqref{fund2}.\va1

Let $\delta\colon U\to W$ be any  closed immersion of schemes, and $I$  the kernel of the associated surjective map $s\colon\OW\twoheadrightarrow\delta_{\<*}\OU$.
Then  
\[
\OU\cong H^0\delta^*\<\OW\xto{H^0\mkern-.5mu \delta^*\!s\>\>} H^0\delta^*\<\delta_{\<*}\OU\cong \OU,
\]
is an \emph{isomorphism.} 

\pagebreak[3]
The natural triangle
\[
\delta^*\mkern-1.5mu I\lto\delta^*\OW\xto{\delta^*\<\<s\,}\delta^*\<\delta_{\<*}\OU\overset{+\,}\lto
\]
gives rise to an exact sequence of $\OU$-modules
\[
\CD
H^{-1}\delta^*\OW@>>> H^{-1}\delta^*\<\delta_{\<*}\OU@>t>> H^0\delta^*\<I@>>> \ker(H^0\delta^*\<\<s)\\
@| @. @| @|\\
0 @. @.  I\</\<I^{\mkern.5mu\sst2} @. 0
\endCD
\]
Clearly, $t$  is an isomorphism, and so one has the natural  $\OU$-isomorphism 
\begin{equation}\label{I/I^2}
t^{\<-1}\colon I\</\<I^{\mkern.5mu\sst2}\iso H^{-1}\delta^*\<\delta_{\<*}\OU.
\end{equation}
(The isomorphism $t$ is induced by the projection $C[-1]\twoheadrightarrow P$, where
$K\to I$ is a flat resolution of $I$ and $C$ is the mapping cone of the composite map 
$K\to I\hookrightarrow \OW$. It is  the \emph{negative} of the connecting homomorphism 
\[
\stor_1^{\OW}\!(\delta_{\<*}\OU,\delta_{\<*}\OU)\to\stor_0^{\OW}\!(\delta_{\<*}\OU,I)
\]
usually attached to the natural exact sequence  $0\to I\to\OW\to\delta_{\<*}\OU\to 0$, see \cite[end of \S1.4]{li}.)

There is an alternating graded $\OU$-algebra structure on 
$\oplus_{n\ge 0}\>\>H^{-n}\delta^*\<\delta_{\<*}\OU$, induced by the natural product map
\[
\delta^*\<\delta_{\<*}\OU\otimes_U\>\delta^*\<\delta_{\<*}\OU
\<\<\iso\<\<\delta^*(\delta_{\<*}\OU\otimes_W\>\delta_{\<*}\OU)
\lto\delta^*\<\delta_{\<*}(\OU\otimes_U\OU)\<\<\iso\<\<\delta^*\<\delta_{\<*}\OU. 
\]
(Cf.~e.g., \cite[p.\,201, Exercise 9(c)]{Bo}.) Hence \eqref{I/I^2} extends uniquely to 
a homomorphism of graded $\OU$-algebras
\begin{equation}\label{fund1}
\oplus_{n\ge 0}\lift1.7,{\bigwedge},\lift2,{\<\<n},(I\</\<I^{\mkern.5mu\sst2})\lto 
\oplus_{n\ge 0}\>\>H^{-n}\>\delta^*\<\delta_{\<*}\OU.
\end{equation}

For example, over an affine open subset of $W\<$,\va{.6} if $I$ is generated by~a regular sequence of global 
sections\va{.5} $\mathbf t\set(t_1,t_2, \dots, t_d)$ then a finite free resolution of $\delta_*\OU$ is provided by the Koszul\va{.75} complex $K(\mathbf t)\set\otimes_{i=1}^d K_i$ where $K_i$ is the complex\va1 which is $\OW\xto{\lift.95,\<t_i\>\>,}\OW$ in degrees -1 and 0 and vanishes elsewhere; and there results
a $\D(U)$-map, 
\begin{equation}\label{fund1'}
\oplus_{n =0}^d\,\lift1.7,{\bigwedge},\lift2,{\<\<n},(I\</\<I^{\mkern.5mu\sst2})[n]\cong
\delta^*\<K(\mathbf t)\iso \delta^*\<\delta_{\<*}\OU.
\end{equation}
(Note that $\delta^*\<K(\mathbf t)$ is just the exterior algebra---with vanishing differentials---on 
$\OU^{\>n}$, which is isomorphic to $I/I^2$ via the natural map 
\[
\OW^{\>n}=K^{-1}(\mathbf t)\hookrightarrow I\subset\OW.)
\]
One verifies that applying the functor $H^{-n}$ to \eqref{fund1'} produces the degree~$n$ component of \eqref{fund1} (a map that  
\emph{does not depend on the choice of the generating family}~$\mathbf t$).
\va1

Next, for any $\OU$-complex $E$ with $H^e\<E=0$ for all $e<0$, the natural map $H^0E\to E$ induces a map
\begin{equation}\label{H0Hom}
H^0\>\sHom^{}_U\<(E, \OU)\lto H^0\sHom(H^0\<E,\OU)=:\sHom^0(H^0\<E,\OU).
\end{equation}
Hence for any integer~$n$ and any $\OU$-complex $F$ such that $H^e\<F=0$ for all~$e<-n$, one has the natural composite\va3
\begin{equation}
 \begin{aligned}\label{H and Hom}
H^{n^{\mathstrut}}\>\sHom^{}_U\<(F, \OU)&\iso H^0\>\sHom^{}_U\<(F, \OU)[n\>]\\
&\iso H^0\>\sHom^{}_U\<(F[-n\>], \OU)\mkern-12mu\\
&\underset{\eqref{H0Hom}}\lto \sHom^0(H^0(F[-n\>]),\OU)
\iso \sHom^0(H^{-n}F,\OU).
 \end{aligned}
\end{equation}
(The second 
map---whose source and~target are equal---is  multiplication by $(-1)^{n(n+1)/2}\>$: replace $\OU$ by a quasi-isomorphic injective complex, and take $p=0$, \mbox{$m=-n$} in the expression\va1 
$(-1)^{pm +m(m-1)/2}$ after \cite[p.\,11, (1.3.8)]{C1}.)\looseness=-1
\va1

Now if the closed immersion $\delta$ is \emph{regular, of codimension} $d$, i.e., 
$I$ is generated locally by regular sequences $\mathbf t$ of length~$d\>$, so that 
$\delta_{\<*}\OU$ is locally resolved by free complexes of the form $K(\mathbf t)$, then there results the
sequence of \emph{isomorphisms}
\begin{equation}
 \begin{aligned}\label{homologize}
 H^d\delta^*\<\delta_{\<*}\delta^!\OW
&\underset{\eqref{d_f}}\iso H^d\delta^*\sHom^{}_W\<(\delta_{\<*}\OU,\OW) \\[-2pt]
&\mkern-2.5mu\underset{\eqref{f^*Hom}}\iso \<H^d\>\sHom^{}_U\<(\delta^*\<\delta_{\<*}\OU, \OU)\\
&\,\underset{\eqref{H and Hom}}\iso\, \sHom^0_U\<(H^{-d}\>\delta^*\<\delta_{\<*}\OU,\OU)\\[-2pt]
&\,\underset{\eqref{fund1}}\iso\, \sHom^0_U\<(\lift1.7,{\bigwedge},\lift2,{\<\<d},(I\</\<I^{\mkern.5mu\sst2}),\OU).\\[2pt]
 \end{aligned}
\end{equation}
In particular, there is a canonical \emph{$\OU$-isomorphism}
\begin{equation*}
H^d\delta^*\<\delta_{\<*}\delta^!\OW
\iso \sHom^0_U\<\<\big(\lift1.7,{\bigwedge},\lift2,{\<\<d},\>(I\</\<I^{\mkern.5mu\sst2}),\OU\big) 
=:\nu^{}_{\delta}.
\end{equation*}
Moreover, since
$\delta_{\<*}\delta^!\OW\cong\sHom^{}_W(\delta_{\<*}\OU,\OW)$ has nonvanishing homology only in degree $d$, 
therefore the same holds for $\delta^!\OW$, whence the natural maps are isomorphisms
\[
\delta^!\OW[d\>]\iso H^d\delta^!\OW\osi H^d\delta^*\<\delta_{\<*}\delta^!\OW.
\]

Thus, \emph{when\/ $\delta$ is a regular immersion there is a canonical isomorphism}
\begin{equation}\label{fund3}
\boxed{\delta^!\OW\iso \nu^{}_\delta[-d\>].}
\end{equation}

\begin{small}
It is left to the interested reader to work out the precise relationship of~\eqref{fund3} to the similar isomorphisms in \cite[p.\,180, Corollary 7.3]{RD} and \cite[\S2.5]{C1}.\par
\end{small}

\smallskip
Now in \eqref{graph}, if $fi$ is essentially smooth of relative dimension $d$ then $\delta_U\colon U\to W\set U\times_{\<Z}U$
is a regular immersion and $\Omega^{\>d}_{\<fi\>}$ is locally free of rank one (see \S\ref{various}), so  the natural map  
$\sHom^0_U(\Omega^{\>d}_{\<fi\>},\>\OU)\iso\sHom^{}_U(\Omega^{\>d}_{\<fi\>},\>\OU)$
is an isomorphism. Thus,  
there are natural isomorphisms, with $Y\set U\times_ZX$,
\begin{equation}\label{def-fund2}
 \begin{aligned}
 g^!\CO_Y&\iso \delta_U^!(\id_U\!\times\>\>i)^!\CO_Y=\delta_U^!(\id_U\!\times\>\>i)^*\CO_Y
 =\delta_U^!\OW\\
 &\underset{\eqref{fund3}}\iso 
 \sHom^0_U\<(\lift1.7,{\bigwedge},\lift2,{\<\<d},(I\</\<I^{\mkern.5mu\sst2}),\OU)[-d\>]
 =\:\sHom^{}_U(\Omega^{\>d}_{\<fi\>},\>\OU)[-d\>].
 \end{aligned}
\end{equation}
The isomorphism \eqref{fund2} is defined to be the resulting composition.

\end{cosa}

\begin{cosa}

Though it is not \emph{a priori} clear,\va{.6}  $v^{}_{\<\<f\<,\>i}$ depends only on the map $fi$ 
and not on its factorization into $f$ and $i$. (In \cite{V}, $f$ is assumed proper.)\va{.6}
Indeed:

\begin{subprop} For\/ $U\xto{i\mkern1.5mu}X\xto{f} Z$ as in\/ \textup{\S\ref{V's thm},} if\/ $fi$ is essentially smooth then\/ 
$v^{}_{\<\<f\<\<,\>i}=v^{}_{\<\<fi,\>\>\id_U}.$
\end{subprop}

\begin{proof} There is 
a commutative $\sE$\kf-diagram, with $i$, $f$, $p^{}_1$, $p^{}_2$ and~$g$ as in~\S\ref{V's thm},
$\delta$~the diagonal map, $q^{}_1$, $q^{}_2$  the canonical projections, and $\spadesuit$, 
$\clubsuit$ and $\heartsuit$ labeling the front, top and rear faces of the cube,
respectively. (These three faces are fiber squares; and since $fi$ is flat therefore so are $p^{}_2$ and~$q^{}_2\>$.)

\[\mkern-45mu  
    \begin{tikzpicture}[xscale=1.065,yscale=.9
    ]
      \draw[white] (0cm,5.2cm) -- +(0: \linewidth)
      node (12) [black, pos = 0.2] {$U$}
      node (13) [black, pos = 0.45] {$W\set U\times_{\<Z} U$}
      node (16) [black, pos = 0.73] {$U$};
      \draw[white] (0cm,3.7cm) -- +(0: \linewidth)
      node (21) [black, pos = 0.27] {$Y\set U\times_{\<Z} X$}
      node (24) [black, pos = 0.55] {$X$}
      node (25) [black, pos = 0.6] {$\lift -.4,\displaystyle\heartsuit,$};

      \draw[white] (0cm,2cm) -- +(0: \linewidth)
      node (33) [black, pos = 0.45] {$U$}
      node (36) [black, pos = 0.73] {$Z$};
      \draw[white] (0cm,0.5cm) -- +(0: \linewidth)
      node (41) [black, pos = 0.27] {$U$}
      node (44) [black, pos = 0.55] {$Z$};
      
      \node (C) at (intersection of 13--33 and 21--24) { };
      \node (D) at (intersection of 33--36 and 24--44) { };
 
      \draw [->] (12) -- (13) node[auto, midway, scale=0.75]{$\delta$}; 
      \draw [->] (12) -- (21) node[left, midway, scale=0.75]{$g\mkern7mu$};    
      \draw [->] (21) -- (24) node[above, midway, scale=0.75]{$\mkern70mu p^{}_2$};
      \draw [->] (13) -- (16) node[auto, midway, scale=0.75]{$\mkern60mu q^{}_2$};
      \draw [-]  (33) -- (D)  node[auto, midway, scale=0.75]{ };
      \draw [->] (D)  -- (36) node[auto, near start, swap, scale=0.75]{$fi$};
      \draw [->] (41) -- (44) node[auto, swap, midway, scale=0.75]{$fi$};
      \draw [<-] (21) -- (13) node[above=-3.5pt, midway, scale=0.75]{\rotatebox{33}{$\mkern-30muj\set\<\id\<\<\times i\mkern-10mu$}};
      \draw [<-] (24) -- (16) node[auto, midway, scale=0.75]{$i\mkern-7mu$};
      \draw [double distance=2pt] (41) -- (33) ;
      \draw [double distance=2pt] (44) -- (36);
      \draw [<-] (41) -- (21) node[auto, midway, scale=0.75]{$p^{}_1$};
      \draw [-]  (C)  -- (13) node[auto, midway, scale=0.75]{ };
      \draw [<-] (33) -- (C)  node[auto, midway, scale=0.75]{$q^{}_1$};
      \draw [->] (24) -- (44) node[auto, midway, scale=0.75]{\raisebox{35pt}{$f$}};
      \draw [->] (16) -- (36) node[auto, midway, scale=0.75]{$fi$};
      
       \node at (6.2,4.5) [scale=.9]{$\clubsuit$};
       \node at (5.1,2.4) [scale=.9]{$\spadesuit$};
    \end{tikzpicture}
\]

Note that since $i$ is essentially \'etale therefore $i^!=i^*$ and $j^!=j^*\<$.

There is, as in \eqref{def-fund2}, a natural composite isomorphism
\[
\xi\colon g^!\OY\iso \delta^!j^!\OY =  \delta^!j^*\OY\iso\delta^!\OW.
\]

A detailed examination of the definition of $v^{}_{\<\<f\<,\>i}$ in \eqref{defv}, taking into account the definition \eqref{def-fund2} of \eqref{fund2}, shows that 
it will suffice to prove commutativity of the border of the following natural functorial diagram.\va2

\begin{equation*}\label{bigfig}
\mkern-5mu
\def\1{$g^!\OY\<\otimes (fi)^!$}
\def\2{$g^!\OY\<\otimes i^*\mkern-2.5mu f^!$}
\def\3{$g^!\OY\<\<\otimes\< g^*p^*_2f^!$}
\def\4{$g^!p^*_2f^!$}
\def\5{$g^!p^!_1(fi)^*$}
\def\6{$(fi)^*$}
\def\7{$\delta^!\OW\<\otimes (fi)^!$}
\def\8{$\delta^!\OW\<\otimes \delta^*\<q_2^*(fi)^!$}
\def\9{$\delta^!q_2^*(fi)^!$}
\def\ten{$\delta^!q_1^!(fi)^*$}
\def\lvn{$\delta^!j^!\OY\<\otimes i^*\mkern-2.5mu f^!$}
\def\twv{$\delta^!j^*\OY\<\otimes i^*\mkern-2.5mu f^!$}
\def\thn{$\delta^!j^!\OY\<\<\otimes\< \delta^*\<j^*p_2^*f^!$}
\def\frn{$\delta^!\OW\<\otimes \delta^*\<q_2^*i^*\mkern-2.5mu f^!$}
\def\ffn{$\delta^!(j^!\OY\otimes j^*p_2^*f^!)$}
\def\sxn{$\delta^!(\OW\otimes q_2^*i^*\mkern-2.5mu f^!)$}
\def\svn{$\delta^!j^!p^*_2f^!=\delta^!j^*p^*_2f^!$}
\def\egn{$\delta^!q_2^*i^*\mkern-2.5mu f^!$}
\def\ntn{$\delta^!j^!p^!_1(fi)^*$}
\def\twy{$\delta^!(p^{}_1j)^!(fi)^*$}
\def\twn{$\delta^!(\OW\<\otimes q_2^*(fi)^!\>)$}
\def\twt{$\delta^!(\OW\otimes j^*p_2^*f^!)$}
\def\tth{$g^!(\OY\<\<\otimes\< g^*p^*_2f^!)$}
 \bpic[scale=.89, xscale=3.75,yscale=1.75]
 
  \node(11) at (1,-1.23){\1};
  \node(14) at (4,-1.29){\7};
  
  \node(21) at (1,-2){\2};
  \node(22) at (2.12,-2){\lvn};
  \node(23) at (3.28,-2){\twv};

  \node(31) at (1,-3.17){\3};
  \node(32) at (2.12,-3.17){\thn};
  \node(33) at (3.28,-3.17){\frn};
  \node(34) at (4,-2.43){\8};

  \node(41) at (1,-4.35){\tth};
  \node(42) at (2.12,-4.35){\ffn};
  \node(425) at (2.7, -5.1){\twt};
  \node(43) at (3.28,-4.35){\sxn};
  \node(44) at (4,-3.62){\twn};
        
  \node(51) at (1,-5.8){\4};
  \node(52) at (2.12,-5.8){\svn};
  \node(53) at (3.28,-5.8){\egn};
  \node(54) at (4,-5.8){\9};
  
  \node(61) at (1,-6.8){\5};
  \node(62) at (2.12,-6.8){\ntn};
  \node(63) at (3.22,-6.8){\twy};
  \node(64) at (4,-6.8){\ten};

  \node(72) at (2.5,-7.7){\6};

   \draw[->] (11) -- (14) node[above=1pt, midway, scale=0.75]{$\via\>\xi$};
   
   \draw[->] (21) -- (22) ; 
   \draw[double distance = 2pt] (22) -- (23) ;

   \draw[->] (31) -- (32) ;
   \draw[->] (32) -- (33) ;
  
   \draw[->] (42) -- (43) ;

   \draw[->] (51) -- (52) ;
   \draw[->] (52) -- (53) ;
   \draw[->] (53) -- (54) ;
   
   \draw[->] (61) -- (62) ;   
   \draw[->] (62) -- (63) ;
   \draw[double distance = 2pt] (63) -- (64) ;  
   
   \draw[->] (11) -- (21) node[left=1pt, midway, scale=0.75]{$\simeq$};   
   \draw[->] (21) -- (31) node[left=1pt, midway, scale=0.75]{$\simeq$};
   \draw[->] (31) -- (41) node[left=1pt, midway, scale=0.75]{$\chi^{}_{g}$}; 
   \draw[double distance = 2pt] (41) -- (51);
   \draw[->] (51) -- (61) node[left, midway, scale=0.75]{$g^!\<\beta_{\spadesuit}$}; 

   \draw[->] (32) -- (42) node[left=.5pt, midway, scale=0.75]{$\chi^{}_{\delta}$};
   \draw[->] (42) -- (52) node[left=.5pt, midway, scale=0.75]{$\delta^!\chi^{}_{\<j}$};
   \draw[->] (52) -- (62) node[left, midway, scale=0.75]{$\via \beta_{\spadesuit}$}; 

   \draw[->] (23) -- (33) node[right=1pt, midway, scale=0.75]{$\simeq$}; 
   \draw[->] (33) -- (43) node[right=1pt, midway, scale=0.75]{$\chi^{}_{\delta}$}; 
   \draw[double distance=2pt] (43) -- (53) ; 
     
   \draw[->] (14) -- (34) node[right=1pt, midway, scale=0.75]{$\simeq$};   
   \draw[->] (34) -- (44) node[right=1pt, midway, scale=0.75]{$\chi^{}_{\delta}$};
   \draw[double distance=2pt] (44) -- (54) ; 
   \draw[->] (54) -- (64) node[right=1pt, midway, scale=0.75]{$\delta^!\<\beta_{\>\heartsuit}$}; 

   \draw[<-] (14) -- (23);  
   \draw[->] (33) -- (34) ; 
   
    \draw[->] (3.48, -4.18) -- (3.76,-3.85);
   
   \draw[->] (61) -- (72) node[above=-8pt, midway, scale=0.75]{$\simeq\mkern40mu$}; 
   \draw[->] (64) -- (72) node[above=-8pt, midway, scale=0.75]{$\mkern40mu\simeq$};  
  
   \draw[->] (42) -- (425) ;
   \draw[<-] (43) -- (425) ;
   \draw[double distance = 2pt] (52) -- (425) ;
   
   \node at (2.12,-2.6) {\circled1};
   \node at (1.545,-3.79) {\circled2};
   \node at (2.24,-4.83) {\circled3};   
   \node at (3.12,-6.35){\circled4};
   \node at (2.5,-7.25) {\circled5};

  \epic
\end{equation*}

Commutativity of subdiagram \circled1 (resp.~\circled5) results from the pseudofunctoriality
of~$(-)^*$ (resp.~$(-)^!\>\>$).\va1

Commutativity of \circled2 is given by transitivity of $\chi$ with respect to composition of maps, 
see \cite[13.4]{Nm14} (or~\cite[Exercises 4.7.3.4(d) and 4.9.3(d)]{li}).\va1

Commutativity of \circled3 follows from the fact that for any $F\in\Dqcpl(Y)$, the following natural diagram commutes:\va3
\[
 \bpic[scale=.89, xscale=3.75,yscale=1.9]
  \node(11) at (1,-1){$j^!\OY\otimes j^*\<\<F$};
  \node(12) at (2,-1){$j^*\<\<\OY\otimes j^*\<\<F$};
  \node(13) at (2.93,-1){$\OW\otimes  j^*\<\<F$};
  
  \node(21) at (1,-2){$j^!(\OY\otimes F\>)$};
  \node(22) at (2,-2){$j^*(\OY\otimes F\>)$};
  \node(23) at (2.93,-2){$ j^*\<\<F$};
  
  \draw[double distance = 2pt] (11) -- (12);
  \draw[double distance = 2pt] (21) -- (22);
  \draw[->] (12)--(13) node[above, midway, scale=0.75]{$\Iso$};
  \draw[double distance = 2pt] (22)--(23) ;
  
  \draw[->] (11)--(21) node[left=1pt, midway, scale=0.75]{$\chi^{}_{j}$};
  \draw[->] (12) --(22) node[right=1pt, midway, scale=0.75]{$\simeq$};
  \draw[double distance = 2pt] (13) -- (23);
  
  \node at (1.5,-1.54)[scale=.9]{\circled3$^{}_{\<1}$};
  \node at (2.5,-1.54)[scale=.9]{\circled3$^{}_2$};
 \epic
\]
For a sketch of the proof that \circled3$^{}_{\<1}$ commutes see \cite[4.9.2.3]{li}. As for commuta\-tivity 
of \circled3$^{}_2$, 
replacement of $F$ by a quasi-isomorphic flat complex reduces the problem to the context of ordinary (nonderived) functors, at which point the justification is left to the reader.

Commutativity of \circled4 is given by transitivity of $\beta$ with respect to juxtaposition of 
fiber squares (see \cite[Theorem 11.9]{Nm14} or \cite[Theorem 4.8.3]{li}), as applied to the following decomposition of the fiber square $\heartsuit$:
\[
\CD
\bullet @>q^{}_2>> \bullet\\
@VjV\mkern35mu\clubsuit V@VViV\\
\bullet @>p^{}_2>> \bullet\\
@Vp^{}_1V\mkern35mu\spadesuit V@VVfV\\
\bullet @>>fi> \bullet
\endCD
\]
Here one needs to use that
$\beta_\clubsuit$ is the canonical isomorphism $q_2^*i^*\iso j^*p^*_2$ (see \cite[11.4 and 11.5]{Nm14}
with $p=p'=\:$identity map in the diagram of \cite[11.4(i)]{Nm14}, or
\cite[4.8.8(i)]{li}.)

Commutativity of the remaining subdiagrams is easy to check, whence the assertion.
\end{proof}

\smallskip
Accordingly, we restrict henceforth
to the case\/ $U=X$ and $i=\id_\sX$.
The map $g\colon X\to X\otimes_Z X$ is then the diagonal.\va2
\end{cosa}

\begin{cosa}\label{unbounded v}
Again, let $f\colon X\to Z$  be an essentially smooth $\sE$\kf-map of relative dimen\-sion~$d$.
With $\chi^{}_{\<\<f}$ as in \eqref{chi},
\emph{define\/ $v^{}_{\!f}(F\>)$ for} $F\in\Dqc(Z)$ to be the composite isomorphism
\begin{equation}\label{def2v}
f^!\<\<F\xto[\lift1.3,\chi_{\!f}^{-\<1},]{\Iso} f^!\OZ\otimes_{\sX}f^*\<\<F 
\xto[\! v^{}_{\!f\<\<,\>\id_{\sX}}\!(\<\OZ\<\<)\>\otimes\>\>\id\>]{\Iso}
\Omega_{\<f}^d[d\>]\otimes_{\sX}\<f^*\<\<F.
\end{equation}

As before (just after \eqref{vf}), the identifications 
\[\Omega_{\<f}^d[d\>]\otimes_{\sX}\<f^*\OZ=\Omega_{\<f}^d[d\>]\otimes_{\sX}\<\OX
=\Omega_{\<f}^d[d\>], 
\]
allow us to consider $v^{}_{\!f}(\OZ)$ to be a map from $f^!\OZ$ to $\Omega_{\<f}^d[d\>]$.

\begin{subprop}\label{def v}
If\/ $F\in\Dqcpl(Z)$ then
$v^{}_{\<\<f}(F\>)= v^{}_{\<\<f\<,\>\>\id_{\sss \sX}}\<(F\>)$.
\end{subprop}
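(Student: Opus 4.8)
The plan is to unwind both definitions and reduce everything to one compatibility statement for the map $\vartheta$ of \eqref{first}. Since the essentially smooth map $f$ is perfect (\S\ref{perfect}), the map $\chi^{}_{\<\<f}(\OZ,F\>)\colon f^!\OZ\otimes_\sX f^*\<\<F\to f^!\<F$ is an isomorphism (\eqref{chi}); so by the definition \eqref{def2v} one has $v^{}_{\<\<f}(F\>)=\big(v^{}_{\<\<f\<,\>\>\id_{\sss \sX}}\<(\OZ)\otimes_\sX\id_{f^*\<\<F}\big)\circ\chi^{}_{\<\<f}(\OZ,F\>)^{-1}$, and Proposition~\ref{def v} becomes the equality
\[
v^{}_{\<\<f\<,\>\>\id_{\sss \sX}}\<(F\>)\circ\chi^{}_{\<\<f}(\OZ,F\>)=v^{}_{\<\<f\<,\>\>\id_{\sss \sX}}\<(\OZ)\otimes_\sX\id_{f^*\<\<F}
\]
of isomorphisms $f^!\OZ\otimes_\sX f^*\<\<F\iso\Omega^{\>d}_{\<f}[d\>]\otimes_\sX f^*\<\<F$, where $v^{}_{\<\<f\<,\>\>\id_{\sss \sX}}$ is the chain \eqref{defv} taken with $U=X$, $i=\id_\sX$, $g=\delta$ the diagonal, $fi=f$, $Y=X\times_{\<Z}X$, and the right-hand side is that same chain written for $F=\OZ$ and then tensored on the right with $\id_{f^*\<\<F}$.

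Write $L\set\Omega^{\>d}_{\<f}[d\>]$, a perfect complex (a shifted invertible sheaf). I would build a ``ladder'' between the two composites above by running the chain \eqref{defv} for $F$ alongside the chain \eqref{defv} for $\OZ$ tensored with $\id_{f^*\<\<F}$, connecting the corresponding stages by the obvious isomorphisms that carry the trailing factor $f^*\<\<F$ across the other tensor factors and then absorb it into $f^!\OZ$ via $\chi^{}_{\<\<f}(\OZ,F\>)$. Concretely: at the $\sHom_\sX(L,-)$-stage the connecting map is $\kappa_\sX(L,f^!\OZ,f^*\<\<F)$ (an isomorphism because $L$ is perfect, \eqref{tensor and Hom}) followed by $\sHom_\sX(L,-)$ applied to $\chi^{}_{\<\<f}(\OZ,F\>)$; at the next two stages it is $\id\otimes\chi^{}_{\<\<f}(\OZ,F\>)\otimes\id$ up to associativity and symmetry; and at the final stage it is the identification $\OX\otimes_\sX L\otimes_\sX f^*\<\<F=f^*\<\<F\otimes_\sX L$. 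It then suffices to check that the resulting four squares commute.

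Three of the four squares are formal. The square for the evaluation step \eqref{eval} commutes because $\kappa_\sX$ is by construction adjoint to a composite built from the evaluation map (\S\ref{maps}(h)), and both are functorial in the second variable (applied to $\chi^{}_{\<\<f}(\OZ,F\>)$). The square for step \eqref{tensor and Hom} is an instance of the transitivity of $\kappa_\sX$, Lemma~\ref{trans}, with $E_1=L$, $E_2=\OX$, $E_3=f^!\OZ$, $E_4=f^*\<\<F$, again combined with functoriality. The square for step \eqref{gamma} commutes because $\gamma$ does not involve $F$ and the connecting maps act only on the $f^!$- and $f^*$-factors, while the bottom symmetry square is trivial. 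The one square carrying content is the one for step \eqref{first}: after cancelling the untouched trailing factor $L$ and the bookkeeping identifications $f^*\OZ=\OX$, $\OZ\otimes_{\<Z}F=F$, it becomes the identity
\[
\vartheta(F\>)\circ\big(\id_{\delta^!\OY}\otimes_\sX\chi^{}_{\<\<f}(\OZ,F\>)\big)=\vartheta(\OZ)\otimes_\sX\id_{f^*\<\<F}
\]
of maps $\delta^!\OY\otimes_\sX f^!\OZ\otimes_\sX f^*\<\<F\to f^*\<\<F$.

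This last identity is, I expect, the main obstacle. To prove it I would substitute the definition \eqref{first} of $\vartheta$ in terms of $\chi^{}_{\delta}$ and $\beta_\spadesuit$, and then invoke the coherence of Neeman's $\chi$ and base-change map $\beta$ from \cite{Nm3}: the transitivity of $\chi$ along the factorization $f=\pi\circ\delta$, where $\pi\colon X\times_{\<Z}X\to Z$ is the common value of $fp^{}_1$ and $fp^{}_2$ (\cite[7.8]{Nm3}), which lets $\chi^{}_{\<\<f}(\OZ,F\>)$ merge with the $\chi^{}_{\delta}$ occurring inside $\vartheta$; and the compatibility of $\chi^{}_{\delta}$ with $\beta_\spadesuit$ built into the construction of \cite[7.7--7.10]{Nm3}, which absorbs the $\beta_\spadesuit$ occurring inside $\vartheta$. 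What remains is bookkeeping with the canonical isomorphisms $\delta^*p_2^*=\id_\sX=\delta^*p_1^*$, $p^{}_1\delta=p^{}_2\delta=\id_\sX$, and the pseudofunctoriality of $(-)^*$ and $(-)^!$. Pasting the ladder together then yields the Proposition.
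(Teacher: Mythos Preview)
Your approach is essentially the same as the paper's. Both reduce the statement to a ladder comparing the chain~\eqref{defv} for~$F$ with that for~$\OZ$ tensored with~$\id_{f^*\<\<F}$; both identify the $\kappa$-step as an instance of Lemma~\ref{trans}; and both isolate the substantive content as the compatibility
\[
\vartheta(F\>)\circ\big(\id_{\delta^!\OY}\otimes\chi^{}_{\<\<f}(\OZ,F\>)\big)=\vartheta(\OZ)\otimes\id_{f^*\<\<F},
\]
which the paper calls subdiagram~\circled4 and proves by expanding into five further subdiagrams, invoking \cite[7.8]{Nm3} for transitivity of~$\chi$ and \cite[7.9]{Nm3} for the interaction of~$\chi$ with base change~$\beta$. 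Your citation of these same results is on target, though your description of the factorization is slightly off: the transitivity actually used is for $p^{}_1\delta=\id_\sX$ (relating $\chi^{}_\delta$, $\chi^{}_{p^{}_1}$, and $\chi^{}_{\id}$), not for $f=\pi\delta$, and the $\chi$--$\beta$ compatibility is applied to the fiber square~$\clubsuit$ with $\chi^{}_{p^{}_1}$ rather than directly to~$\chi^{}_\delta$. The ``bookkeeping'' you allude to is genuinely nontrivial---the paper spends a full page on it---but your outline is sound.
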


\begin{proof}
Let $g\colon X\to Y\set X\times_{\<Z}X$ be the diagonal map. 
Set $\omega\set\Omega_{\<f}^d[d\>]$ (a perfect complex), and set
$[A,B]\set \sHom_X(A,B)\ (A,B\in\D(X))$. Let $\kappa=\kappa_X(\omega,-,-)$  be as in~\eqref{tensor and Hom},  $\vartheta$ as in \eqref{first} and  $\gamma$ as in \eqref{gamma} (the~last two with $i\set\id_\sX$). Since $f$ is essentially smooth, all of these maps are isomorphisms.
By definition (see \eqref{defv}), Proposition~\ref{unbounded v} says that the border of the following diagram  (in which $\otimes\set\otimes_\sX$) commutes:

\[\mkern-5mu
\def\1{$f^!\<F$}
\def\2{$f^!\OZ\<\<\otimes\< f^*\<\<F$}
\def\3{$\omega\<\otimes\< f^*\<\<F$}
\def\4{$[\omega, f^!\<F]\<\otimes\< \omega$}
\def\5{$[\omega, f^!\OZ]\<\otimes\< \omega\<\otimes\< f^*\<\<F\mkern10mu$}
\def\6{$f^*\OZ\<\otimes\< \omega\<\otimes\< f^*\<\<F$}
\def\7{$[\omega, f^!\OZ\<\otimes\< f^*\<\<F\>]\<\otimes\< \omega$}
\def\8{$g^!\OY\<\otimes\< f^!\OZ\<\otimes\< \omega\<\otimes\< f^*\<\<F\mkern10mu$}
\def\9{$[\omega,\OX]\<\otimes\< f^!\OZ\<\otimes\< \omega\<\otimes\< f^*\<\<F$}
\def\ten{$[\omega,\OX]\<\otimes\< f^!\<F\<\otimes\< \omega$}
\def\lvn{$\mkern20mu[\omega,\<\OX]\<\<\otimes\! f^!\OZ\<\<\otimes\! f^*\<\<F\<\<\otimes\< \omega$}
\def\twv{$f^*\OZ\<\otimes\< f^*\<\<F\<\otimes\< \omega$}
\def\thn{$f^*\<\<F\<\otimes\< \omega$}
\def\frn{$g^!\OY\<\otimes\< f^!\OZ\<\otimes\< f^*\<\<F\<\otimes\< \omega\mkern10mu$}
\def\ffn{$g^!\OY\<\<\otimes\<\< f^!\<F\<\otimes\< \omega$}
\def\sxn{$\mkern16mu[\omega\<,\< f^!\OZ]\<\<\otimes\!f^*\<\<F\<\<\otimes\< \omega $}
 \bpic[scale=.75, xscale=4.4,yscale=1.65]
 
  \node(11) at (1.6,-.65){\2};
  \node(12) at (4,-.65){\2};
  \node(14) at (4,-3){\3};

  \node(20) at (1,-1.65){\1};  
  \node(21) at (1.6,-3){\7};
  \node(22) at (2.65,-2.2){\5};
  \node(23) at (3.35,-1.5){\8};

  \node(30) at (2.07,-3.95){\sxn};  
  \node(31) at (1,-3.95){\4};
  \node(33) at (3.35,-3.95){\6};

  \node(41) at (2.65,-4.8){\9};
  \node(42) at (3.35,-5.75){\twv};
  \node(43) at (4,-4.8){\thn};
    
  \node(51) at (1,-7.65){\ten};  
  \node(52) at (1.6,-6.7){\lvn};
  \node(53) at (3.35,-6.7){\frn};
  \node(54) at (4,-7.65){\ffn};
 
   \draw[double distance=2pt] (11) -- (12) node[above=-.5pt, midway, scale=0.75]{$$};
   \draw[->] (51) -- (54) node[below=.5pt, midway, scale=0.75]{$\gamma^{-\<1}$};     
   \draw[->] (52) -- (53) node[above, midway, scale=0.75]{$\gamma^{-\<1}$};   
   \draw[->] (53) -- (54) node[above=-9pt, midway, scale=0.75]{$\chi^{}_{\<\<f}\mkern40mu$};

   \draw[->] (11) -- (20) node[above=-11pt, midway, scale=0.75]{$\mkern25mu\chi^{}_{\<\<f}$};
   \draw[->] (20) -- (31) node[left=1pt, midway, scale=0.75]{$\simeq$};   
   \draw[->] (31) -- (51) node[left, midway, scale=0.75]{$\kappa^{-\<1}$};
   \draw[<-] (51) -- (52) node[above=-9pt, midway, scale=0.75]{$\mkern45mu\chi^{}_{\<\<f}$};    
   
   \draw[->] (11) -- (22) node[above=-.5pt, midway, scale=0.75]{$\mkern15mu\simeq$};   
   \draw[->] (22) -- (41) node[left, midway, scale=0.75]{$\kappa^{-\<1}$}; 
   \draw[->] (41) -- (52) node[right=1pt, midway, scale=0.75]{$\mkern5mu\simeq$}; 
    
   \draw[->] (41) -- (23) node[right=.5pt, midway, scale=0.75]{$\gamma^{-\<1}$}; 
   
   \draw[->] (23) -- (33) node[left, midway, scale=0.75]{$\vartheta$};
      
   \draw[->] (21) -- (52) node[left, midway, scale=0.75]{$\kappa^{-\<1}$};
   
   \draw[<-] (33) -- (42) node[right=.5pt, midway, scale=0.75]{$\simeq$};
   \draw[<-] (42) -- (53) node[left, midway, scale=0.75]{$\vartheta$} ;  

   \draw[->] (12) -- (14) node[left, midway, scale=0.75]
       {$v^{}_{\!f\<\<,\>\id_{\sX}}\!(\<\OZ\<\<)\<\<$}
       node[right, midway, scale=0.75]{$\<\otimes\<\id_{\mathstrut}$}; 
   \draw[<-] (14) -- (43) node[left=1pt, midway, scale=0.75]{$\simeq$};   
   \draw[<-] (43) -- (54) node[right=1pt, midway, scale=0.75]{$\vartheta$} ; 
 
  \draw[<-] (31) -- (21)  node[above=-11pt, midway, scale=0.75]{$\mkern25mu\chi^{}_{\<\<f}$};   
 
  \draw[<-] (21) -- (11) node[above, midway, scale=0.75]{$\mkern25mu\simeq$};

  \draw[<-] (30) -- (22) node[above=-2pt, midway, scale=0.75]{$\simeq\mkern25mu$}; 
  \draw[->] (30) -- (21) node[above=-4pt, midway, scale=0.75]{$\mkern30mu\kappa$}; 
  \draw[->] (2.02,-4.18) -- (1.665,-6.46)  node[right, midway, scale=0.75]{$\kappa^{-\<1}$};
    
  \draw[double distance=2pt] (42) -- (43) ; 
  
  \draw[double distance=2pt] (14) -- (33) ;     

   \node at (2.65,-1.1) {\circled1};
   \node at (1.9,-2) {\circled2};
   \node at (1.8,-4.5) {\circled3};
   \node at (3.7,-6.25) {\circled4};
 
  \epic
\]

Commutativity of the unlabeled subdiagrams is straightforward to verify.

Commutativity of \circled1 follows from the definition \eqref{defv} of 
$v^{}_{\<\<f\<,\>\>\id_{\sss \sX}}\<(\OZ)$.

Commutativity of \circled2 is essentially the definition of the map $\kappa$.

Commutativity of \circled3 results from Lemma \ref{trans}.

For \circled4, it's enough to have commutativity of the functorial diagram
\[
\CD
g^!\OY\otimes_\sX\< f^!\<\OZ\otimes_\sX\< f^*\<\<@>{\<\<\via\>\chi^{}_{\<\<f}\>\>}>>g^!\OY\otimes_\sX f^!\<\<\\[-5pt]
@V\via\vartheta\< VV  @VV\vartheta V\\
f^*\OZ\otimes_\sX\< f^*\<\< @= f^*\<,\endCD
\]
that expands to the border of the next natural diagram, 
in which the omitted subscripts
on $\otimes$ symbols are the obvious ones, and, with reference to the standard fiber square
\begin{equation*}
 \CD
X\times_{\<Z} X@>p^{}_2>> X\\
@Vp^{}_1VV @VVfV \\
X @>\lift4.8,\displaystyle\diamondsuit,>f> \,Z\>,
 \endCD
\end{equation*} $\beta_{\diamondsuit}$ is as in \eqref{bch}:
\[\mkern-5mu
\def\1{$g^!\OY\<\otimes f^!\OZ\<\otimes f^*$}
\def\2{$g^!\OY\<\otimes f^!$}
\def\3{$g^!\OY\<\<\otimes\< g^*p^*_2f^!\OZ\<\<\otimes\< g^*p^*_2f^*$}
\def\4{$g^!\OY\<\<\otimes \<g^*p^*_2(f^!\OZ\<\otimes\<\<f^*\<)$}
\def\5{$g^!\OY\<\<\otimes\< g^*p^*_2f^!$}
\def\6{$g^!\OY\<\<\otimes\< g^*\<(p^*_2f^!\OZ\<\<\otimes\< p^*_2f^*\<)$}
\def\7{$g^!\OY\<\<\otimes g^*p^!_1f^*\<\OZ\<\<\otimes\< g^*p^*_2f^*$}
\def\9{$g^!\OY\<\otimes g^*p^!_1f^*$}
\def\ten{$g^!\OY\<\<\otimes\<g^*\<(p^!_1f^*\<\OZ\<\otimes p^*_1f^*\<)$}
\def\twv{$g^!p^!_1f^*\OZ\<\otimes g^*p^*_1f^*$}
\def\thn{$g^!(p^!_1f^*\OZ\<\otimes p^*_1f^*)$}
\def\frn{$g^!p^!_1f^*$}
\def\ffn{$f^*\OZ\<\otimes f^*$}
\def\sxn{$(p^{}_1g)^!\<\<f^*\<\OZ\<\otimes\< (p^{}_1g)^*\<\<f^*$}
\def\svn{$(p^{}_1g)^!\<(f^*\OZ\<\otimes\<\< f^*)$}
\def\egn{$(p^{}_1g)^!f^*$}
\def\ntn{$f^*$}
 \bpic[scale=.89, xscale=3.43,yscale=1.75]
 
  \node(11) at (1,-1){\1};
  \node(14) at (4,-1){\2};
  
  \node(23) at (2.7,-2.125){\4};
  \node(24) at (4,-2.125){\5};

  \node(31) at (1,-3.25){\3};
  \node(33) at (2.7,-3.25){\6};

  \node(41) at (1,-4.375){\7};
  \node(43) at (2.7,-4.375){\ten};
  
  \node(54) at (4,-3.75){\9};
    
  \node(61) at (1,-5.5){\twv};
  \node(63) at (2.7,-5.5){\thn};
  \node(64) at (4,-5.5){\frn};

  \node(71) at (1,-6.625){\sxn};
  \node(73) at (2.68,-6.625){\svn};
  \node(74) at (4,-6.625){\egn};

  \node(81) at (1,-7.75){\ffn};
  \node(84) at (4,-7.75){\ntn};
 
   \draw[->] (11) -- (14) node[above, midway, scale=0.75]{$\via\chi^{}_{\<\<f}$};
   
   \draw[->] (23) -- (24) node[above=2pt, midway, scale=0.75]{$\via\chi^{}_{\<\<f}$};

   \draw[->] (31) -- (33) node[above=-.5pt, midway, scale=0.75]{$\Iso$};
  
   \draw[->] (41) -- (43) node[above=-.5pt, midway, scale=0.75]{$\Iso$};
   \draw[->] (43) -- (54) node[below=-2pt, midway, scale=0.75]{$\mkern60mu\via\chi^{}_{p^{}_1}$};

   \draw[->] (61) -- (63) node[below=1pt, midway, scale=0.75]{$\!\chi^{}_{g}$};   
   \draw[->] (63) -- (64) node[below=1pt, midway, scale=0.75]{$\via\chi^{}_{p^{}_1}$};
   
   \draw[->] (71) -- (73) node[above, midway, scale=0.75]{$\!\chi^{}_{p^{}_1g}$};
   \draw[double distance=2pt] (73) -- (74) ;

   \draw[double distance=2pt] (81) -- (84) ;

   \draw[->] (11) -- (31) node[left=1pt, midway, scale=0.75]{$\simeq$};   
   \draw[->] (31) -- (41) node[left=1pt, midway, scale=0.75]{$\via\beta_{\>\diamondsuit}$}; 
    \draw[->] (41) -- (61) node[left=1pt, midway, scale=0.75]{$\via\chi^{}_{g}$};    
   \draw[->] (61) -- (71) node[left=1pt, midway, scale=0.75]{$\simeq$}; 
   \draw[->] (71) -- (81) node[left=1pt, midway, scale=0.75]{$\simeq$};;   

   \draw[->] (23) -- (33) node[right=1pt, midway, scale=0.75]{$\simeq$};   
   \draw[->] (33) -- (43) node[right=.5pt, midway, scale=0.75]{$\via\beta_{\>\diamondsuit}$};
   \draw[->] (43) -- (63) node[right=1pt, midway, scale=0.75]{$\via \chi^{}_{g}$}; 
   \draw[->] (14) -- (24) node[right=1pt, midway, scale=0.75]{$\simeq$};   
   \draw[->] (24) -- (54) node[right=1pt, midway, scale=0.75]{$\via\beta_{\>\diamondsuit}$}; 
   \draw[->] (54) -- (64) node[right=1pt, midway, scale=0.75]{$\chi^{}_{g}$}; 
   \draw[->] (64) -- (74) node[right=1pt, midway, scale=0.75]{$\simeq$}; 
   \draw[->] (74) -- (84) node[right=1pt, midway, scale=0.75]{$\simeq$}; 

  \draw[->] (11) -- (23) node[above=.5pt, midway, scale=0.75]{$\simeq$};     
  
  \draw[->] (81) -- (73) node[above=.4pt, midway, scale=0.75]{$\simeq$};   
  
   \node at (1.55,-2.125) {\circled5};
   \node at (3.53,-2.95) {\circled6};
   \node at (1.84,-5) {\circled7};
   \node at (2.68,-6.12){\circled8};
   \node at (1.55,-7.08) {\circled9};

  \epic
\]

For commutativity of subdiagram \circled5, see the last two paragraphs of \S3.6 in \cite{li}.

Commutativity of \circled6 results from \cite[13.7]{Nm14} with
applied to the diagram $\diamondsuit$ (cf.~\cite[4.9.3(c)]{li}.)

Commutativity of \circled7, \circled8 and \circled9  are left mostly to the reader (cf.~\cite[Exercises 4.7.3.4(a), (d) and (b)]{li},
as modified in \cite[4.9.3(d)]{li}, describing the behavior 
of~$\chi$ vis-\`a-vis associativity of tensor product, composition of maps, and identity maps, respectively.) 
For more details on \circled8, see 
\cite[13.4]{Nm14}.\looseness=-1

Commutativity of the remaining subdiagrams is easy to check, whence the assertion.
\end{proof}
\end{cosa}
 
 \pagebreak[3]
\begin{subrem}[Base change]\label{base change c} Let there be given a fiber square in\/ $\sE$\va{1.5}
\begin{equation*}
\CD
X'@>g'>> X\\
@Vf'V\mkern 20mu V @VVfV\\ 
Z' @>>\lift1.2, g,> Z\\[3.5pt]
\endCD
\end{equation*}
in which\/ $f$ (and hence $f')$ is essentially smooth of relative dimension\/~$d.$ 

\noindent The~isomorphism $v^{}_{\<\<f}(\OZ)$ induces an $\OX$-isomorphism 
\[
\omega^{}_{\<\<f}\set H^{-d}f^!\OZ\iso H^{-d}\big(\Omega^d_f[d]\big)= \Omega^d_f\>,
\]
and similarly for $f'$. The resulting composite isomorphism\va2
\[
g'{}^* \omega^{}_{\<\<f}
\iso
g'{}^*\Omega^d_f 
\iso
\Omega^d_{f'}  
\iso
\omega^{}_{\<\<f'},
\]
\vskip2pt
\noindent is discussed in ~\cite[p.\,740, Theorem 2.3.5]{Sa}.
\end{subrem}

 The next Proposition expresses compatibility between $v^{}_{\!f}$
and (derived) tensor product.

\begin{subprop}\label{v and otimes} 
For any\/ $F_1,$ $F_2\in\Dqc(Z),$ the following diagram, 
where $\otimes\set\otimes_\sX,$ commutes.\va1
\[
\CD
f^!\<F_1\otimes\< f^*\<\<F_2 @>\!v^{}_{\<\<f}(F_1)\>\otimes\> \id\>\> >>   \Omega_{\<f}^d[d\>]\otimes\<f^*\<\<F_1\otimes\<f^*\<\< F_2\\
@V \chi^{}_{\<\<f}(F_1\<,\>\>F_2) VV  @VV  \textup{natural}\, V\\
 f^!(F_1\otimes F_2)  @>>v^{}_{\<\<f}(F_1\otimes F_2)\>   >
    \Omega_{\<f}^d[d\>] \otimes\<f^*\<(F_1\otimes F_2). \\[4pt]
 \endCD
\]

\end{subprop}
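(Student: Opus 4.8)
The plan is to reduce the assertion, via the defining formula~\eqref{def2v} for $v^{}_{\<\<f}$, to the transitivity of the bifunctorial map $\chi^{}_{\<\<f}$ of~\eqref{chi} with respect to the tensor product. Write $\bar v\colon f^!\OZ\iso\Omega_{\<f}^d[d\>]$ for the isomorphism $v^{}_{\<\<f\<,\>\>\id_\sX}\<(\OZ)$ taken under the identification $f^*\OZ=\OX$. Then~\eqref{def2v} says precisely that, for every $F\in\Dqc(Z),$
\[
v^{}_{\<\<f}(F\>)=(\bar v\otimes\id_{f^*\<\<F})\circ\chi^{}_{\<\<f}(\OZ,F\>)^{-\<1},
\]
modulo the canonical identifications $\OZ\otimes F=F$ and $\Omega_{\<f}^d[d\>]\otimes f^*\OZ\otimes f^*\<\<F=\Omega_{\<f}^d[d\>]\otimes f^*\<\<F$. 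I would substitute this for $v^{}_{\<\<f}(F_1)$ along the top edge of the square and for $v^{}_{\<\<f}(F_1\otimes F_2)$ along the bottom edge. Then, using the interchange law for $\otimes$ to slide the isomorphism $\bar v\otimes\id$ past the unlabeled ``natural'' arrow, the factor $\bar v\otimes\id_{f^*(F_1\otimes F_2)}$ appears in both composites of the square, and being an isomorphism it may be cancelled; in particular the sign $(-1)^{d(d+1)/2}$ concealed in $\bar v$ plays no role here.

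What is then left to prove is the commutativity of the following square (in which $\otimes$ is $\otimes_\sX$, except for the innermost $\otimes$ inside $f^*(F_1\otimes F_2)$, which is $\otimes_Z$):
\[
\CD
f^!\OZ\otimes f^*\<\<F_1\otimes f^*\<\<F_2 @>\chi^{}_{\<\<f}(\OZ,F_1)\>\otimes\>\id>> f^!\<F_1\otimes f^*\<\<F_2\\
@V\textup{natural}VV @VV\chi^{}_{\<\<f}(F_1,F_2)V\\
f^!\OZ\otimes f^*(F_1\otimes F_2) @>\chi^{}_{\<\<f}(\OZ,\>F_1\otimes F_2)>> f^!(F_1\otimes F_2)
\endCD
\]
where the left vertical arrow is $\id_{f^!\OZ}$ tensored with the natural isomorphism $f^*\<\<F_1\otimes f^*\<\<F_2\iso f^*(F_1\otimes F_2)$ inverse to~\eqref{* and otimes}. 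This is exactly the case $F=\OZ,$ $G=F_1,$ $H=F_2$ of the transitivity of $\chi^{}_{\<\<f}$ with respect to the associativity of $\otimes$ --- \cite[7.9]{Nm3}, equivalently \cite[Exercise 4.7.3.4(a)]{li} --- combined with the functoriality of $\chi^{}_{\<\<f}$ in its first variable applied to the canonical isomorphism $\OZ\otimes F_1\iso F_1$ (which identifies $\chi^{}_{\<\<f}(\OZ\otimes F_1,F_2)$ with $\chi^{}_{\<\<f}(F_1,F_2)$), a routine coherence fact.

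In practice I would assemble this as one large diagram whose outer boundary is the square of the Proposition: an upper region realizes the displayed identity expressing $v^{}_{\<\<f}(F_1)$ and $v^{}_{\<\<f}(F_1\otimes F_2)$ through $\chi^{}_{\<\<f}$ and $\bar v$ (commuting by~\eqref{def2v} together with bifunctoriality of $\otimes$), and the remaining region is the $\chi$-transitivity square just displayed. The only cell with genuine content is the transitivity of $\chi^{}_{\<\<f}$, which is quoted; everything else reduces to coherence of $\otimes$ and of the identification maps. The main thing to watch --- and where care is genuinely needed --- is the bookkeeping of the canonical identifications ($f^*\OZ\cong\OX$, $\OZ\otimes(-)\cong\id$, and the various associativity isomorphisms), so that the cancellation of $\bar v\otimes\id$ really is an instance of the interchange law, and so that the quoted transitivity of $\chi^{}_{\<\<f}$ is invoked in exactly the form carrying a unit-object argument.
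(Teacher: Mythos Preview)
Your approach is essentially the same as the paper's: unwind $v^{}_{\<\<f}$ via \eqref{def2v}, cancel the common isomorphism $\bar v\otimes\id$, and reduce to the transitivity square for $\chi^{}_{\<\<f}$ (with $F_0=\OZ$), which the paper writes out and then cites from \cite[Exercise 4.7.3.4(a)]{li}. One small point: the paper notes that \cite[4.7.3.4(a)]{li} treats only the \emph{proper} case, so one must first reduce via a compactification of $f$; you might add that step explicitly (also, \cite[7.9]{Nm3} in this paper's usage concerns compatibility of $\chi$ with base change rather than with $\otimes$-associativity, so that cross-reference is likely off).
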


\begin{proof}[Proof \textup{(Sketch)}] The definition of $v^{}_{\<\<f}$ makes it enough to prove commuta\-tivity of the next diagram (expressing transitivity of $\chi^{}_{\<\<f}$ for  any $F_0\in\Dqc(Z)$), and then to take $F_0\set\OZ$.\va1
\[
\CD
f^!(F_0\otimes\<F_1) \otimes f^*\<\<F_2  @<\,\chi^{}_{\<\<f}(\<F_0\<,\>\>F_1\<)\>\otimes\>\id<<
      f^!\<F_0\otimes\<f^*\<\<F_1\otimes\<f^*\<\< F_2\\
@V \chi^{}_{\<\<f}(\<F_0\>\otimes\> F_1\<,\>\>F_2\<)  VV  @VV\textup{natural} V\\
f^!(F_0\otimes F_1\otimes F_2) @<<\,\chi^{}_{\<\<f}(\<F_0,\>\>F_1\>\otimes\> F_2\<)\<\< < 
     f^!\<F_0\otimes  f^*(F_1\otimes\<F_2). \\[4pt]
 \endCD
\]
For this, one reduces easily, via a compactification of $f\<$, to the case where $f$~is proper, a case dealt with (in outline) in
\cite[Exercise 4.7.3.4(a)]{li}.
\end{proof}

\enlargethispage{-5pt}
\pagebreak[3]
\section{The fundamental class} \label{fundclass}

\begin{cosa}\label{setup}

Let $f\colon X\to Z$ be a flat $\sE$\kf-map. 
Set $Y\set X\times_{\<Z}X$, let $\delta\colon X\to Y$ be the diagonal map, and $p^{}_i\colon Y\to X\ (i=1,2)$ the projections onto the first and second factors, respectively, so that we have the diagram, with fiber square~$\clubsuit\>\>$,\looseness=-1
\begin{equation}\label{club}
\CD
X @>\delta >> Y @>p^{}_2 >> X \\
@. @V p^{}_1 VV @VVfV \\
@. X @>\lift4.8,\displaystyle\clubsuit,>\lift1.1,f,> Z
\endCD
\end{equation}
The maps $p_i$ are flat.

There are maps of $\Dqc$-functors 
\begin{equation}\label{delta->p}
\mu_i\colon\delta_*\to p_i^!\qquad(i=1,2)
\end{equation}
adjoint, respectively, to the natural maps $\id=(p_i\delta)^!\to \delta^!p_i^!$. Thus $\mu_i$ is the natural composite map
\[
\delta_*=\delta^{}_*(p^{}_i\delta)^!\lto \delta^{}_*\delta^!p_i^!\lto p_i^!\>.
\]

Associated to $\clubsuit$ is the functorial \emph{base-change isomorphism} (see \eqref{bch})
\[
\beta=\beta_{\clubsuit}\colon p_2^*f^!\<\<F\iso p_1^!f^*\<F\qquad(F\in\Dqc(Z)).
\]

\begin{defn}\label{def-fc}
With preceding notation, 
the \emph{fundamental class of} $\<f,$
\[
\fc{\<f}\colon\delta^*\delta^{}_*\>f^*\to f^!,
\]  
a map between functors from $\Dqc(Z)$ to $\Dqc(X)$, is  given
by the composite 
\begin{equation*}\label{cf}
\delta^*\delta^{}_*\>f^*
\xto[\via\mu^{}_1]{}
 \delta^*{p^!_1}\>f^*
\xto[\>\>\delta^*\<\<\beta^{-\<1}\>]{\lift-.1,\Iso,}
\delta^*p_2^*\>f^! 
\xto[\!\textup{natural}\!]{\lift-.1,\Iso,} f^!.
\end{equation*}
\end{defn}

\noindent \emph{Remarks.}\ (Not used elsewhere). It results from \cite[2.5 and~3.1]{AJL2} that \emph{the~fundamental class commutes with essentially \'etale localization on~$X\<$.} That is, if $g\colon X'\to X$ is essentially \'etale
then $\fc{\<f\<g}$ is obtained from~$\fc{\<f}$ by applying the functor $g^*$ and then making canonical identifications. 

See also \cite[Theorem 5.1]{AJL2} for the behavior of $\>\fc{\<f}$ under flat base change.

These results imply that if $u\colon U\hookrightarrow X$ and $v\colon V\hookrightarrow Z$ are  open immersions such that $f(U)\subset V$, and $f^{}_{\<0}\colon U\to V$ is the restriction of $f$, then 
$u^*(\fc{\<f})$ can be identified with $\fc{\<{f^{}_{\<0}}}$.  Locally, then, $\fc{\<f}$ reduces to the fundamental class of a flat $\sE$-map of affine schemes, in which case a simple explicit description is given in \cite[Theorem 4.2.4]{ILN}.

\end{cosa}

 The next Proposition expresses compatibility between $\fc{\<f}$
and (derived) tensor product.

\begin{prop}\label{c and tensor}
For any\/ $F_1,$ $F_2\in\Dqc(Z),$ the following diagram, 
where $\otimes\set\otimes_\sX,$  $\chi^{}_{\<\<f}$ is as in \eqref{chi} 
and\/ $\psi\set\psi(\delta,p_1,f^*\<\<F_1,f^*\<\<F_2),$
commutes. 
\[
\def\1{$\delta^*\<\delta_*f^*\<\<F_1\otimes f^*\<\<F_2$}
\def\2{$f^!\<F_1\otimes\< f^*\<\<F_2$}
\def\3{$\delta^*\<\delta_*(f^*\<\<F_1\otimes f^*\<\<F_2) $}
\def\4{$\delta^*\<\delta_*f^*(F_1\otimes F_2) $}
\def\5{$f^!(F_1\otimes F_2)$}
\bpic[xscale=5,yscale=1.5]
 
  \node(11) at (1,-1){\1};
  \node(12) at (2,-1){\2};
  
  \node(21) at (1,-2){\3};
  
  \node(31) at (1,-3){\4};
  \node(32) at (2,-3){\5};
  
   \draw[->] (11) -- (12) node[above, midway, scale=0.75]{$\fc{\<f}(F_1)\otimes\>\id$};
   
   \draw[->] (31) -- (32) node[below=1pt, midway, scale=0.75]{$\fc{\<f}(F_1\otimes F_2)$};

   
   \draw[->] (11) -- (21) node[left=1pt, midway, scale=0.75]{$\psi$};
   \draw[->] (21) -- (31) node[left=1pt, midway, scale=0.75]{$\eqref{* and otimes}$}; 
 
   \draw[->] (12) -- (32) node[right=1pt, midway, scale=0.75]{$\chi^{}_{\<\<f}(F_1\<,\>\>F_2)$};
   \epic
\]

\end{prop}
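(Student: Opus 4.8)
The plan is to split $\fc{\<f}$, following Definition~\ref{def-fc}, into its three successive stages---$\via\mu^{}_1$, then $\delta^*\<\<\beta^{-\<1}_\clubsuit$, then the canonical isomorphism $e\colon\delta^*p^*_2f^!\iso f^!$---and to prove for each separately a compatibility with $\otimes_\sX$. Granting these, the square in question becomes the outer boundary of a larger diagram whose interior subdivides into three ``strips,'' one per stage of $\fc{\<f}$, together with formal squares that commute by naturality, by associativity of $\otimes$, by the symmetric monoidality of the pseudofunctor $(-)^*$, by the definition \eqref{psi} of $\psi_1=\psi(\delta,p^{}_1,f^*\<\<F_1,f^*\<\<F_2)$, and by the identifications $fp^{}_1=fp^{}_2$ on $Y$ and $\delta^*p^*_i\iso\id$ on $\D(X)$. (These strips make sense because $p^{}_1$, being a base change of the flat map $f$, is flat, hence perfect, so $\chi^{}_{p^{}_1}$ is an isomorphism.)

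The crux is the first stage. On $Y=X\times_{\<Z}X$ I would establish, for $E,G\in\Dqc(X)$, the commutativity of
\[
\CD
\delta^{}_*E\otimes_Y p^*_1G @>\mu^{}_1\>\otimes\>\id>> p^!_1E\otimes_Y p^*_1 G\\
@V\eqref{projn}V\simeq V @VV\chi^{}_{p^{}_1}V\\
\delta^{}_*(E\otimes_\sX\delta^*p^*_1G)=\delta^{}_*(E\otimes_\sX G) @>\mu^{}_1>> p^!_1(E\otimes_\sX G)
\endCD
\]
the left-hand isomorphism being the projection isomorphism \eqref{projn} for the closed immersion $\delta$ composed with $\delta^*p^*_1\iso\id$. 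Writing $\mu^{}_1$ as $\delta^{}_*$ of the pseudofunctorial comparison $\id=(p^{}_1\delta)^!\iso\delta^!p^!_1$ (pseudofunctoriality of $(-)^!$ for flat maps being \cite[Proposition 7.10]{Nm3}) followed by the trace $\delta^{}_*\delta^!\to\id$ (which exists because $\delta$, being the diagonal of a separated map, is a closed immersion, hence proper), one obtains commutativity by combining two known facts: (i)~transitivity of $\chi$ along $\id=p^{}_1\delta$, namely \cite[7.8]{Nm3}, which exhibits $\chi^{}_{\id}=\id$ in terms of $\chi^{}_{\delta}$, $\delta^!\chi^{}_{p^{}_1}$ and that comparison; and (ii)~the description of $\chi^{}_{\delta}$ in \S\ref{maps}(f) (applicable since $\delta$ is proper), namely that $\chi^{}_{\delta}$ is adjoint to $\delta^{}_*(\delta^!F\otimes_\sX\delta^*G)\underset{\eqref{projn}}\iso\delta^{}_*\delta^!F\otimes_Y G\to F\otimes_Y G$, which records how $\chi^{}_{\delta}$ meets that trace. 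Applying $\delta^*$ to the displayed square and splicing in the definition of $\psi_1$ produces the first strip.

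The second strip, for $\delta^*\<\<\beta^{-\<1}_\clubsuit$, is precisely the compatibility of the base-change isomorphism $\beta^{}_\clubsuit$ with $\chi$, i.e.\ \cite[7.9]{Nm3} applied to $\clubsuit$---the very fact already used for subdiagram \circled6 in the proof of Proposition~\ref{def v}---together with the elementary identities $p^*_1f^*=p^*_2f^*=(fp^{}_1)^*$. The third strip, for $e$, says that the coherence isomorphism $\delta^*p^*_2\iso(p^{}_2\delta)^*=\id$ of the symmetric monoidal pseudofunctor $(-)^*$ is compatible with the maps \eqref{* and otimes} (and with $\chi^{}_{\<\<f}$ by naturality); this is routine, in the spirit of \cite[3.6.10]{li} and \cite[3.4.6.2]{li} as used elsewhere in this paper. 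Stacking the three strips and the intervening formal squares gives the asserted boundary square.

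The main obstacle is the first stage: carrying out, in the proof of the displayed square, the simultaneous bookkeeping of how $\chi$ interacts with the pseudofunctorial structure of $(-)^!$ and with the trace $\delta^{}_*\delta^!\to\id$. Once that square is available, the other two strips are a citation and a formal coherence argument, and the final assembly is mechanical.
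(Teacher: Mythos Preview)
Your approach is essentially the same as the paper's. You decompose $\fc{\<f}$ into its three stages and prove a tensor-compatibility strip for each, citing exactly the same ingredients the paper uses: for the $\mu^{}_1$-strip, the definition of $\chi^{}_{\delta}$ for the proper map $\delta$ together with transitivity of $\chi$ along $p^{}_1\delta=\id$ \cite[7.8]{Nm3} (the paper's subdiagrams \circled7 and \circled8 inside \circled3); for the $\beta^{-1}_\clubsuit$-strip, \cite[7.9]{Nm3} (the paper's \circled4); and for the final identification $\delta^*p^*_2\iso\id$, the monoidal coherence of $(-)^*$ (the paper's \circled2), with the remaining formal squares handled as in the paper's \circled1 and unlabeled subdiagrams.
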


\begin{proof}
The diagram expands, naturally, as follows, where $\alpha$ is induced by the composite $p_2^*f^*\iso(fp_2)^*=(fp_1)^*\iso p_1^*f^*$ and by the base-change 
isomorphism $\beta$.

\[\mkern-5mu
\def\1{$\delta^*\<\delta_*f^*\<\<F_1\otimes f^*\<\<F_2$}
\def\2{$\delta^*p_1^!f^*\<\<F_1\otimes f^*\<\<F_2$}
\def\3{$\delta^*p_2^*f^!\<F_1\otimes f^*\<\<F_2$}
\def\4{$f^!\<F_1\otimes\< f^*\<\<F_2$}
\def\5{$\delta^*(\delta_*f^*\<\<F_1\otimes p_1^*f^*\<\<F_2)$}
\def\6{$\delta^*(p_1^!f^*\<\<F_1\otimes p_1^*f^*\<\<F_2)$}
\def\7{$\delta^*(p_2^*f^!\<F_1\otimes p_2^*f^*\<\<F_2)$}
\def\8{$\delta^*\<\delta_*(f^*\<\<F_1\otimes \delta^*p_1^*f^*\<\<F_2)$}
\def\9{$\delta^*p_2^*(f^!\<F_1\otimes f^*\<\<F_2)$}
\def\ten{$\delta^*\<\delta_*f^*(F_1\otimes F_2)$}
\def\lvn{$\delta^*p_1^!f^*(F_1\otimes F_2)$}
\def\twv{$\delta^*p_2^*f^!(F_1\otimes F_2)$}
\def\thn{$f^!(F_1\otimes F_2)$}
\def\frn{$\delta^*p_1^!(f^*\<\<F_1\otimes f^*\<\<F_2)$}
\def\ffn{$\delta^*\<\delta_*(f^*\<\<F_1\otimes f^*\<\<F_2)$}
\bpic[scale=.89, xscale=3.65,yscale=1.45]
 
  \node(11) at (1,-1)[scale=.93]{\1};
  \node(12) at (2,-1)[scale=.93]{\2};
  \node(13) at (3,-1)[scale=.93]{\3};
  \node(14) at (4,-1)[scale=.93]{\4};
  
  \node(21) at (1,-3)[scale=.93]{\5};
  \node(22) at (2,-2)[scale=.93]{\6};
  \node(23) at (3,-3)[scale=.93]{\7};

  \node(31) at (1,-4)[scale=.93]{\8};
  \node(33) at (3,-4)[scale=.93]{\9};
  \node(34) at (4,-4)[scale=.93]{\4};
  
  \node(30) at (2,-4.5)[scale=.93]{\frn};
  \node(32) at (1,-5.5)[scale=.93]{\ffn};
  
  \node(41) at (1,-6.5)[scale=.93]{\ten};
  \node(42) at (2,-6.5)[scale=.93]{\lvn}; 
  \node(43) at (3,-6.5)[scale=.93]{\twv};
  \node(44) at (4,-6.5)[scale=.93]{\thn};

   \draw[->] (11) -- (12) ;
   \draw[->] (12) -- (13) ;
   \draw[->] (13) -- (14) ;
   
   \draw[->] (21) -- (22) ;
   \draw[<-] (22) -- (23) node[below=-.5pt, midway, scale=0.75]{$\alpha\mkern15mu$};
     
;
  
  \draw[->] (33) -- (34) ;
   
   \draw[->] (41) -- (42) ;
   \draw[->] (42) -- (43) ;
   \draw[->] (43) -- (44) ;

   \draw[->] (11) -- (21) ;   
   \draw[->] (21) -- (31) ; 
   \draw[->] (31) -- (32) ;
   \draw[->] (32) -- (41) ;
   
   \draw[->] (12) -- (22) ;   
   \draw[->] (22) -- (30) node[right=1pt, midway, scale=0.75]{$\delta^*\chi^{}_{\>p^{}_1}$}; 
   \draw[->] (30) -- (42) ;
   
    \draw[->] (32) -- (30) ;
    
   \draw[->] (13) -- (23) ;
   \draw[->] (23) -- (33) ;   
   \draw[->] (33) -- (43) node[left=.5pt, midway, scale=0.75]{$\delta^*\<p_2^*\chi^{}_{\<\<f}$};
     
   \draw[double distance=2pt] (14) -- (34) ;   
   \draw[->] (34) -- (44) node[right=1pt, midway, scale=0.75]{$\via\chi^{}_{\<\<f}$}; 

   \node at (2.7,-1.95) {\circled1};
   \node at (3.51,-2.3) {\circled2};
   \node at (2.5,-5.4) {\circled4};
   \node at (1.6,-3.5) {\circled3};

  \epic
\]

Commutativity of \circled1  follows from that of the next diagram of natural isomorphisms, 
the commutativity of whose subdiagrams is either obvious or included in 
the pseudofunctoriality of $(-)^*$:

\[
\def\1{$f^*=(p_1\delta)^*f^*$}
\def\2{$(p_2\delta)^*f^*=f^*$}
\def\3{$(fp_1\delta)^*$}
\def\4{$(fp_2\delta)^*$}
\def\5{$\delta^*(fp_1)^*$}
\def\6{$\delta^*(fp_2)^*$}
\def\7{$\delta^*p_1^*f^*$}
\def\8{$\delta^*p_2^*f^*$} 
\bpic[xscale=2.5,yscale=1.25]
 
  \node(11) at (1,-1){\1};
  \node(14) at (4,-1){\2};
  
  \node(22) at (2,-1.9){\3};
  \node(23) at (3,-1.9){\4};

  \node(32) at (2,-3.1){\5};
  \node(33) at (3,-3.1){\6};

  \node(41) at (1,-4){\7};
  \node(44) at (4,-4){\8};

   \draw[double distance=2pt] (11) -- (14) ;
   
   \draw[double distance=2pt] (22) -- (23) ;  
   \draw[double distance=2pt] (32) -- (33) ;    
   \draw[double distance=2pt] (41) -- (44) ;

   
   \draw[->] (11) -- (41) ;
   
   \draw[->] (22) -- (32) ; 
 
   \draw[->] (23) -- (33) ;
   
   \draw[->] (14) -- (44) ;
   
 
   \draw[->] (11) -- (22) ;
   \draw[->] (14) -- (23) ;   
   \draw[->] (32) -- (41) ; 
   \draw[->] (33) -- (44) ;

  \epic
\]

Subdiagram  \circled2 expands, naturally, as follows, with $E_1\set\! f^!\<F_1$, $E_2\set \!f^*\<\<F_2$ and~$p\set \<p_2$.

\[
\def\1{$\delta^*\<p^*\<\<E_1\otimes E_2$}
\def\2{$(p\delta)^{\<*}\<E_1\otimes E_2$}
\def\3{$E_1\otimes E_2$}
\def\4{$\delta^*\<p^*\<\<E_1\otimes (p\delta)^{\<*}\<E_2$}
\def\5{$(p\delta)^{\<*}\<E_1\otimes (p\delta)^{\<*}\<E_2$}
\def\6{$(p\delta)^{\<*}(E_1\otimes E_2)$}
\def\7{$\delta^*\<p^*\<\<E_1\otimes \delta^*\<p^*\<\<E_2$}
\def\8{$\delta^*\<(p^*\<\<E_1\otimes p^*\<\<E_2)$}
\def\9{$\delta^*\<p^*(E_1\otimes E_2)$}
 \bpic[xscale=4,yscale=1.5]
 
  \node(11) at (1,-1){\1};
  \node(12) at (2.02,-1){\2};
  \node(13) at (3,-1){\3};
  
  \node(21) at (1,-2){\4};
  \node(22) at (2.02,-2){\5};
  \node(23) at (3,-2){\6};

  \node(31) at (1,-3){\7};
  \node(32) at (2.02,-3){\8};
  \node(33) at (3,-3){\9};

   \draw[->] (11) -- (12) ;
   \draw[double distance=2pt] (12) -- (13) ;
      
   \draw[->] (21) -- (22) ;
   \draw[->] (22) -- (23) node[below=1pt, midway, scale=0.75]{$\gamma$};
  
   \draw[->] (31) -- (32) ;
   \draw[->] (32) -- (33) ;

   \draw[double distance=2pt] (11) -- (21) ;   
   \draw[->] (21) -- (31) ; 
     
   \draw[double distance=2pt] (12) -- (22) ;   
      
   \draw[double distance=2pt]  (13) -- (23) ;
   \draw[->] (23) -- (33) ;   
     
   \draw[double distance=2pt] (22) -- (13);
   \draw[->] (31) -- (22) ;
   
   \node at (2.7,-1.65) {\circled5};
   \node at (2,-2.5) {\circled6};
   
  \epic
\]
Commutativity of \circled6 is given by the dual of the commutative diagram \cite[3.6.7.2]{li} (see proof of \cite[3.6.10]{li}). 
As $p\delta$ is the identity map of~$X,$ the same diagram, with $g=f=\id$, yields that  the isomorphism $\gamma$ is idempotent, whence it is the identity map, so that \circled5 commutes.\va2

Subdiagram \circled3 without $\delta^*$ expands, naturally, to the following, 
with \mbox{$E_1\set f^*\<\<F_1,$} $E_2\set f^*\<\<F_2\>,$ and $p\set p_1\>$:
\[\mkern-5mu
\def\1{$\delta_*\<\<E_1\<\<\otimes\< p^*\<\<E_2$}
\def\2{$\delta_*\delta^!p^!\<E_1\<\<\otimes\< p^*\<\<E_2$}
\def\3{$p^!\<E_1\<\<\otimes\< p^*\<\<E_2$}
\def\4{$\delta_*(\delta^!p^!\<E_1\<\<\otimes\< \delta^*\<p^*\<\<E_2)$}
\def\5{$\delta_*\delta^!(p^!\<E_1\<\<\otimes\< p^*\<\<E_2)$}
\def\6{$\mkern5mu\delta_*\<\big((p\delta)^!\<E_1\<\<\otimes\< (p\delta)^*\<\<E_2\<\big)$}
\def\7{$\delta_*(E_1\<\<\otimes\< \delta^*\<p^*\<\<E_2)$}
\def\8{$\delta_*(E_1\<\<\otimes\< E_2)$}
\def\9{$\delta_*\delta^!p^!(E_1\<\<\otimes\< E_2)$}
\def\ten{$p^!(E_1\<\<\otimes\< E_2)$}
\bpic[, xscale=3.38,yscale=1.45]
 
  \node(11) at (1,-1)[scale=.93]{\1};
  \node(12) at (1.97,-1)[scale=.93]{\2};
  \node(14) at (4,-1)[scale=.93]{\3};
  
  \node(22) at (1.97,-1.9)[scale=.93]{\4};
  \node(23) at (3.11,-1.9)[scale=.93]{\5};

  \node(32) at (1.97,-3)[scale=.93]{\6};
  
  \node(41) at (1,-3)[scale=.93]{\7};
  \node(42) at (1,-4)[scale=.93]{\8}; 
  \node(43) at (3.11,-4)[scale=.93]{\9};
  \node(44) at (4,-4)[scale=.93]{\ten};

   \draw[->] (11) -- (12) ;
   \draw[->] (12) -- (14) ;
   
   \draw[->] (22) -- (23) node[below=.5pt, midway, scale=0.75]{$\delta_*\chi^{}_{\delta}$};
     
   \draw[->] (41) -- (42) ;
   \draw[->] (42) -- (43) ;
   \draw[->] (43) -- (44) ;

   \draw[->] (11) -- (41) ;   
   
   \draw[->] (12) -- (22) ;
   \draw[->] (22) -- (32) ; 
   \draw[double distance = 2pt] (32) -- (42);
   
   \draw[->] (23) -- (43) node[right=.5pt, midway, scale=0.75]{$\delta_*\delta^!\chi^{}_{p}$};    
   
   \draw[->] (14) -- (44) node[right=.5pt, midway, scale=0.75]{$\chi^{}_{p}$};

   \draw[->] (41) -- (22) ;
   \draw[->] (23) -- (14) ;
 
   \node at (2.75,-1.45) {\circled7};
   \node at (2.75,-3.05) {\circled8};

 \epic
\]

Commutativity of subdiagram \circled7 is immediate from the definition of $\chi^{}_{\delta}$
($\delta$ being proper).

In \cite{Nm14} there is a blanket convention that the functors and
natural transformations strictly respect identities, hence
$(p\delta)^*=\id^*=\id=\id^!=(p\delta)^!$. The commutativity of
 \circled8 follows from \cite[Theorem 13.4]{Nm14}.

Commutativity of subdiagram \circled4 is given by \cite[Proposition 13.7]{Nm14}.\va2

Commutativity of all the remaining (unlabeled) subdiagrams is clear.
\end{proof}

\begin{cosa}\label{equidim} Let $f\colon X\to Z$ be a flat $\sE$\kf-map. The map $f$ is \emph{equidimensional of relative dimension}~$d\>$
if for each $x\in X$ that is a generic point of an irreducible component of the fiber $f^{-\<1}\<f(x)$, the transcendence degree of the residue field of the local ring 
$\CO_{\<\<X\<,\>x}$ over that of $\CO_{\<Z\<,\>f(x)}$ is $d$. When $f$ is of finite type, this just means
that every irreducible  component of every fiber has dimension $d$. 

An essentially smooth $\sE$\kf-map of constant relative dimension~$d$ is equi\-dimensional
of relative dimension $d$.

\emph{For any equidimensional such\/ $f\<$, of relative dimension\/~$d$, it holds that\/ $H^e\<f^!\OZ=0$ 
whenever} $\>e<\<\<-d$. Indeed, this assertion is local on
$Z$ and~$X$ (see the remark right after \eqref{bciso}). One gets then from \cite[13.3.1]{EGA4} that $f$ may be assumed to be of the form $\spec B\to\spec A$ where $B$ is a localization of
a module-finite quasi-finite algebra $B_0$ over the polynomial ring $A[T_1,\dots,T_d\>]$. By
Zariski's Main Theorem \cite[8.12.6]{EGA4} the map $\spec B_0\to\spec A[T_1,\dots,T_d\>]$
factors as finite$\,\smallcirc$(open immersion). The isomorphism \eqref{vf} shows that the relative dualizing complex for the map $\spec A[T_1,\dots,T_d\>]\to\spec A$ is concentrated in degree~$-d$, so \eqref{d_f} implies that the assertion holds for any map $\spec B_1\to\spec A$ with $B_1$ finite over $A[T_1,\dots,T_d\>]$, and then
by (i) in~\ref{motive}, it holds for $\spec B\to\spec A$.\looseness=-1

So there is a canonical composite map \looseness=-1
\begin{equation}\label{omegaO to f^!}
\Omega^d_f[d\>]\xto[\eqref{fund1}]{} (H^{-d}\delta^*\<\delta_*\OX)[d\>]\xto[H^{-d}\>\fc{\<f}]{} (H^{-d}f^!\OZ)[d\>]\lto f^!\OZ\>,
\end{equation}
whence a canonical map
\[
c^{}_{\<f}\colon\Omega^d_f[d\>]\otimes f^*\lto f^!\OZ\otimes f^*\underset{\eqref{chi}} \iso \>f^!.
\]

For  essentially smooth $f$ there is then an obvious question---the one that motivated the present paper, and to which the answer is affirmative:

\begin{thm} \label{v and c}
For essentially smooth\/ $\sE$-maps\/ $f\colon X\to Z$ of relative dimen\-sion~$d,$ and\/ $F\in\Dqc(Z),$ the fundamental class map\/ $c^{}_{\<f}(F)$
is inverse to Verdier's  isomorphism\/ $v^{}_{\!f}(F)$.
 \end{thm}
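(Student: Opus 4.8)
The plan is to reduce the theorem to the case $F = \OZ$ and then to a purely local, ``master diagram'' verification. First I would observe that both $c^{}_{\<f}(F)$ and $v^{}_{\!f}(F)$ are, by their definitions \eqref{def2v} and the construction following \eqref{omegaO to f^!}, built from the case $F=\OZ$ by tensoring with $f^*\<\<F$ and composing with $\chi^{}_{\<\<f}$; the compatibility of $v^{}_{\!f}$ with tensor product is exactly Proposition~\ref{v and otimes}, and the analogous compatibility for $\fc{\<f}$ (hence for $c^{}_{\<f}$) is Proposition~\ref{c and tensor}. So it suffices to show that $c^{}_{\<f}(\OZ)\colon \Omega^{\>d}_f[d\>]\to f^!\OZ$ is inverse to $v^{}_{\!f}(\OZ)\colon f^!\OZ\iso\Omega^{\>d}_f[d\>]$. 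Since $f^!\OZ$ has homology concentrated in degree $-d$ (Section~\ref{equidim}) and $\Omega^{\>d}_f[d\>]$ likewise, it is enough to check this after applying $H^{-d}$, i.e.\ to check that $H^{-d}(v^{}_{\!f}(\OZ))$ and $H^{-d}(c^{}_{\<f}(\OZ))$ are mutually inverse isomorphisms of the invertible $\OX$-module-in-degree-$(-d)$ objects, and this can be checked locally on $X$.

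Next I would unwind both maps into diagrams built from the same elementary ingredients. Both $v^{}_{\!f}(\OZ)$ (via \eqref{defv}, \eqref{def-fund2}, \eqref{fund3}, \eqref{homologize}) and $\fc{\<f}$ (via Definition~\ref{def-fc}, through $\mu_1$ and $\beta_\clubsuit$) are assembled from: the fundamental-local-isomorphism map \eqref{fund1}/\eqref{fund1'}, the duality isomorphism \eqref{d_f} for the finite map $\delta$, the sheafified-Hom maps \eqref{f^*Hom} and \eqref{sheafadj}, the degree-shift sign from \eqref{Hom-shift} and \eqref{H and Hom}, the maps $\chi$ and $\beta$ of Section~\ref{maps}, and the maps $\mu_i\colon\delta_*\to p_i^!$. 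The core of the proof is then a single large commutative diagram whose outer border reads ``$v^{}_{\!f}(\OZ)\circ c^{}_{\<f}(\OZ)=\mathrm{id}$ on $H^{-d}$,'' and whose interior is tiled by: (a) the definition of $\gamma$ and the fundamental-local-isomorphism machinery of \S\ref{fundloc}; (b) the factorization of $v^{}_{\!f,\id}(\OZ)$ already proved in Proposition~\ref{def v}, so that one may work with the convenient form \eqref{def2v}; (c) the adjunction triangle identities relating $\mu_1$, the counit of $\delta_*\dashv\delta^!$, and the evaluation map \eqref{eval}; (d) compatibility of $\beta_\clubsuit$ with $\chi$ (e.g.\ \cite[7.9]{Nm3}, already used in Proposition~\ref{c and tensor}); and (e) the Kelly--Mac\,Lane-type transitivity of $\kappa$ from Lemma~\ref{trans} together with Lemma~\ref{projassoc} and the $\psi$-lemmas of \S\ref{prelims} to handle the various tensor reassociations. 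The sign factors $(-1)^{d(d+1)/2}$ that enter through \eqref{Hom-shift} in the definition of $\gamma$ and through the second isomorphism in \eqref{H and Hom} must appear an even number of times (or cancel against each other) around the border; checking that they do is one of the delicate bookkeeping points, and this is precisely why the choices described in Remark~(b) of the Introduction were made.

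I expect the main obstacle to be the careful identification, at the level of $H^{-d}$, of the composite $H^{-d}(\fc{\<f})\circ\eqref{fund1}$ appearing in \eqref{omegaO to f^!} with the \emph{inverse} of the composite in \eqref{homologize} that defines \eqref{fund3} (hence, after the shift sign \eqref{Hom-shift}, $\gamma$). Concretely: $\fc{\<f}$ is defined through $\mu_1$ and $\beta$, whereas \eqref{fund3} is defined through the duality isomorphism \eqref{d_f} for $\delta$ and the map \eqref{f^*Hom}; relating these requires chasing the adjunction $\delta_*\dashv\delta^!$ against $\delta_*\dashv\delta^\times=\delta^!$ (since $\delta$ is finite, hence proper) and recognizing that $\mu_1$, after applying $\delta_*$ and using \eqref{d_f}, becomes the ``multiply by the canonical section of $\sHom_Y(\delta_*\OX,\OW)$'' map dual to $s\colon\OW\to\delta_*\OX$ — which is exactly the transpose of the Koszul-resolution map underlying \eqref{fund1'}. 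Making this matching precise, locally on $X$ via a regular sequence $\mathbf t$ generating the diagonal ideal $I$ and an explicit Koszul resolution $K(\mathbf t)$ of $\delta_*\OX$, is the technical heart; once it is done, both $v^{}_{\!f}(\OZ)$ and $c^{}_{\<f}(\OZ)$ are expressed on $H^{-d}$ as the same explicit isomorphism $\Omega^{\>d}_f\cong (I/I^2)^{\otimes\text{top}}\cong\cdots$ read in opposite directions, and the theorem follows. A secondary, purely categorical obstacle is assembling all the coherence diagrams (for $\otimes$, $\sHom$, $\chi$, $\beta$, $\mu$) into one consistent master diagram without sign or orientation errors; the groundwork laid in Lemmas~\ref{projassoc}, \ref{trans}, the $\psi$-lemmas, and Propositions~\ref{def v}, \ref{v and otimes}, \ref{c and tensor} is meant precisely to reduce this to a manageable, if lengthy, diagram chase.
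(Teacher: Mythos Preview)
Your reduction to $F=\OZ$ is correct in spirit, though slightly over-engineered: $c^{}_{\<f}(F)$ is \emph{by definition} $\chi^{}_{\<\<f}\circ\big(\eqref{omegaO to f^!}\otimes\id_{f^*\<\<F}\big)$, so Proposition~\ref{c and tensor} is not needed for this step; the $\chi^{}_{\<\<f}$ in $c^{}_{\<f}$ cancels directly against the $\chi_{\<\<f}^{-1}$ in the definition \eqref{def2v} of $v^{}_{\!f}$, and the paper's diagram~\eqref{cfalt} handles the residual $\OZ$-unit bookkeeping.

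Where your approach genuinely diverges from the paper is in the treatment of the $F=\OZ$ case. You propose to pass to $H^{-d}$, localize on~$X$, choose a regular sequence $\mathbf t$ generating the diagonal ideal, and compute both sides explicitly via the Koszul resolution $K(\mathbf t)$. The paper does \emph{not} localize and does \emph{not} do explicit Koszul computations at this stage. Instead it stays entirely within the categorical framework: it introduces an auxiliary map $\rho=\rho_p\colon\delta^!\OY\to(\delta^*\delta_*\OX)'$ built from a section $\tau_p$ of the counit $\delta^*\delta_*\to\id$ (depending on a choice of projection $p=p_1$ or $p_2$), and proves a key lemma (Lemma~\ref{altrho}) that the map~$\gamma$ of~\eqref{gamma} factors as $\delta^!\OY\xto{\rho}(\delta^*\delta_*\OX)'\to\omega'$. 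This recasts $\gamma$ in a form directly comparable to the ingredients of $\fc{\<f}$, and the remaining work is a global diagram chase (subdiagrams \circled3--\circled8, with \circled7 the heart, subdivided down to \circled7$_{44}$). The sign factors $(-1)^{d(d+1)/2}$ cancel inside the proof of Lemma~\ref{altrho} itself.

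Your local-Koszul route could in principle be made to work, but you would face a real difficulty you have not addressed: the base-change isomorphism $\beta_\clubsuit$ entering $\fc{\<f}$ is defined abstractly, and making it explicit in terms of a Koszul resolution of $\delta_*\OX$ is not straightforward. The paper's introduction of $\rho$ and $\tau_p$ is precisely the device that lets one compare $\mu_1$ and the duality data for $\delta$ \emph{without} ever making $\beta$ explicit---this is the idea you are missing, and it is what makes the argument go through globally rather than pointwise.
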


\begin{proof} As in Section~\ref{setup}.
set $Y\set X\times_{\<Z}X$, let $\delta\colon X\to Y$ be the diagonal map, and $p^{}_i\colon Y\to X\ (i=1,2)$ the projections onto the first and second factors, respectively, so that we have the diagram, with fiber square~$\clubsuit\>\>$,\looseness=-1
\begin{equation}\label{club2}
\CD
X @>\delta >> Y @>p^{}_2 >> X \\
@. @V p^{}_1 VV @VVfV \\
@. X @>\lift4.8,\displaystyle\clubsuit,>\lift1.1,f,> Z
\endCD
\end{equation}
Resolving $\delta_*\OX$ locally by a Koszul complex, one sees that $H^e\delta^*\<\<\delta_*\OX=0$ if $e<-d\>$; so 
one has natural maps\looseness=-1
\begin{equation}\label{o to delta}
\omega\set \Omega^d_f[d\>]\xto[\!\eqref{fund1}\<] {}\big(H^{-d}\delta^*\<\<\delta_*\OX\big)[d\>]\lto \delta^*\<\<\delta_*\OX,
\end{equation}
and  $c^{}_{\<f}$ is the composite 
\begin{equation*}
\omega\otimes f^*\xto[\!\eqref{o to delta}\>\otimes\>\id]{} \delta^*\<\delta_*\OX\otimes f^* \xto[\!\fc{\!f}(\OZ)\otimes\>\id\!]{}f^!\OZ\otimes f^* 
\xto[\eqref{chi}]{\chi^{}_{\<\<f}} f^!.
\end{equation*}
In view of Proposition~\ref{v and otimes}, the theorem says then that the composite
\begin{equation*}
\omega\otimes f^*\xto[\!\eqref{o to delta}\>\otimes\>\id]{} \delta^*\<\delta_*\OX\otimes f^* \xto[\!\fc{\!f}(\OZ)\otimes\>\id]{}f^!\OZ\otimes f^* \xto[\!v^{}_{\!f}(\OZ)\otimes\>\id]{} \omega\otimes f^*
\end{equation*}
is the identity map. \va2

\begin{small}\noindent
(Here we implicitly used commutativity of the  diagram of natural isomorphisms
\begin{equation}\label{cfalt}
\CD
f^*\OZ \otimes f^*@>\Iso>> \OX\otimes f^*\\
@V\simeq VV @VV\simeq V\\
f^*(\OX\otimes\id) @>\Iso>>f^*
\endCD
\end{equation}
which commutativity holds since, \emph{mutatis mutandis,} this diagram is 
\emph{dual} \mbox{\cite[3.4.5]{li}} to the commutative subdiagram~\circled2 in the proof of \cite[3.4.7(iii)]{li}.)\va2
\end{small}

It suffices therefore to prove Theorem~\ref{v and c} when $F=\OZ$.\va2

\goodbreak
Let $(-)'$ be the endofunctor $\sHom_X(-,\OX)$ of $\D(X)$. The ``mirror image" of the evaluation map~
$\ev(X,E,\OX)$ (see \eqref{eval}) is the natural composite 
\begin{equation}\label{evalt}
E\otimes E'\iso E'\otimes E\xto[\!\eqref{eval}\!]{} \OX\qquad(E\in\D(X)).
\end{equation}

Now, after unwinding of the definitions involved,
Theorem~\ref{v and c} for $F=\OZ$ states that the border of the next natural diagram  commutes.
(Going~around clockwise from the upper left corner\va{-1.5} to the bottom right one gives $c^{}_{\<f}(\OZ)$, while
going around counterclockwise gives $v_{\!f}(\OZ)^{-1}$.) 

In this diagram,   $\otimes\set\otimes_\sX$, $\psi^0_2\set \psi(\delta, p_2, \OX,\OX)$---the identity map of~$\delta^*\<\delta_*\OX$ (see~\ref{psiO=id}),
and $\psi^{}_2\set \psi(\delta, p_2, \OX,\delta^!p_1^!\OX)$. 
Commutativity of the unlabeled subdiagrams is straightforward to check. The problem is to show\va2 commutativity of~\circled1.\looseness=-1

\[\mkern-5mu
\def\1{$\delta^*\<\delta_*\OX\<\otimes\< \OX$}
\def\2{$\delta^*\<\delta_*\OX$}
\def\3{$\>\>\delta^*\<\delta_*\delta^!p_1^!\OX\mkern-3mu$}
\def\4{$\omega\<\otimes\< \OX$}
\def\5{$\delta^*\<\<\delta_*\OX\<\<\otimes\< \delta^!p_1^!\OX\quad$}
\def\6{$\delta^!p_1^!\OX\<\<\otimes\<\omega$}
\def\7{$\ \omega\<\otimes\< \delta^!\OY\<\otimes\< \delta^*p_1^!\OX$}
\def\8{$\omega\<\otimes\<\omega'\<\otimes\< \delta^*p_1^!\OX$}
\def\9{$\delta^*p_1^!\OX$}
\def\ten{$\delta^!p_2^*f^!\OZ\<\otimes\<\omega\ \ $}
\def\lvn{$\delta^*p_2^*f^!\OZ$}
\def\twv{$\delta^!\OY\<\otimes\< \delta^*p_2^*f^!\OZ\<\otimes\<\omega$}
\def\thn{$\ \omega\<\otimes\< \omega'\<\otimes\< \delta^*p_2^*f^!\OZ$}
\def\frn{$\delta^!\OY\<\otimes\< f^!\OZ\<\otimes\<\omega$}
\def\ffn{$\omega'\<\otimes\< f^!\OZ\<\otimes\<\omega$}
\def\sxn{$f^!\OZ$}
\def\svn{$\OX\otimes\omega$}
\def\egn{$\omega\<\otimes\< \omega'\<\otimes\<f^!\OZ$}
\def\ntn{$\omega\<\otimes\<\delta^!p_1^!\OX$}
\def\twy{$\omega\<\otimes\< \delta^!\OY\<\otimes\< \delta^*p_2^*f^!\OZ\ $}
\bpic[, xscale=3.3,yscale=1.4]
 
  \node(11) at (3.1,-1){\1}; 
  \node(12) at (1,-1){\svn};
  \node(13) at (4,-1){\2};
  \node(21) at (1.9,-1){\4};
    
  \node(14) at (4,-2){\3};
  \node(31) at (1,-2){\6};
  \node(22) at (1.9,-2){\ntn};
  \node(23) at (3.1,-2){\5};

  \node(32) at (1.9,-3){\7};
  \node(33) at (3.1,-3){\8}; 
  \node(34) at (4,-3){\9};
  \node(41) at (1,-3){\ten};  
 
  \node(42) at (1,-4){\twv};
  \node(425) at (1.9,-5){\twy};
  \node(43) at (3.1,-5){\thn};
  \node(44) at (4,-4){\lvn};
  
  \node(51) at (3.1,-6){\egn};
  \node(52) at (1,-6){\frn};
  \node(53) at (2.12,-6){\ffn};
  \node(54) at (4,-6){\sxn};

   \draw[double distance=2pt] (11) -- (13) node[above=1pt, midway, scale=0.75]{$\psi^0_2$};
   \draw[->] (13) -- (14) node[right=1pt, midway, scale=0.75]{$\simeq$};
   
   \draw[<-] (31) -- (22)  node[above=.5pt, midway, scale=0.75]{$\Iso$};
   \draw[->] (22) -- (23)  node[above, midway, scale=0.75]{$\eqref{o to delta}\:$};
   \draw[->] (3.5,-2) -- (3.69,-2) node[above, midway, scale=0.75]{$\psi^{}_2$};
   
   \draw[->] (32) -- (33) node[above=1pt, midway, scale=0.75]{$\<\eqref{gamma}$};
   \draw[->] (33) -- (34) node[above=1pt, midway, scale=0.75]{$\<\eqref{evalt}$};
   
   \draw[->] (41) -- (42) node[left=1pt, midway, scale=0.75]{$\simeq$}
                          node[right, midway, scale=0.75]{$\chi^{\<-1}_{\delta}$}; 
   \draw[->] (42) -- (425) node[above=-1pt, midway, scale=0.75]{$\mkern25mu\simeq$};
   \draw[->] (2.45,-5) -- (2.62,-5) node[below=2pt, midway, scale=0.75]{$\<\eqref{gamma}$};
   \draw[->] (43) -- (44) node[below=-3pt, midway, scale=0.75]{$\mkern60mu\eqref{evalt}$};
   
   \draw[->] (52) -- (53) node[above=.5pt, midway, scale=0.75]{$\Iso$}
                                    node[below=1pt, midway, scale=0.75]{\eqref{gamma}};
   \draw[->] (53) -- (51) node[above=.5pt, midway, scale=0.75]{$\Iso$};
   \draw[->] (51) -- (54) node[above=.5pt, midway, scale=0.75]{$\Iso$}
                                    node[below=1pt, midway, scale=0.75]{$\<\eqref{evalt}$};

   \draw[<-] (11) -- (21) node[above, midway, scale=0.75]{$\eqref{o to delta}$};  
   \draw[double distance=2pt] (12) -- (21) ; 
   
   \draw[->] (21) -- (22) node[left=1pt, midway, scale=0.75]{$\simeq$}; 
   \draw[->] (22) -- (32) node[left=1pt, midway, scale=0.75]{$\simeq$}
                              node[right, midway, scale=0.75]{$\eqref{chi}$};
    
   \draw[->] (31) -- (41) node[left=1pt, midway, scale=0.75]{$\simeq$}
                          node[right, midway, scale=0.75]{$\eqref{bch}$}; 
   \draw[<-] (31) -- (12) node[left=1pt, midway, scale=0.75]{$\simeq$}; 
   
   \draw[->] (42) -- (52) node[left, midway, scale=0.75]{$\simeq$};
   
   \draw[->] (33) -- (43) node[right=1pt, midway, scale=0.75]{$\simeq$}
                         node[left, midway, scale=0.75]{\eqref{bch}};
   \draw[->] (43) -- (51) node[right=1pt, midway, scale=0.75]{$\simeq$}; 
   \draw[->] (44) -- (54)node[right=1pt, midway, scale=0.75]{$\simeq$};
   
   \draw[->] (14) -- (34) ;
   \draw[->] (34) -- (44) node[right=1pt, midway, scale=0.75]{$\simeq$}
                         node[left, midway, scale=0.75]{\eqref{bch}};
   
   \draw[->] (11) -- (23) node[right=1pt, midway, scale=0.75]{$\simeq$};
   \draw[->] (32) -- (425) node[left=1pt, midway, scale=0.75]{$\simeq$}
                                        node[right, midway, scale=0.75]{\eqref{bch}};

   \node at (3.13,-2.52) {\circled1};

 \epic
\]

We'll need another description of the map~\eqref{gamma} (Lemma~\ref{altrho} below).

For this, begin by checking  that  the counit map $\delta^*\<\delta_*G\to G\ (G\in\D(X))$ has  right inverses 
$\tau_p(G)$, where $p\colon Y\to X$ is any map such that $p\delta=\id_\sX$ (e.g., $p=p_1$ or $\>p_2$), and $\tau_p(G)$ is the natural composite 
\begin{equation}\label{tau_p}
G\iso \delta^*\<p^*G\iso \delta^*\<p^*p_*\delta_*G\lto \delta^*\delta_*G,
\end{equation}
that is, the composite $G\iso \delta^*\<p^*G\xto{\delta^*\upsilon(G)} \delta^*\delta_*G$,
where
\begin{equation}\label{oops}
\upsilon\colon p^*\to\delta_*
\end{equation}
is the map adjoint to $\id\iso p_*\delta_*$.

Let $\rho_{\>0}$ be the natural composite (of isomorphisms, since $\delta_*\OX$ is perfect)
\[\delta^*\<\delta_{\<*}\delta^!\OY\xto[\eqref{d_f}]{} 
\delta^*\sHom^{}_Y\<(\delta_{\<*}\OX,\OY)\\
\xto[\eqref{f^*Hom}]{}(\delta^*\<\delta_{\<*}\OX)',
 \]
  and set\va{-1}
\begin{equation}\label{defrho}
\rho=\rho_p\set\rho_{\>0}\tau_p(\delta^!\OY)\colon\delta^!\OY\lto (\delta^*\<\delta_{\<*}\OX)'.
\end{equation} 

As noted just before \eqref{fund3}, $H^e(\delta^!\OY)=0$ for any $e\ne d$, whence the counit map $\delta^*\<\<\delta_*\delta^!\OY\to\delta^!\OY$ 
is taken to an isomorphism by the functor~$H^d$. The inverse of this isomorphism is 
$H^d\>\tau_p(\delta^!\OY)$ because $\tau_p(\delta^!\OY)$ is right-inverse to 
$\delta^*\<\delta_*\delta^!\OY\to \delta^!\OY$. 
It follows then from its description via \eqref{homologize} that~\eqref{def-fund2}, 
with $U=X$, $W=Y$ and $n=d$, is the natural composite\va3
\begin{equation}\label{H-tau-rho}
\begin{aligned}
\delta^!\OY&\cong\big(H^d(\delta^!\OY)\big)[-d\>]\\
&\qquad\qquad\xto{\!H^d(\rho)[-d\>]\>}
\big(H^d((\delta^*\<\delta_*\OX)')\big)[-d\>]
\!\iso\!
(\Omega^{\>d}_{\<f\>})'[-d\>],
\end{aligned}
\end{equation}
the last isomorphism arising via  \eqref{H and Hom} (with $n=d\>$) and \eqref{o to delta}.

\begin{sublem} \label{altrho}
For any\/ $\rho=\rho_p$ as in\/ \eqref{defrho}$,$ the map \textup{\eqref{gamma}} factors as
\[
\delta^!\OY\xto{\ \rho\ } (\delta^*\<\delta_*\OX)'\xto{\<\<\via\eqref{o to delta}\>\>}\omega'.
\]
\end{sublem}

\begin{proof}
For any complex $E\in\D(X)$ set $E^{{\sss \ge} d}\set t_{{\sss \ge} d}E$, with $t_{{\sss \ge} d}$
\va{.7}  the truncation functor
(denoted $\tau_{{\sss \ge} d}$ in \cite[\S\:10.1]{li}).\va{.4} The natural map 
$H^d(E)[-d\>]\to E^{{\sss \ge} d}$ is an isomorphism if $H^eE=0$ for all $e>d$, a condition\va{.2}
satisfied when $E=(\delta^*\delta_*\OX)'$ or $\omega'$. 

Accordingly, and in view of  the description of \eqref{gamma} via \eqref{def-fund2} = \eqref{H-tau-rho},
the lemma asserts that the border of the following diagram, whose top row is \eqref{H-tau-rho},
 commutes.
\[
\def\1{$(\delta^!\OY)^{{\sss \ge} d}$}
\def\2{$\big((\delta^*\<\delta_*\OX)'\big)^{{\sss \ge} d}$}
\def\3{$(\Omega^{\>d}_{\<f\>})'[-d\>]$}
\def\4{$\delta^!\OY$}
\def\5{$(\omega')^{{\sss \ge} d}$}
\def\6{$(\delta^*\<\delta_*\OX)'$}
\def\7{$\ \omega'\ $}
\bpic[ xscale=3.3,yscale=1.2]
 
  \node(11) at (1,-1){\1}; 
  \node(12) at (2,-1){\2};
  \node(13) at (3,-1){\3};
  
  \node(21) at (.24,-1){\4};
  \node(22) at (2,-2.1){\5};  
   
  \node(31) at (.24,-3){\6}; 
  \node(33) at (3,-3){\7};  
  
   \draw[->] (11) -- (12) ;
   \draw[->] (12) -- (13) ;
       
   \draw[->] (31) -- (33) node[below=1pt, midway, scale=0.75]{$\via\:$\kern-1pt\eqref{o to delta}};

   \draw[<-] (11) -- (21) ;
   \draw[->] (21) -- (31) node[left=1pt, midway, scale=0.75]{$\rho$}; 
       
   \draw[->] (12) -- (22) node[left, midway, scale=0.75]{\raisebox{2pt}{$\via$}}
                                    node[right, midway, scale=0.75]{\eqref{o to delta}}; 
   
   \draw[->] (13) -- (33) node[right, midway, scale=0.75]{\eqref{Hom-shift}};

   \draw[->] (31) -- (12) node[above=-4pt, midway, scale=0.75]{$$};
   \draw[double distance=2pt] (22) -- (33) node[right, midway, scale=0.75]{$$};
  
   \node at (2.58,-1.87) {\circled2};
  \epic
\]

The unlabeled subdiagrams clearly commute. Subdiagram \circled2 expands naturally as follows,
with $\sHom=\sHom_\sX$:
\[
\def\1{$\big((\delta^*\<\delta_*\OX)'\big)^{{\sss \ge} d}$}
\def\2{$H^{d}\big((\delta^*\<\delta_*\OX)'\big)[-d\>]$}
\def\3{$\big((H^{-d}\delta^*\<\delta_*\OX)'\big)[-d\>]$}
\def\4{$\big((H^{-d}\delta^*\<\delta_*\OX[d\>])'\big)^{{\sss \ge} d}$}
\def\5{$\big((\Omega^{\>d}_{\<f\>})'\big)[-d\>]$}
\def\6{$(\omega')^{{\sss \ge} d}\ $}
\def\7{$\ \omega'$}
\bpic[ xscale=4,yscale=1.6]
 
  \node(11) at (1,-1.2){\1}; 
  \node(12) at (1,-.3){\2};
  \node(13) at (3,-.3){\3};
  
  \node(21) at (1,-2.1){\4};
  \node(23) at (3,-1.2){\5};  
   
  \node(31) at (1,-3){\6}; 
  \node(33) at (3,-3){\7};  
  
   \draw[->] (11) -- (12) node[left=1pt, midway, scale=0.75]{$\simeq$} ;
   \draw[->] (12) -- (13) node[above=1pt, midway, scale=0.75]{\eqref{H and Hom}}; ;
       
   \draw[double distance = 2pt] (31) -- (33) ;
   %
   \draw[->] (11) -- (21)  ;  
   \draw[->] (21) -- (31) node[left=1pt, midway, scale=0.75]{$\via\:\kern-1pt\eqref{o to delta}$} ; 
       
   \draw[->] (13) -- (23) node[right=1pt, midway, scale=0.75]{$\textup{cf.}\:\kern-1pt\eqref{o to delta}$} ; 
   \draw[->] (23) -- (33) node[right=1pt, midway, scale=0.75]{\eqref{Hom-shift}}; 
 
   \epic
\]

The vertices of this diagram can all be identified with the complex~$G$ that is 
$H^0\sHom(\Omega^{\>d}_{\<f\>},\OX)$ in degree $d$ and 0 elsewhere. When this is done,
all the maps in the diagram are identity maps except for the two labeled~\eqref{H and Hom}
and~\eqref{Hom-shift}, which are both multiplication in $G$ by~$(-1)^{d(d+1)/2}$.  
(See the remarks following equations \eqref{H and Hom}
and~\eqref{Hom-shift}). Hence subdiagram~\circled2 commutes, and Lemma~\ref{altrho} is proved.
\end{proof}

\pagebreak[3]
One has now that subdiagram \circled1 without  $p_1^!\OX$  expands naturally as follows,
with $\chi$ as in \eqref{chi}, $\id$ the identity functor, and
\begin{align*}
\psi^{}_3&\set\psi(\delta,p_2, \OX,\delta^!\OY\otimes_Y \delta^*(-)),\\
\psi^{}_4&\set\psi(\delta,p_2, \delta^!\OY,\delta^*(-)),\\
\psi^{}_7&\set\psi(\delta,p_2, \OX,\delta^!\OY).\\
\end{align*}
\[\mkern-5mu
\def\1{$\omega\<\otimes\<\delta^!$}
\def\2{$\delta^*\<\delta_*\OX\<\otimes\<\delta^!$}
\def\3{$\delta^*\<\delta_*\delta^!$}
\def\4{$\omega\<\otimes\<\delta^!\OY\<\otimes\< \delta^*$}
\def\5{$\delta^*\<\delta_*\OX\<\otimes\<\delta^!\OY\< \otimes\< \delta^*$}
\def\6{$\omega\<\otimes\<(\delta^*\<\delta_*\OX\<)'\<\otimes\< \delta^*$}
\def\7{$\omega\<\otimes\<\omega'\<\<\otimes\< \delta^*$}
\def\8{$\OX\<\otimes\< \delta^*$}
\def\9{$\delta^*$}
\def\ten{$\mkern5mu\delta^*\<\delta_*\delta^!\OY\!\otimes\< \delta^*$}
\def\lvn{$\delta^*\OY\<\otimes\< \delta^*$}
\def\twv{$\delta^*\<\delta_*(\delta^!\OY\<\otimes_Y\< \delta^*)$}
\def\thn{$\delta^*(\delta_*\delta^!\OY\<\otimes_Y\< \id)$}
\def\frn{$\delta^*(\OY\<\otimes_Y\< \id)$}
\def\sxn{$\delta^*\<\delta_*\OX\<\<\otimes\<\<(\delta^*\<\delta_*\OX\<)'\!\otimes\<\< \delta^*$}
\bpic[ xscale=3.5,yscale=1.45]
 
  \node(11) at (1,.1){\1}; 
  \node(12) at (2.3,.1){\2};
  \node(14) at (4,.1){\3};
  
  \node(21) at (1,-1.24){\4};
  \node(22) at (2.3,-1.24){\5};  
  \node(23) at (3.45,-1.24){\twv};
   
  \node(31) at (1,-4.55){\6}; 
  \node(32) at (2.3,-2.75){\ten};
  \node(33) at (3.45,-2.75){\thn};  
 
  \node(41) at (1.67,-3.37){\sxn};
  \node(42) at (2.3,-4.2){\lvn};
  \node(43) at (3.45,-4.2){\frn};
  
  \node(51) at (1,-5.6){\7};
  \node(52) at (2.3,-5.6){\8};
  \node(54) at (4,-5.6){\9} ;
  
   \draw[->] (11) -- (12) node[above, midway, scale=0.75]{$\eqref{o to delta}$};
   \draw[->] (12) -- (14) node[above=1pt, midway, scale=0.75]{$\psi^{}_2$}; 
   
   \draw[->] (21) -- (22) node[above, midway, scale=0.75]{$\eqref{o to delta}$};
   \draw[->] (22) -- (23) node[above, midway, scale=0.75]{$\psi^{}_3$};
    
   \draw[->] (32) -- (33) node[below=1pt, midway, scale=0.75]{\eqref{* and otimes}};
   
   \draw[->] (42) -- (43) node[above=1pt, midway, scale=0.75]{\eqref{* and otimes}};
   
   \draw[->] (51) -- (52) node[below=1pt, midway, scale=0.75]{\eqref{evalt}};   
   \draw[->] (52) -- (54) node[above, midway]{$\Iso$};

   \draw[->] (11) -- (21) node[left, midway, scale=0.75]{$\chi^{\<-1}_{\delta}$};  
   \draw[->] (21) -- (31) node[left=1pt, midway, scale=0.75]{$\via\>\rho$}; 
   \draw[->] (31) -- (51) node[left=1pt, midway, scale=0.75]{$(\<\<\ref{o to delta}\<)$};
   
   \draw[->] (12) -- (22) node[left, midway, scale=0.75]{$\chi^{\<-1}_{\delta}$}; 
   \draw[->] (22) -- (32) node[right, midway, scale=0.75]{$\psi^{}_7\otimes\id$};
   \draw[->] (32) -- (42) ;
   \draw[->] (42) -- (52) ;
    
   \draw[->] (23) -- (33) node[right, midway, scale=0.75]{\eqref{projn}$^{-\<1}$}; 
   \draw[->] (33) -- (43) ; 
   
   \draw[->] (14) -- (54) ;

   \draw[<-] (23) -- (14) node[above=-4pt, midway, scale=0.75]{$\chi^{\<-1}_{\delta}\mkern35mu$};
   \draw[->] (43) -- (54) node[above=-7pt, midway]{\rotatebox{-48}{$\mkern20mu\Iso$}};
   \draw[->] (32) -- (23) node[right, midway, scale=0.75]{\kern9pt$\psi^{}_{\<4}$};
   \draw[->] (22) -- (41) node[above=1pt, midway, scale=0.75]{$\via\rho\kern20pt$}; 
   \draw[->] (31) -- (41) node[above=-3.5pt, midway, scale=0.75]{$(\<\<\ref{o to delta}\<)\mkern65mu$}; 
   \draw[->] (41) -- (52) node[left, midway, scale=0.75]{\eqref{evalt}\kern3pt};
   
   \node at (2.72,-1.77) {\circled3};
   \node at (2.90,-2.43) {\circled4};
   \node at (1.67,-4.95) {\circled6};
   \node at (3.73,-3.37) {\circled5};
   \node at (2.90,-4.95) {\circled8};
   \node at (1.98,-3.8) {\circled7};
 \epic
\]

Commutativity of the unlabeled subdiagrams is clear;

Commutativity of \circled3 is given by Lemma~\ref{transpsi}.

Commutativity of \circled5 (without $\delta^*$) is the definition of $\chi^{}_{\delta}=\chi^{}_{\delta}(\OY,-)$.

Commutativity\! of~\circled6 is given by  that of the first diagram \mbox{in~\cite[3.5.3(h)]{li},} 
with $(A,B,C)\set(\delta^*\<\delta_*\OX,\omega,\OX)$.

For commutativity of  \circled8, cf.~\eqref{cfalt}.

After restoring the term $p_1^!\OX\cong p_2^*f^!\OZ$ omitted above, 
one gets commutativity of \circled4 from Lemma~\ref{psi and iso}, with $E\set\delta^!\OY,$ 
 $F\set f^!\OZ$.

That leaves subdiagram \circled7, which without ``$\,\otimes\: \delta^*\,$" expands naturally as follows,
where $p\set p_2$ and $F''\set\sHom_Y(F,\OY)\ (F\in\D(Y))$: 
\[\mkern-5mu
\def\1{$\delta^*\<\delta_*\OX\<\otimes\<\delta^!\OY$}
\def\2{$\delta^*\<\delta_*\OX\<\otimes\<\delta^*\<\delta_*\delta^!\OY$}
\def\3{$\delta^*\<(\delta_*\OX\<\otimes\<p^*\<\delta^!\OY\<)$}
\def\4{$\delta^*\<\delta_*\OX\<\otimes\<\delta^*(\delta_*\OX)''$}
\def\5{$\delta^*\<\delta_*(\OX\<\otimes\<\delta^*\<p^*\<\delta^!\OY\<)\,$}
\def\6{$\delta^*\<\big(\delta_*\OX\<\otimes\<(\delta_*\OX)''\big)$}
\def\7{$\delta^*\<\delta_*\delta^!\OY$}
\def\8{$\delta^*\<\delta_*\OX\<\<\otimes\<\<(\delta^*\<\delta_*\OX\<)'$}
\def\ten{$\OX\<$}
\def\lvn{$\delta^*\OY$}
\def\thn{$\delta^*(\delta_*\OX\<\<\otimes\<p^*p_*\delta_*\delta^!\OY\<)$}
\def\frn{$\delta^*(\delta_*\OX\<\<\otimes\<\delta_*\delta^!\OY\<)$}
\def\ffn{$\delta^*\<\<\delta_*\delta^*\<p^*\delta^!\OY$}
\bpic[xscale=4.5,yscale=1.5]
 
  \node(11) at (1,-1){\1}; 
  \node(12) at (2.05,-1){\3};
  \node(13) at (3,-1){\5};
  
  \node(21) at (1,-2){\2}; 
  
  \node(22) at (2.05,-2){\frn} ;
  \node(43) at (3,-2){\ffn};

  \node(23) at (3,-3){\7};
  \node(31) at (1,-3){\4}; 
  \node(32) at (2.05,-3){\6};
 
  \node(33) at (3,-4){\lvn};
  \node(41) at (1,-4){\8};
  \node(42) at (2.05,-4){\ten};
   
   \draw[->] (11) -- (12) node[above=1pt, midway, scale=0.75]{$\;\eqref{* and otimes}$};
   \draw[->] (12) -- (13) node[above, midway, scale=0.75]{\eqref{projn}\kern2pt};
   
     \draw[->] (31) -- (32) node[below, midway, scale=0.75]{$\eqref{* and otimes}$};

   \draw[->] (41) -- (42) node[below, midway, scale=0.75]{\eqref{evalt}};
   \draw[->] (33) -- (42)  node[above=1pt, midway, scale=0.75]{$\Iso$};

   \draw[->] (11) -- (21) node[left, midway, scale=0.75]{$\via\tau_p(\delta^!\mkern-.5mu\OY\<\<)$};  
   \draw[->] (21) -- (31) node[left, midway, scale=0.75]{$\via\eqref{d_f}$}; 
   \draw[->] (31) -- (41) node[left, midway, scale=0.75]{$\via\eqref{f^*Hom}$};
   
   \draw[->] (12) -- (22) node[right, midway, scale=0.75]{$\via\eqref{oops}$} ;
  
   \draw[->] (13) -- (43) node[right, midway, scale=0.75]{$\simeq$} ;   
   \draw[->] (43) -- (23) node[right, midway, scale=0.75]{$\simeq$} ;  
      
    \draw[->] (32) -- (33) node[above=-3pt, midway, scale=0.75]{$\mkern85mu\textup{cf.}\>\eqref{evalt}$} ;
    \draw[->] (23) -- (33) ;
    \draw[->] (21) -- (22) node[below, midway, scale=0.75]{$\eqref{* and otimes}$};
    \draw[<-] (32) -- (22) node[right, midway, scale=0.75]{$\via\eqref{d_f}$};
    
   \node at (1.52,-2.6) {\circled7$_2$};
   \node at (1.51,-1.55) {\circled7$_1$};
   \node at (2.55,-2.5) {\circled7$_4$};
   \node at (2.05,-3.5) {\circled7$_3$};

 \epic
\]

Commutativity of subdiagram \circled7$_1$ results from the definition of $\tau_p$.  

Commutativity of subdiagram \circled7$_2$ is clear.

Commutativity of \circled7$_3$ follows from 
\cite[3.5.6(a)]{li} (with $f\set\delta$, $A\set\delta_*\OX$ and $B\set\OY)$.

Subdiagram \circled7$_4\>$, with the initial ``$\>\delta^*\>$" in each term dropped, 
expands naturally as follows:
\[
\def\1{$\delta_*\OX\<\otimes\<\delta_*\delta^!\OY$}
\def\3{$\delta_*\OX\<\otimes\<p^*\delta^!\OY$}
\def\4{$\delta_*(\OX\<\otimes\<\delta^*\<p^*\delta^!\OY)$}
\def\5{$\delta_*\delta^!\OY$}
\def\6{$\delta_*\delta^!\OY\<\otimes\<\delta_*\OX$}
\def\7{$\delta_*(\delta^!\OY\<\otimes\<\OX)$}
\def\8{$\OY$}
\def\9{$(\delta_*\OX)''\otimes\<\delta_*\OX$}
\def\ten{$\delta_*\OX\<\otimes\<(\delta_*\OX)''$}
\def\lvn{$\delta_*\OX\<\otimes\<\delta_*\delta^*\<\<\delta_*\delta^!\OY$}
\def\twv{$\delta_*(\OX\<\otimes\<\delta^*\<\<\delta_*\delta^!\OY)$}
\def\thn{$\delta_*(\OX\<\otimes\<\delta^!\OY)$}
\def\frn{$\delta_*\delta^*\<p^*\delta^!\OY$}
\bpic[xscale=5, yscale=1.5]
 
  \node(11) at (1,-1.8){\3}; 
  \node(12) at (3,-1.8){\4};
  \node(13) at (3,-5){\5};
 
  \node(22) at (1.625,-4){\lvn}; 
  \node(23) at (3,-4){\frn}; 
 
  \node(31) at (1,-3){\1};
  \node(32) at (2.25,-3){\twv} ;
 
  \node(41) at (1,-5){\1};
  \node(42) at (2.25,-5){\thn};

  \node(51) at (1.5,-6.1){\6};
  \node(52) at (2.5,-6.1){\7};
  
  \node(61) at (1,-7.2){\ten};
  \node(62) at (2,-7.2){\9};
  \node(63) at (3,-7.2){\8};

  \draw[->] (11) -- (12) node[above, midway, scale=0.75]{\eqref{projn}};
  
  \draw[->] (31) -- (32) node[above, midway, scale=0.75]{\eqref{projn}}; 
     
  \draw[->] (41) -- (42) node[above, midway, scale=0.75]{\eqref{symmon}};
  \draw[->] (42) -- (13) node[above=1pt, midway, scale=0.75]{$\Iso$};
    
  \draw[->] (51) -- (52) node[above, midway, scale=0.75]{\eqref{symmon}};
  
  \draw[->] (61) -- (62) node[above, midway, scale=0.75]{$\Iso$};
  \draw[->] (62) -- (63) node[below, midway, scale=0.75]{\eqref{eval}};

   
   \draw[double distance=2pt] (31) -- (41) ;
   \draw[->] (41) -- (61) node[left, midway, scale=0.75]{$\via\eqref{d_f}$};
   
   \draw[->] (22) -- (32) node[below=-2.7pt, midway,scale=0.75]{\kern40pt\eqref{symmon}};
       
   \draw[->] (12) -- (42) node[left=2pt, midway, scale=0.75]{$\simeq$} ;  
   
   \draw[->] (12) -- (23) node[right=1pt, midway, scale=0.75]{$\simeq$} ;
   \draw[->] (23) -- (13) node[right=1pt, midway, scale=0.75]{$\simeq$} ;

   \draw[->] (13) -- (63) ;
   
    \draw[->] (11) -- (31) node[right, midway, scale=0.75]{$\via\eqref{oops}$};
    \draw[->] (31) -- (22) ;
    \draw[<-] (41) -- (22) ;
    \draw[->] (12) -- (32) node[left, midway, scale=0.75]{$\via\eqref{oops}\mkern25mu$};
    \draw[->] (32) -- (42) ;
    \draw[->] (42) -- (52) ;
    \draw[->] (41) -- (51) ;
    \draw[->] (52) -- (13) ;
    \draw[->] (51) -- (62) node[above=-3pt, midway, scale=0.75]{$\zeta(\OX,\OY\<)\<\otimes\<\id\mkern185mu$}
                                  node[above=-3pt, midway, scale=0.75]{$\mkern85mu\textup{see}\,\eqref{sheaf dual}$};

   \node at (1.625,-3.45) {\circled7$_{41}$};
   \node at (2.41,-3.7) {\circled7$_{42}$};
   \node at (1.87,-5.6) {\circled7$_{43}$};
   \node at (2.53,-6.67) {\circled7$_{44}$};

 \epic
\]

Here, commutativity of the unlabeled subdiagrams is clear.

The commutativity of subdiagram \circled7$_{41}$ is given by the definition of the 
projection isomorphism~\eqref{projn}.

Commutativity of subdiagram \circled7$_{42}$ (which says, incidentally, that the map~$\upsilon$ in \eqref{oops} is adjoint to the natural isomorphism $\delta^*p^*\iso\id$) follows easily from   
$\tau_p$ in $\eqref{tau_p}$ being
right inverse to the counit map $\delta^*\delta_*G\to G$.

Subdiagram \circled7$_{43}$ commutes because $\delta_*$ is a \emph{symmetric} monoidal functor.
(See \S\ref{adjunctions}.)

 Finally, commutativity of subdiagram \circled7$_{44}$ means that the map $\zeta(\OX,\OY\<)$
is adjoint to the natural composite 
 \begin{equation}\label{adjzeta}
 \delta_*\delta^!\OY\otimes_Y\delta_*\OX\xto{\eqref{symmon}}\delta_*(\delta^!\OY\otimes_\sX\OX)
 = \delta_*\delta^!\OY\lto\OY\>.
 \end{equation}
But for any $F\in\Dqc(Y)$, $\zeta(\OX,F\>)$ is, by definition, right-conjugate to the projection isomorphism 
\begin{equation}\label{conjproj}
\delta_*\delta^*\<G=\delta_*(\delta^*\<G\otimes_\sX\OX) \underset{\eqref{projn}}\osi G\otimes_Y\delta_*\OX,
\end{equation}
that is (see \cite[3.3.5]{li}),  $\zeta(\OX,F)$ is adjoint to the natural composite 
\[
 \delta_*\delta^!F\otimes_Y\delta_*\OX\underset{\eqref{conjproj}\<}{\iso}
  \delta_*\delta^*\<\<\delta_*\delta^!F
\lto \delta_*\delta^!F\lto F,
\]
which for $F\set\OY$ is the same as \eqref{adjzeta}, since for any $A$ and $B\in\D(X)$ 
(e.g., $A=\delta^!\OY$ and $B=\OX$), the following natural diagram
commutes (as follows easily from the definition of \eqref{projn}):
\[
\def\1{$\delta_*A\<\otimes_Y\<\delta_*B$}
\def\2{$\delta_*(A\<\otimes_\sX\<B)$}
\def\3{$\delta_*\delta^*\<\delta_*A\<\otimes_Y\<\delta_*B$}
\def\4{$\delta_*(\delta^*\delta_*\<A\<\otimes_\sX\<B)$}
\bpic[xscale=5, yscale=1.5]
 
  \node(11) at (1,-1){\1}; 
  \node(12) at (2,-1){\2};
  
  \node(21) at (1,-2){\3};  
  \node(22) at (2,-2){\4}; 
  \draw[->] (11) -- (12) node[above, midway, scale=0.75]{\eqref{symmon}};
  
  \draw[->] (21) -- (22) node[below, midway, scale=0.75]{\eqref{symmon}}; 
     
   
   \draw[->] (.98,-1.2) -- (.98,-1.8) ;
   \draw[<-] (1.02,-1.2) -- (1.02,-1.8) ;
   
   \draw[->] (22) -- (12) ;
          
    \draw[->] (11) -- (22) node[above, midway, scale=0.75]{$\mkern40mu\eqref{projn}$};
   
  \epic
\]
So subdiagram \circled7$_{44}$ commutes. 

Thus, \circled7 commutes, 
whence so does \circled1.
\end{proof}
\end{cosa}

\section{Acknowledgement}
We are grateful to Pramathanath Sastry for a number of comments from which this paper benefited.

\end{document}